\newtheorem{prob}{Problem}
\newtheorem{lem}{Lemma}[section]
\newtheorem{thm}{Theorem}[section]
\newtheorem{conj}{Conjucture}
\newtheorem{clm}{Claim}[section]
\theoremstyle{definition}
\begin{document}
	
\title{A generalization on spectral extrema of $K_{s,t}$-minor free graphs
\footnote{Supported by National Natural Science Foundation of China (Nos.  12061074, 12171154, 11971274).}}
\author{
{Yanting Zhang$^{a}$, Zhenzhen Lou$^{a,b}$\footnote{Corresponding author.
\it Email addresses:
xjdxlzz@163.com (Z. Lou).} }\\[2mm]
\small $^a$ School of Mathematics, East China University of Science and Technology,\\
\small Shanghai, 200237, China\\
\small $^b$ College of Mathematics and System Science,
Xinjiang University, Urumqi, 830046, China }
\date{}
\maketitle{\flushleft\large\bf Abstract}
The spectral extrema problems on forbidding minors have aroused wide attention.
Very recently, Zhai and Lin [J. Combin. Theory Ser. B 157 (2022) 184--215] determined the extremal graph with maximum adjacency spectral radius
among all $K_{s,t}$-minor free graphs of sufficiently large order.
The matrix $A_{\alpha}(G)$ is a generalization of the adjacency matrix $A(G)$, which is defined by Nikiforov \cite{Nikiforov2} as
$$A_{\alpha}(G) = \alpha D(G) + (1 - \alpha)A(G),$$
where $0\leq\alpha \leq1$.
Given a graph $F$, the $A_\alpha$-spectral extrema problem is to determine the maximum spectral radius of $A_{\alpha}(G)$ or characterize the extremal graph among all graphs with no
subgraph isomorphic to $F$.
For $\alpha=0$, the matrix $A_{\alpha}(G)$ is exactly the adjacency matrix $A(G)$.
Motivated by the nice work of Zhai and Lin, in this paper we
determine the extremal graph with maximum $A_\alpha$-spectral radius among all $K_{s,t}$-minor free graphs of sufficiently large order, where $0<\alpha<1$ and
$2\leq s\leq t$.
As by-products,  we  completely solve the Conjecture  posed by Chen and Zhang  in [Linear Multilinear Algebra 69 (10) (2021) 1922--1934].

\begin{flushleft}
\textbf{Keywords:} $K_{s, t}$-minor free; local edge maximality; local degree sequence majorization; double eigenvectors transformation; $A_{\alpha}(G)$-matrix
\end{flushleft}
\textbf{AMS Classification:} 05C50 05C75

\section{Introduction}
Let $G=(V(G),E(G))$ be an undirected simple graph.
The \textit{adjacency matrix}  of $G$ is the $n \times n$ matrix   $A(G)=(a_{ij})$, where $a_{ij}=1$
if $v_{i}$ is adjacent to $v_{j}$, and $0$ otherwise.
The \textit{$Q$-matrix} (or \textit{signless Laplacian matrix}) of $G$ is defined as $Q(G)=D(G)+A(G)$,
where $D(G)$ is the diagonal matrix of vertex degrees of $G$. The largest eigenvalue of $Q(G)$,
denoted by $q(G)$, is called the \textit{$Q$-index} (or \textit{signless Laplacian spectral radius}) of $G$.
For two vertex disjoint graphs $G$ and $H$, we denote by $G \cup H$ the union of $G$ and $H$, and $G \vee H$  the join of $G$ and $H$, i.e., joining every vertex of $G$ to every vertex of $H$.
Denote by $kG$ the union of $k$ disjoint copies of $G$.
As usual, denote by $K_{n}$ a \textit{complete graph} of order $n$, and $K_{m, n}$ a \textit{complete bipartite graph} on $m + n$ vertices.
Let $F_{s, t}(n):\cong K_{s-1} \vee(p K_{t}\cup K_{r})$, where $2 \leq s \leq t, n-s+1=p t+r$ and $1\leq r\leq t$.
It is easy to check that $F_{s, t}(n)$ is a $K_{s, t}$-minor free graph of order $n$. \
For a graph $G$, let $\overline{G}$ be its complement. Denote by $S^{1}(G)$ a graph obtained from a graph $G$ by subdividing once of an edge $uv$ with minimum degree sum $d_{G}(u)+d_{G}(v)$.
Denote by $H^{\star}$ the Petersen graph.
Let $H_{s, t}:\cong(\beta-1)K_{1,s} \cup K_{1,\alpha}$, where  $1\leq s\leq t$, $\beta=\left\lfloor\frac{t+1}{s+1}\right\rfloor$ and $\alpha=t-(\beta-1)(s+1)\geq s$.
Given two graphs $G$ and $H$, $H$ is a \textit{minor} of $G$
if $H$ can be obtained from a subgraph of $G$ by contracting edges.
A graph is said to be \textit{$H$-minor free} if it does not contain $H$ as a minor.

Minors play a key role in graph theory, and extremal problems on forbidding
minors have attracted appreciable amount of interests.
Firstly, it is very useful for
studying the structures and properties of graphs. For example,
every planar graph is $\{K_{3,3}, K_5\}$-minor free and every outer-planar graph is $\{K_{2,3}, K_4\}$-minor free.
Secondly, one of the problems in extremal graph theory is to concern about the maximum
number of edges for graphs that do not contain a given $H$ as a minor. It is known that a
planar graph has at most $3n-6$ edges and an outer planar graph has at most $2n-3$ edges, see \cite{A.B}.

In 2017, Nikiforov \cite{Nikiforov2} provided a unified extension of both the adjacency spectral
radius and the signless Laplacian spectral radius. For a graph $G$, it was proposed by Nikiforov \cite{Nikiforov2}
to study the family of matrices $A_{\alpha}(G)$ defined as
$$A_{\alpha}(G) = \alpha D(G) + (1 - \alpha)A(G),$$
where $0\leq\alpha \leq1$.
The \textit{$A_\alpha$-spectral radius} (or \textbf{$\alpha$-index} ) of $G$, denoted by $\rho_{\alpha}(G)$, is the largest eigenvalue of $A_{\alpha}(G)$.
Nikiforov \cite{Nikiforov2} posed the $A_{\alpha}$-spectral extrema problem:
\begin{prob}\label{prob-0}
Given a graph $F$, what is the maximum $A_\alpha$-spectral radius of a graph $G$ of order $n$, with no
subgraph isomorphic to $F$?
\end{prob}
At present, for $0 \leq \alpha<1$, the $A_{\alpha}$-spectral extrema problem is done when $F$ is a $K_r$ (see \cite{Nikiforov2}), $F$ is a $K_r$-minor (see \cite{M.T} and \cite{C.M}), and $F$ is a star forest (see \cite{C.M2} and \cite{C.M}). In this paper, we pay our attention on Problem \ref{prob-0}
when $F$ is a $K_{s,t}$-minor. For special value $\alpha\in \left\lbrace 0, \frac{1}{2}\right\rbrace$, there are plentiful results.


For the special value $\alpha=0$, then $A_\alpha(G) = A(G)$ of $G$.
In 2007,
Nikiforov \cite{Nikiforov1} (resp. Zhai and Wang \cite{M.Q1} ) obtained a sharp upper bound of adjacency  spectral radius over all $K_{2,2}$-minor free graphs of odd (resp. even) order, and determined the extremal graph.
In 2017, Nikiforov \cite{Nikiforov}
established a sharp upper bound of adjacency  spectral radius over all $K_{2,t}$-minor free graphs of large
order $n$ for $t\ge3$, and determined the extremal graph when $t \mid n-1$. In addition, Nikiforov determined the extremal graph when $t=3$ for all $n$. In 2019,
Tait \cite{M.T}
obtained an upper bound of adjacency spectral radius
over all  $K_{s,t}$-minor free graphs of large order $n$ for  $2\leq s\leq t$, and determined  the   extremal graph when
$t \mid  n-s+1$. In 2021, Wang, Chen and Fang \cite{B.W} determined the extremal graph with maximum adjacency spectral radius
over all  $K_{3,3}$ (resp. $K_{2,4}$)-minor free graphs of large order.
In 2022, Zhai and Lin \cite{M.Q} improved  Tait's result \cite{M.T} by removing the condition $t \mid n-s+1$.
Meanwhile, they
determined the extremal graph with maximum adjacency spectral radius over all $K_{1,t}$-minor free graphs.
Thus the adjacency spectral extremal problem on $K_{s,t}$-minor free graphs
is completely solved for large order.

For the special values $\alpha=\frac{1}{2}$, then $A_\alpha(G) = Q(G)$ of $G$.
In 2022, Zhang and Lou \cite{Y.T} determined the unique extremal graph with maximum $Q$-index among all $n$-vertex connected $K_{1,t}$-minor free graphs.
For $2 \leq s \leq t$, Chen and Zhang proposed the following conjecture in \cite{C.M1}.
\begin{conj}(\cite{C.M1})\label{conj::1}
Let $2 \leq s \leq t$ and $G$ be a $K_{s,t}$-minor free graph of sufficiently large order $n$. Then
$$q(G) \leq q(F_{s,t}(n))$$
with equality if and only if $G \cong F_{s,t}(n)$.
\end{conj}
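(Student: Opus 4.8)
The plan is to establish the stronger statement that for every fixed $\alpha\in(0,1)$ and every sufficiently large $n$, the graph $F_{s,t}(n)$ is the unique maximizer of $\rho_\alpha$ among $n$-vertex $K_{s,t}$-minor free graphs; Conjecture~\ref{conj::1} is then the special case $\alpha=\tfrac12$, since $Q(G)=2A_{1/2}(G)$ and hence $q(G)=2\rho_{1/2}(G)$. Fix $\alpha\in(0,1)$, let $G$ be a $K_{s,t}$-minor free graph on $n$ vertices with $\rho_\alpha(G)$ maximum, put $\rho=\rho_\alpha(G)$, and let $\mathbf{x}>0$ be the Perron vector of $A_\alpha(G)$, normalized so that $\max_v x_v=x_u=1$. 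Because $A_\alpha(H)$ is entrywise dominated by $A_\alpha(G)$ whenever $H\subseteq G$, the parameter $\rho_\alpha$ is monotone under subgraphs; in particular $G$ is \emph{edge-maximal} $K_{s,t}$-minor free and $\rho\ge\rho_\alpha(F_{s,t}(n))$. Evaluating the $3\times3$ quotient matrix of the equitable partition of $F_{s,t}(n)=K_{s-1}\vee(pK_t\cup K_r)$ into apices / $K_t$-vertices / $K_r$-vertices yields the sharper bound $\rho\ge\rho_\alpha(F_{s,t}(n))\ge\alpha(n-1)+c_0$ for a constant $c_0=c_0(\alpha,s,t)>0$.

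\textbf{Step 1 (rough structure: the apex reduction).} The first goal is to show that $G$ has exactly $s-1$ vertices of degree $n-1$, all other vertices having bounded degree. Using the known $O(n)$ bound on the number of edges of a $K_{s,t}$-minor free graph, at most $o(n)$ vertices can have degree that grows with $n$; and from the eigen-equation at $v$, $x_v\le\frac{(1-\alpha)d_v}{\rho-\alpha d_v}$, so (as $\rho\sim\alpha n$) every vertex of bounded degree has $x_v=O(1/n)$. Substituting this into the eigen-equation at $u$ and into the expansion of $(A_\alpha^2\mathbf{x})_u$, and then bootstrapping, one obtains a set $S$ of vertices of degree $n-1$ that induces a clique and outside of which all degrees are $O(1)$: that $|S|\le s-1$ is forced by $K_{s,t}$-minor freeness (otherwise $s$ almost-universal vertices together with $t$ of their common low-degree neighbours already give a $K_{s,t}$-subgraph), and $|S|\ge s-1$ follows by comparison with the lower bound on $\rho$. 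Hence $G=K_{s-1}\vee H$, where $H:=G-S$ has $m:=n-s+1$ vertices, and $H$ has no $K_{1,t}$-minor: if it did, folding $S$ together with the centre of the star as the $s$-side and the $t$ leaves as the $t$-side would produce a $K_{s,t}$-minor of $G$, since $K_{s-1}\vee K_{1,t}\supseteq K_{s,t}$.

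\textbf{Step 2 (identifying $H$).} It remains to determine which $H$ maximizes $\rho_\alpha(K_{s-1}\vee H)$ subject to $H$ having no $K_{1,t}$-minor \emph{and} $K_{s-1}\vee H$ having no $K_{s,t}$-minor; note that this second constraint is genuinely stronger (it already excludes, e.g., long cycles, for which $K_{s-1}\vee C_m$ does contain a $K_{s,t}$-minor). Starting from the extremal $H$, I would apply a sequence of \emph{local} transformations, each preserving $K_{s,t}$-minor freeness of $K_{s-1}\vee H$ and never decreasing $\rho_\alpha$, terminating at $pK_t\cup K_r$; by maximality $H$ must already equal $pK_t\cup K_r$. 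Concretely: local edge maximality forces the extremal $H$ to be a disjoint union of cliques of order at most $t$; then local degree-sequence majorization, implemented by a double eigenvectors transformation --- moving a vertex from a smaller clique to a larger one (sizes kept $\le t$) and comparing $\rho_\alpha$ before and after through a single Rayleigh-type inequality built simultaneously from the two Perron vectors --- strictly increases $\rho_\alpha$ until all cliques but one have order exactly $t$. This gives $H\cong pK_t\cup K_r$ with $m=pt+r$, $1\le r\le t$, i.e.\ $G\cong F_{s,t}(n)$; strictness of the transformations yields the uniqueness asserted in Conjecture~\ref{conj::1}.

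\textbf{Where the difficulty lies.} The main obstacle is Step~1. The available lower bound only gives $\rho\ge\alpha(n-1)+O(1)$, hence a priori merely $d_u\gtrsim\alpha n$, and upgrading this to exact degree $n-1$ for a clique of size $s-1$ while simultaneously confining \emph{every} other vertex to bounded degree requires a careful balance between the $A_\alpha^2$-expansion at the vertex of maximum Perron weight and the linear edge bound; it is here that forbidding $K_{s,t}$ as a \emph{minor} rather than as a subgraph is indispensable, since that is what keeps $e(G)$ linear and the neighbourhoods of the apices almost edgeless. Once $G=K_{s-1}\vee H$ with $H$ a disjoint union of small cliques is in hand, Step~2 is a delicate but essentially finite optimization governed by the transformations above.
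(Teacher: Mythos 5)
There is a genuine error, and it is located in your Step 2 (and hence in your overall conclusion). You set out to prove the stronger claim that for every $\alpha\in(0,1)$ the graph $F_{s,t}(n)$ is the unique maximizer of $\rho_\alpha$ among $n$-vertex $K_{s,t}$-minor free graphs, asserting that local edge maximality forces the extremal $H$ (with $G=K_{s-1}\vee H$) to be a disjoint union of cliques of order at most $t$, and that degree-sequence majorization then drives $H$ to $pK_t\cup K_r$. This is false. The relevant constraint on $H$ is the $(s,t)$-property (Lemma \ref{lem::2.10}), and edge-maximal graphs with this property on $t+1$ vertices are \emph{not} unions of cliques: they are complements of star forests with $\beta=\lfloor\frac{t+1}{s+1}\rfloor$ components and have $\binom{t}{2}+\beta-1$ edges (Lemma \ref{lem::2.12}), which exceeds the $\binom{t}{2}$ edges of $K_t\cup K_1$ as soon as $\beta\ge 2$. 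Consequently, in the residue classes $r\le 2(\beta-1)$ (and in the case $r=2$, $t=8$, $\beta=1$, where $\overline{H^\star}$ on $t+2$ vertices has $30>29=e(K_8\cup K_2)$ edges) the extremal graph is $K_{s-1}\vee\bigl((p-r)K_t\cup r\,\overline{H_{s,t}}\bigr)$, or $K_{s-1}\vee\bigl((p-1)K_t\cup S^1(\overline{H_{s,t}})\bigr)$, or $K_{s-1}\vee\bigl((p-1)K_t\cup\overline{H^\star}\bigr)$, not $F_{s,t}(n)$ — this is exactly Theorem \ref{thm::1.1}. A concrete counterexample to your claimed endpoint of the optimization: take $s=2$, $t=5$, $n\equiv 2\pmod 5$; then $\overline{H_{2,5}}$ (complement of $2K_{1,2}$ on $6$ vertices) has $11$ edges versus $10$ for $K_5\cup K_1$, and $K_1\vee\bigl((p-1)K_5\cup\overline{H_{2,5}}\bigr)$ has strictly larger $\rho_\alpha$ (and $Q$-index) than $F_{2,5}(n)$ for all large such $n$.

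Because of this, the statement you are asked to prove cannot be proved as stated: Conjecture \ref{conj::1} is \emph{false} for infinitely many $n$ whenever $t\ge 2s+1$ (so $\beta\ge2$) or $(t,r)=(8,2)$ with $\beta=1$, and the paper ``completely solves'' it in the sense of determining, via Theorem \ref{thm::1.1} at $\alpha=\tfrac12$ (using $q(G)=2\rho_{1/2}(G)$, as you correctly note), exactly when the conjectured extremal graph is correct (the ``otherwise'' case $r>2(\beta-1)$, excluding $r=2,t=8,\beta=1$) and what replaces it in the exceptional cases. Your Step 1 outline (apex reduction to $G=K_{s-1}\vee H$) is broadly in line with the paper's Lemmas \ref{lem::2.1}--\ref{lem::2.7}, though sketchy; but Step 2 needs the full case analysis of components of order $t+1$, $t+2$, $t+3$ (Lemmas \ref{lem::3.5}--\ref{lem::3.14}), where the complements of star forests, the subdivision $S^1(\overline{H_{s,t}})$, and the complement of the Petersen graph genuinely appear as winners — a phenomenon your proposed termination argument rules out by fiat.
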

The previous results showed that the Conjecture \ref{conj::1} is true in several cases. In 2013, Freitas, Nikiforov and Patuzzi \cite {M.F} showed that $F_{2,2}(n)$ is the extremal graph with maximum $Q$-index among all $K_{2,2}$-minor free graphs for $n \geq 4$.
In 2021, Chen and Zhang \cite {C.M1} showed that $F_{2,3}(n)$ (resp. $F_{3,3}(n)$) is the extremal graph with maximum $Q$-index among all $K_{2,3}$ (resp. $K_{3,3}$)-minor free graphs for $n \geq 22$ (resp. $n \geq 1186$).
They also obtained an upper bound of $Q$-index
over all $K_{2,t}$-minor free graphs of order $n\geq t^{2} +4t+1$ with $t\ge 3$, and proved the extremal graph is $F_{2,t}(n)$ when $t \mid n-1$.

However, for general $\alpha\in(0,1)$,
the related results are few. In 2022, Chen, Liu and Zhang \cite{C.M}
obtained an upper bound of $A_\alpha$-spectral radius over all  $K_{s,t}$-minor free graphs of large order $n$ for  $2\leq s\leq t$, and proved that $F_{s,t}(n)$ is the extremal graph when
$t \mid  n-s+1$.
Therefore,
Problem \ref{prob-0} is still open when $F$ is a $K_{s,t}$-minor for $t \nmid  n-s+1$.
Naturally, we want to overcome the following problem:
\begin{prob}\label{1.1}
For $2\leq s\leq t$, what is the extremal graph with maximum $A_\alpha$-spectral radius among all $K_{s, t}$-minor free graphs of sufficiently large order for $0 < \alpha <1$?  Whether the extremal graph is consistent for $0 \leq \alpha <1$?
\end{prob}


In this paper, the \emph{$A_\alpha$-spectral extremal graph} is defined by a graph with maximum $A_\alpha$-spectral radius among all $K_{s, t}$-minor free graphs of sufficiently order $n$ for $0 < \alpha<1$ and $2\leq s\leq t$.
In order to characterize the structure of the {$A_\alpha$-spectral extremal graph,
our first challenge is to show that
the $A_\alpha$-spectral extremal graph contains a $(s-1)$-clique dominating set, the way of which is different from the adjacency spectra. In addition, we prove that the $A_\alpha$-spectral extremal graph has a property of
local edge maximality. Meanwhile, we apply  the double eigenvectors transformation technique to the $A_{\alpha}(G)$-matrix and get that
the $A_\alpha$-spectral extremal graph has a property of local degree sequence majorization.
Finally, we completely determined the $A_\alpha$-spectral extremal graph as follows.

\begin{thm}\label{thm::1.1}
Let $0<\alpha<1$, $2 \leq s \leq t$, $\beta=\left\lfloor\frac{t+1}{s+1}\right\rfloor$ and $G^{*}$ be a $K_{s,t}$-minor free graph of sufficiently large order $n$ with the maximum $A_\alpha$-spectral radius, where $n-s+1=pt+r$ and $1\leq r\leq t$. Then
$$
G^{*} \cong \begin{cases}K_{s-1} \vee \left((p-1) K_{t} \cup \overline{H^{\star}}\right) & \text { if } r=2, t=8 \text { and } \beta=1 ; \\ K_{s-1} \vee\left( (p-1) K_{t} \cup S^{1}\left(\overline{H_{s, t}}\right)\right) & \text { if } r=\beta=2; \\ K_{s-1} \vee\left((p-r) K_{t} \cup r \overline{H_{s, t}}\right) & \text { if } r \leq 2(\beta-1) \text { except } r=\beta=2 ; \\ K_{s-1} \vee\left(p K_{t} \cup K_{r}\right) & \text { otherwise. }\end{cases}
$$
\end{thm}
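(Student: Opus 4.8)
The plan is to follow the classical "spectral $\Rightarrow$ stability $\Rightarrow$ exact structure" scheme, but carried out for $A_\alpha$ rather than $A$, so I first record the tools adapted to $A_\alpha(G)$. Write $\rho_\alpha = \rho_\alpha(G^*)$ and let $\bm{x}$ be the Perron eigenvector, normalised by $\max_i x_i = 1$. Since $F_{s,t}(n)$ is $K_{s,t}$-minor free and contains $K_{s-1}\vee (pK_t\cup K_r)$ with a vertex of degree $n-1$, a standard computation gives $\rho_\alpha(G^*)\ge \rho_\alpha(F_{s,t}(n)) = n - (1-\alpha)(s-1) - o(1)$, in particular $\rho_\alpha \ge \alpha(n-1)+$ (a term of order $\Theta(n)$ bounded below). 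The first genuine step is the \emph{degree/neighbourhood lower bound}: from the eigenvalue equation $\rho_\alpha x_u = \alpha d(u)x_u + (1-\alpha)\sum_{v\sim u}x_v$ applied to a vertex $u$ with $x_u = 1$, together with $\rho_\alpha = \Theta(n)$, one extracts that $u$ has degree $\Theta(n)$ and that $\sum_{v\sim u}x_v$ is large; iterating and using the $K_{s,t}$-minor free edge bound $e(G^*) = O(n^{3/2})$ (from Kostochka--Thomason type results, which are cited in the paper's introduction) forces all but $O(\sqrt n)$ vertices to have eigenvector entry $1-o(1)$ and to be adjacent to $u$. The standard minor-packing argument then yields that $G^*$ contains a set $D$ of $s-1$ vertices each adjacent to all of $G^*$ (an \emph{$(s-1)$-clique dominating set}) — the introduction already flags this as the first challenge, and the $A_\alpha$ twist is that one must control the $\alpha D(G)$ term, i.e.\ show large degree rather than merely large eigenvector-weight; this is done by comparing $x_u$ for the two heaviest vertices.

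With $D$ in hand, write $G^* = K_{s-1}\vee G'$ where $G'$ is a $K_{1,t}$-minor free graph on $n-s+1$ vertices (contracting $D$ into one vertex turns a $K_{s,t}$-minor of $G^*$ into a $K_{1,t}$-minor, roughly), so each component of $G'$ has at most $t-1$... more precisely each block of $G'$ is small; the relevant structural fact is that a connected $K_{1,t}$-minor free graph has at most $t-1$ edges incident to high-degree structure, and for the extremal problem the components of $G'$ must be chosen to maximise a Rayleigh-type quotient. Here I invoke the two properties the paper says it establishes: \emph{local edge maximality} (within the $K_{s,t}$-minor free constraint, one cannot add an edge to $G^*$ without increasing $\rho_\alpha$, hence $G'$ is edge-maximal $K_{1,t}$-minor free) and \emph{local degree sequence majorization} via the \emph{double eigenvectors transformation} applied to $A_\alpha$ — this last is the technical heart, allowing one to compare two candidate graphs by comparing degree sequences of corresponding vertices and concluding the one majorized has strictly smaller $\rho_\alpha$. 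Edge-maximal $K_{1,t}$-minor free graphs on $m$ vertices are, up to the small leftover piece, disjoint unions of $K_t$'s (each $K_t$ has $t-1$ edges from... no: $K_t$ is $K_{1,t}$-minor free iff $t\ge$ the right bound — one checks $K_t$ contains no $K_{1,t}$ minor), so $G'$ looks like $(p-?)K_t \cup (\text{small remainder on } r + ?t \text{ vertices})$, and the whole optimisation reduces to: among $K_{1,t}$-minor free graphs on $r$-plus-a-multiple-of-$t$ leftover vertices, which maximises the Perron weight contribution? That local optimisation — a finite problem once $D$ is fixed — is what produces the four cases (the Petersen-complement anomaly at $t=8,r=2$, the subdivision case $r=\beta=2$, the $r\overline{H_{s,t}}$ case, and the generic $K_r$ case).

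The remaining work is to turn "$G'$ is essentially $(p-j)K_t\cup(\text{remainder})$" into an \emph{exact} identification. I would argue as follows: by local edge maximality every component of $G'$ that can be completed to $K_t$ must be a $K_t$, so at most $O(1)$ components are "defective", and by a weight-exchange argument (moving a vertex from a defective component to complete a $K_t$, tracking the change in $\rho_\alpha$ through the eigenvector equation) the number of vertices outside the $K_t$-part is exactly $r$ or $r+t$ or $r+2t$ depending on how the divisibility $r = pt+\cdots$ interacts with the block-size constraints encoded by $\beta = \lfloor (t+1)/(s+1)\rfloor$ and $H_{s,t}$. One then compares the finitely many admissible configurations of the leftover $\le 2t+O(1)$ vertices by a direct $A_\alpha$-spectral-radius computation — here the precise value of $\alpha\in(0,1)$ enters only through monotone coefficients, so the comparisons are uniform in $\alpha$, which simultaneously answers the second half of Problem~\ref{1.1} (the extremal graph is the same for all $\alpha\in(0,1)$, and coincides with the $\alpha=0$ answer of Zhai--Lin) and, specialising to $\alpha=\tfrac12$, resolves Conjecture~\ref{conj::1}. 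The main obstacle I anticipate is the exact (not merely asymptotic) leftover analysis: the double eigenvectors transformation must be pushed to give \emph{strict} inequalities between near-optimal graphs differing in only a bounded number of edges, and near the boundary cases ($r$ close to $2(\beta-1)$, or $t=8$) the competing graphs have $A_\alpha$-spectral radii differing by only $o(1)$, so one needs second-order estimates of $\rho_\alpha$ in terms of the local edge count and the eigenvector entries on the $O(1)$-sized leftover, rather than the crude first-order bounds that suffice for the stability step.
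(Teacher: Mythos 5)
Your outline reproduces the paper's general scheme (an $(s-1)$-clique dominating set, local edge maximality, degree-sequence majorization via a double-eigenvector argument, then a bounded ``leftover'' analysis), but as a proof it has genuine gaps, and two of its load-bearing reductions are incorrect as stated. First, after removing the dominating clique $K$ you reduce to ``$G'=G^*-K$ is $K_{1,t}$-minor free.'' That is strictly weaker than what is true and what is needed: the correct reduction (Lemma~\ref{lem::2.10}, due to Zhai--Lin) is that $G^*-K$ has the $(s,t)$-property, i.e.\ it is $K_{a,b}$-minor free for \emph{all} $a+b=t+1$ with $a\le\min\{s,\lfloor\frac{t+1}{2}\rfloor\}$. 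The dependence on $s$ enters only through this stronger property; it is what forces an edge-maximal order-$(t+1)$ component to have exactly $\binom{t}{2}+\beta-1$ edges with complement a star forest of $\beta$ stars each of size at least $s+1$ (hence $\overline{H_{s,t}}$), and what produces $S^{1}(\overline{H_{s,t}})$ and $\overline{H^{\star}}$ at order $t+2$. With only $K_{1,t}$-minor freeness, $\beta$ never appears and none of the exceptional graphs, nor the threshold $r\le 2(\beta-1)$, can be derived. Second, several quantitative claims in your stability step are false: $K_{s,t}$-minor free graphs have $O(n)$ edges, not $O(n^{3/2})$ (you are quoting the Kővári--Sós--Turán subgraph bound, not a minor bound); it is not the case that all but $O(\sqrt n)$ vertices have eigenvector entry $1-o(1)$ (in the extremal structure all vertices outside $K$ have bounded degree and entries of order $x_0/(\alpha n)$); and $\rho_\alpha(F_{s,t}(n))$ is $\alpha n+O(1)$, not $n-(1-\alpha)(s-1)-o(1)$.

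The more fundamental gap is that the case-defining work is asserted rather than done. You write that the optimisation over the leftover is ``a finite problem'' whose resolution ``produces the four cases,'' and that the competing configurations are separated ``by a direct $A_\alpha$-spectral-radius computation,'' uniform in $\alpha$ because $\alpha$ ``enters only through monotone coefficients.'' But the candidates (e.g.\ $pK_t\cup K_r$ versus $(p-r)K_t\cup r\overline{H_{s,t}}$, or $(p-2)K_t\cup 2\overline{H_{s,t}}$ versus $(p-1)K_t\cup S^{1}(\overline{H_{s,t}})$ when $r=\beta=2$) have $A_\alpha$-spectral radii differing by $o(1)$, and no direct computation is given or sketched; you yourself flag exactly this as the unresolved ``main obstacle.'' The paper circumvents it: strict comparisons are obtained not by computing $\rho_\alpha$ of the candidates but by the local edge-maximality and degree-sequence-majorization lemmas (Lemmas~\ref{lem::3.2}--\ref{lem::3.4}), which convert the Rayleigh-quotient difference into comparisons of $e(H)$, of $\sum_{uv\in E(H)}(d_H(u)+d_H(v))$, and of degree sequences, with the eigenvector entries controlled only through the crude bounds (\ref{equ::4}). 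These are then combined with the combinatorial lemmas on the $(s,t)$-property (Lemmas~\ref{lem::2.11}--\ref{lem::2.15}) to pin down each component exactly (Lemmas~\ref{lem::3.6}--\ref{lem::3.14}), including ruling out components of order $t+3$ and bounding the number of $\overline{H_{s,t}}$-components by $2(\beta-1)$. None of this appears in your proposal, and without it the statement of the theorem --- in particular the exceptional cases at $r\le 2(\beta-1)$, $r=\beta=2$ and $t=8$ --- is not established. Separately, the small cases $t\in\{2,3\}$ (where the $t\ge 4$ structural lemmas do not apply) need the ad hoc arguments of Lemmas~\ref{lem::3.1'}--\ref{lem::3.1'''}, which your outline omits entirely.
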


Taking $\alpha=\frac{1}{2}$ in Theorem \ref{thm::1.1},  we  completely solve Conjecture \ref{conj::1}.
On the other hand, we completely answer the front part of Problem \ref{1.1}. Combining the result of the extremal graph with maximum adjacency spectral radius among all $K_{s,t}$-minor free graphs in \cite{M.Q},
we give a positive answer to the last part of Problem \ref{1.1}.

Our proofs are based on the structural analysis of the $A_\alpha$-spectral extremal graph $G^{*}$, which is motivated by the work of Zhai and Lin in \cite{M.Q}. Some special notations, terminologies and lemmas will be presented in Section 2. We also prove that $G^{*}$ has a $(s-1)$-clique dominating set in Section 2. The proofs of $G^{*}$ has the structural properties of local edge maximality and local degree sequence majorization are shown in Section 3. The proof of Theorem \ref{thm::1.1} will be shown in Section 4.

\section{Preliminaries}
In this section, we will list some symbols and  useful lemmas. Let $\pi(G)=\left( d_{1}, d_{2},\dots , d_{n}\right) $ be the non-increasing degree sequence of an $n$-vertex graph $G$.
For $A, B \subseteq V(G)$, $e(A, B)$ is denoted the number of the edges of $G$ with one end vertex in $A$ and the other in $B$.
For a vertex  $v \in V(G)$, we write $N_{G}(v)$ for the set of neighbors of $v$ in $G$. Let $d_{G}(v)$ be the degree of a vertex  $v$ in $G$. Let $H$ be a subgraph of $G$, we write $N_{H}(v)$ for the set of neighbors of $v$ in $V(H)$, and  $d_{H}(v)$  for the number of neighbors of  $v$  in $V(H)$, that is,  $d_{H}(v)=\left|N_{H}(v)\right|=|N_{G}(v) \cap V(H)|$.  Let $\Delta(G)$ and $\delta(G)$ denote the maximum degree and minimum degree of $G$, respectively.
For $A \subseteq V(G)$, the graph $G[A]$ is the \textit{induced subgraph} by $A$. $G[A]$ is called a clique if it is a complete subgraph of $G$. Let $G$ be an $n$-vertex graph and $A\subseteq V(G)$, then $A$ is called a \textit{clique dominating set} of $G$, if $d_{G}(v)=n-1$ for any $v\in A$.
Let $K_{n}-e$ be a graph obtained by deleting one edge from a complete graph $K_{n}$.
For graph notations and concepts undefined here, readers are referred to \cite {A.B}.

\begin{lem}(\cite{Nikiforov2})\label{lem::2.2}
Let $0\leq\alpha<1$, then the $\alpha$-index of any proper subgraph of a connected graph is smaller than the $\alpha$-index of the original graph.
\end{lem}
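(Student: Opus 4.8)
The plan is to derive the statement from the Perron--Frobenius theory of nonnegative matrices, via an entrywise comparison of the two $A_\alpha$-matrices. First I would record that for $0\le\alpha\le1$ the matrix $A_\alpha(G)=\alpha D(G)+(1-\alpha)A(G)$ is entrywise nonnegative and symmetric, so its largest eigenvalue $\rho_\alpha(G)$ coincides with its spectral radius $\rho(A_\alpha(G))$. The decisive observation is that when $\alpha<1$ the off-diagonal entry in position $(u,v)$ equals $(1-\alpha)a_{uv}$, which is strictly positive exactly when $uv\in E(G)$; hence the off-diagonal support of $A_\alpha(G)$ is precisely $E(G)$. Since $G$ is connected (and has at least two vertices), the digraph associated with $A_\alpha(G)$ is strongly connected, so $A_\alpha(G)$ is irreducible. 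This is exactly where the hypothesis $\alpha<1$ is indispensable: at $\alpha=1$ the matrix degenerates to the diagonal $D(G)$, irreducibility is lost, and the strict conclusion fails.

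Next I would place the subgraph in the same ambient dimension. Given a proper subgraph $H$ of $G$, form the $n\times n$ matrix $\tilde A$ obtained from $A_\alpha(H)$ by indexing rows and columns by $V(G)$ and inserting zero rows and columns for the vertices of $V(G)\setminus V(H)$. Then $\tilde A$ is block diagonal with blocks $A_\alpha(H)$ and a zero block, so $\rho(\tilde A)=\max\{\rho_\alpha(H),0\}=\rho_\alpha(H)$ because $A_\alpha(H)$ is nonnegative. I would then verify the entrywise domination $0\le\tilde A\le A_\alpha(G)$: on the diagonal $\alpha\,d_H(v)\le\alpha\,d_G(v)$ since $H$ is a subgraph, while off the diagonal $(1-\alpha)[uv\in E(H)]\le(1-\alpha)[uv\in E(G)]$.

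Crucially the domination is strict in at least one entry. If $H$ omits an edge $uv$ with $u,v\in V(H)$, then $\tilde A_{uv}=0<1-\alpha=A_\alpha(G)_{uv}$; if instead $H$ omits a vertex $w$, then the $w$-th row of $\tilde A$ is zero whereas $A_\alpha(G)$ has a positive entry there, as the connectedness of $G$ forces $w$ to have a neighbor. In either case $\tilde A\ne A_\alpha(G)$. It then remains to invoke the strict Perron--Frobenius monotonicity principle: if $0\le B\le C$ entrywise, $B\ne C$, and $C$ is irreducible, then $\rho(B)<\rho(C)$. Applying it with $B=\tilde A$ and $C=A_\alpha(G)$ yields $\rho_\alpha(H)=\rho(\tilde A)<\rho(A_\alpha(G))=\rho_\alpha(G)$, which is the assertion.

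The one step deserving care, and the main (if standard) obstacle, is the strict monotonicity principle, which I would justify directly rather than merely cite. Let $\mathbf z>0$ be a left Perron eigenvector of the irreducible $C$ and $\mathbf w\ge0$, $\mathbf w\ne0$, a right Perron eigenvector of $B$. From $B\le C$ one gets $\rho(B)\,\mathbf z^{\top}\mathbf w=\mathbf z^{\top}B\mathbf w\le\mathbf z^{\top}C\mathbf w=\rho(C)\,\mathbf z^{\top}\mathbf w$, and since $\mathbf z^{\top}\mathbf w>0$ this gives $\rho(B)\le\rho(C)$. Were equality to hold, then $\mathbf z^{\top}(C-B)\mathbf w=0$ with every summand nonnegative and $\mathbf z>0$, forcing $(C-B)_{ij}w_j=0$ for all $i,j$; writing $T=\{j:w_j>0\}$ and $S=\{j:w_j=0\}$ (both nonempty, the first because $\mathbf w\ne0$, the second because $C\ne B$ produces a vanishing coordinate of $\mathbf w$), one deduces $C_{ij}=B_{ij}$ for $j\in T$ and, using $B\mathbf w=\rho(B)\mathbf w$ on $S$, that $C_{ij}=0$ for all $i\in S,\ j\in T$. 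This means $T$ is unreachable from $S$ in the digraph of $C$, contradicting irreducibility; hence $\rho(B)<\rho(C)$, completing the argument.
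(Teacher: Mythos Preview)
Your proof is correct. The paper does not actually prove this lemma; it is quoted from Nikiforov's foundational $A_\alpha$ paper \cite{Nikiforov2} and stated without argument. You have supplied a self-contained Perron--Frobenius proof, correctly isolating the role of the hypothesis $\alpha<1$ (to ensure irreducibility via strictly positive off-diagonal entries on edges) and carefully justifying the strict monotonicity $\rho(B)<\rho(C)$ for $0\le B\le C$, $B\ne C$, $C$ irreducible, rather than merely citing it. One small remark: in your embedding step you might note that the case $|V(G)|=1$ is vacuous (a single vertex has no proper subgraph with the same vertex set, and removing the vertex leaves the empty graph with $\rho_\alpha=0$), so assuming $n\ge2$ for irreducibility is harmless.
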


The following result is from  the proof of Theorem 1.2 in \cite{C.M}.

\begin{lem}(\cite{C.M})\label{lem::2.1}
Let  $G$ be a $K_{s,t}$-minor free graph of sufficiently large order $n$ with maximum $\alpha$-index, where $2 \leq s \leq t$ and  $0 < \alpha < 1$. Then $G$ contains a vertex set $K=\left\{v_{1}, v_{2}, \ldots, v_{s-1}\right\}$ such that all of $v_{i}$ have common neighborhood of size $n-s+1$ in $G$. That is, $d_{G-K}\left(v_{i}\right)=n-s+1$ for $i \in\{1,2, \ldots, s-1\}$.
\end{lem}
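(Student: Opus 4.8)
\noindent\emph{Proof proposal.} The plan is to squeeze $\rho:=\rho_\alpha(G)$ between a lower bound supplied by the conjectured extremal graph and an upper bound supplied by the structure that $K_{s,t}$-minor freeness forces on $G$, and then to upgrade that structure from approximate to exact.

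First I would record the lower bound. Since $F_{s,t}(n)$ is $K_{s,t}$-minor free and $G$ has maximum $\alpha$-index, $\rho\ge\rho_\alpha(F_{s,t}(n))\ge\rho_\alpha(K_{s-1}\vee\overline{K_{n-s+1}})$; evaluating the $\alpha$-index of the complete split graph $K_{s-1}\vee\overline{K_{n-s+1}}$ through its obvious equitable $2$-partition gives
\[
\rho\ \ge\ \alpha(n-1)+(1-\alpha)(s-2)+\frac{(1-\alpha)^2(s-1)}{\alpha}-o(1),
\]
so in particular $\rho\ge\alpha(n-1)+c$ for a constant $c=c(s,t,\alpha)>0$ once $n$ is large. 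Next I would bring in two facts about any $K_{s,t}$-minor free graph $H$ on $n$ vertices: (a) $e(H)\le c_{s,t}\,n$ for a constant $c_{s,t}$ (the linear edge bound for graphs with no $K_{s,t}$-minor), so all but $o(n)$ vertices of $H$ have bounded degree; and (b) any $s$ distinct vertices of $H$ have at most $t-1$ common neighbours, since $t$ of them, necessarily lying outside the chosen $s$-set, would form a $K_{s,t}$-subgraph. By (b) and inclusion--exclusion on complements, at most $s-1$ vertices of $G$ have degree $\ge n-s$; let $K$ be this set and $k:=|K|\le s-1$.

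Then I would localize the Perron vector $x$ of $A_\alpha(G)$, normalized so that $\max_v x_v=1$. From $\rho x_v=\alpha d(v)x_v+(1-\alpha)\sum_{u\sim v}x_u\le\alpha d(v)x_v+(1-\alpha)d(v)$ together with $\rho\ge\alpha(n-1)+c$, any vertex of degree at most $n^{2/3}$ has $x_v=O(n^{-1/3})$; feeding this estimate back into the eigenvalue equation, and using (a) to control how many large-degree vertices a small-degree vertex can see, drives $x_v\to 0$ for every vertex of degree $o(n)$. Combining these estimates with (a) and (b), I would show that outside $K$ every vertex has degree bounded by a \emph{constant} $D_0=D_0(s,t,\alpha)$: a vertex outside $K$ of superconstant degree, paired with the near-universal neighbourhoods of the vertices of $K$, would exhibit an $s$-set with $t$ common neighbours, contradicting (b). Turning the sublinear bound of the previous sentence into a genuine constant bound $D_0$, uniformly in $n$ — i.e. ruling out vertices of intermediate, superconstant but sublinear, degree — is the place where (a) and (b) must be played off against each other, and this, rather than anything later, is the step I expect to be the main obstacle.

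Finally I would pass from the approximate structure to the exact one. Since now $G\subseteq K_k\vee G[V\setminus K]$ with $\Delta(G[V\setminus K])\le D_0$, monotonicity of $\rho_\alpha$ under subgraphs together with the equitable-partition estimate (run in the opposite direction) yields $\rho\le\alpha(n-1)+(1-\alpha)(k-1)+\frac{(1-\alpha)^2k}{\alpha}+o(1)$, which is strictly below the lower bound of the second paragraph whenever $k\le s-2$; hence $k=s-1$. It remains to show every $v\in K$ is adjacent to all other vertices. One may assume $G$ connected (any small components can first be reattached as pendant trees, which creates no $K_{s,t}$-minor and strictly raises the $\alpha$-index by Lemma~\ref{lem::2.2}); so suppose $v\in K$ has a non-neighbour $w$. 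The key claim is that $G+vw$ is still $K_{s,t}$-minor free: in any $K_{s,t}$-minor of $G+vw$ the new edge $vw$ must be used, and since $v$ misses at most $s-1$ edges of $G$ while $G-K$ has bounded degree, the branch sets can be rerouted so as to avoid $vw$ --- a case analysis according to whether $vw$ lies inside a branch set or between two of them, and according to the sizes of the branch sets involved --- contradicting the $K_{s,t}$-minor freeness of $G$. Granting the claim, $G+vw$ is a proper $K_{s,t}$-minor free supergraph of the connected graph $G$, so $\rho_\alpha(G+vw)>\rho_\alpha(G)$ by Lemma~\ref{lem::2.2}, contradicting the maximality of $\rho$. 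Therefore $d_G(v)=n-1$ for every $v\in K$; in particular $K$ is a clique and each $v_i$ is adjacent to all of $V\setminus K$, i.e. $d_{G-K}(v_i)=n-s+1$ for $i=1,\dots,s-1$, which is the assertion.
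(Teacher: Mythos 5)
First, a remark on provenance: the paper never proves this lemma itself --- it is quoted verbatim from the proof of Theorem 1.2 in \cite{C.M} --- so the only meaningful comparison is with that argument (Tait's scheme \cite{M.T} adapted to $A_\alpha$), whose general skeleton (lower bound from $F_{s,t}(n)$, linear edge bound for minor-free graphs, Perron-entry localization, extremality-driven modifications) your outline does mirror. But two steps of your proposal are genuinely defective. The first you flag yourself: upgrading the degree bound outside $K$ to a constant (even to $o(n)$). As written it is also circular: your mechanism for excluding a vertex $v\notin K$ of large degree is to adjoin it to $K$ and exhibit an $s$-set with $t$ common neighbours, which presupposes $|K|=s-1$, while $|K|=s-1$ is only deduced afterwards from the spectral upper bound that itself requires the degree bound. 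Handling this intermediate-degree regime is precisely the technical core of \cite{C.M}, and it is not supplied here.

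The second gap is more serious because you present it as routine: the ``key claim'' that $G+vw$ remains $K_{s,t}$-minor free whenever $v\in K$ misses $w$, justified only by rerouting branch sets using $d_G(v)\ge n-s$ and the bounded degrees outside $K$. That claim is false under exactly those hypotheses. Take $s=2$, $t=3$, let $G$ be obtained from $K_1\vee pK_3$ (centre $v$) by attaching one new vertex $w$ by a single edge to a triangle vertex $a$, with $w\not\sim v$. Then $G$ is $K_{2,3}$-minor free (every branch set of a $K_{2,3}$-model needs at least two outside adjacencies, so the leaf $w$ can always be discarded, and $G-w\cong K_1\vee pK_3$ has no such minor), $d_G(v)=n-2=n-s$, all vertices outside $\{v\}$ have degree at most $4$, yet $G+vw$ contains $K_{2,3}$ as a subgraph with parts $\{v,a\}$ and $\{x,y,w\}$, where $x,y$ are the other two vertices of $a$'s triangle. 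So no case analysis based only on minor-freeness, the degree of $v$ and bounded degrees outside $K$ can establish your claim; extremality must enter the modification itself. The proofs in \cite{C.M}, \cite{M.T} and \cite{M.Q} (and this paper's own arguments in Lemma 2.7 and Section 3) therefore never add $vw$ while keeping everything else: they delete or relocate the edges at $w$ inside $G-K$ (or compare with an explicitly constructed graph) so that minor-freeness is preserved, and then win by comparing Rayleigh quotients through eigenvector-entry estimates. Note finally that you are proving more than the statement asks: Lemma 2.1 only requires each $v_i$ to be adjacent to all of $V\setminus K$, not $d_G(v_i)=n-1$; the paper obtains that strengthening separately (Lemma 2.7), via a spectral comparison with $(K_{s-1}-e)\vee H$, precisely because an edge-addition shortcut of the kind you propose is unavailable.
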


\begin{lem}(Lemma 2.1, \cite{C.M})\label{lem::2.1'}
Let $0<\alpha<1, s \geq 2$, and $n \geq s-1$. If $G=K_{s-1} \vee \overline{K}_{n-s+1}$, then $\rho_\alpha(G) \geq \alpha(n-1)+(1-\alpha)(s-2)$.
\end{lem}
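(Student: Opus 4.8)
The plan is to get the lower bound from one well-chosen test vector in the Rayleigh quotient. Recall that for any graph $H$ one has $\rho_\alpha(H)=\max_{x\neq 0}\frac{x^{\top}A_\alpha(H)x}{x^{\top}x}$, and that for every vector $x$ indexed by $V(H)$,
\[
x^{\top}A_\alpha(H)x=\alpha\sum_{v\in V(H)}d_H(v)x_v^2+2(1-\alpha)\sum_{uv\in E(H)}x_ux_v .
\]
So it suffices to produce one $x$ whose Rayleigh quotient equals $\alpha(n-1)+(1-\alpha)(s-2)$, and no eigenvector analysis is needed.

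Write $G=K_{s-1}\vee\overline{K}_{n-s+1}$ with clique part $U=\{u_1,\dots,u_{s-1}\}$ and independent part $W$ of size $n-s+1$. I would take $x$ to be the characteristic vector of $U$: $x_u=1$ for $u\in U$ and $x_w=0$ for $w\in W$. Then $x^{\top}x=s-1$. Every $u_i$ is joined to the other $s-2$ vertices of $U$ and to all of $W$, so $d_G(u_i)=n-1$ and hence $\sum_v d_G(v)x_v^2=(s-1)(n-1)$; the only edges contributing to $\sum_{uv\in E}x_ux_v$ are the $\binom{s-1}{2}$ edges inside $U$, each contributing $1$. Therefore
\[
x^{\top}A_\alpha(G)x=\alpha(s-1)(n-1)+2(1-\alpha)\binom{s-1}{2}=(s-1)\bigl[\alpha(n-1)+(1-\alpha)(s-2)\bigr],
\]
and dividing by $x^{\top}x=s-1$ gives Rayleigh quotient exactly $\alpha(n-1)+(1-\alpha)(s-2)$, which yields $\rho_\alpha(G)\geq\alpha(n-1)+(1-\alpha)(s-2)$.

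There is essentially no obstacle; the only points of care are the factor $2$ on the edge term and the degree count $n-1$ on the clique vertices. If one wanted a sharper picture, the partition $\{U,W\}$ of $V(G)$ is equitable, so $\rho_\alpha(G)$ is the largest root of the $2\times2$ quotient matrix, and solving its characteristic equation gives $\rho_\alpha(G)=\alpha(n-1)+(1-\alpha)(s-2)+\frac{(1-\alpha)^2(n-s+1)(s-1)}{\rho_\alpha(G)-\alpha(s-1)}$, so the inequality is in fact strict whenever $n>s$. But since the statement only asks for ``$\geq$'', I would present just the one-line test-vector computation above.
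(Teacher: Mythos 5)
Your proof is correct. There is nothing in the paper to compare it against: the lemma is quoted verbatim from Chen, Liu and Zhang \cite{C.M} (their Lemma 2.1) and no proof is reproduced here, so your Rayleigh-quotient argument supplies a proof where the paper simply cites one. The computation checks out: with $x$ the characteristic vector of the clique part $U$, each vertex of $U$ has degree $n-1$, the only edges both of whose endpoints carry weight $1$ are the $\binom{s-1}{2}$ edges inside $K_{s-1}$, and hence
$x^{\top}A_\alpha(G)x=\alpha(s-1)(n-1)+(1-\alpha)(s-1)(s-2)=(s-1)\bigl[\alpha(n-1)+(1-\alpha)(s-2)\bigr]$,
giving the claimed bound after dividing by $x^{\top}x=s-1$; this also handles the boundary cases $s=2$ (where the bound is $\alpha(n-1)$) and $n=s-1$ (where $G=K_{s-1}$ and equality holds), consistent with the hypotheses $s\geq 2$, $n\geq s-1$. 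Your side remark via the equitable partition $\{U,W\}$ is also sound and matches the kind of quotient-matrix computations the paper performs elsewhere (e.g., in Lemma 2.3), and it correctly shows the inequality is strict for $n>s-1$ when $0<\alpha<1$; since only the weak inequality is claimed, the one-vector computation suffices.
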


Recall that $F_{s, t}(n):\cong K_{s-1} \vee(p K_{t}\cup K_{r})$, where $2 \leq s \leq t, n-s+1=p t+r$ and $1\leq r\leq t$.

\begin{lem}\label{lem::2.3}
Let $0 < \alpha <1$, $2 \leq s \leq t$ and $n \geq  s-1+\frac{t^{2}-1}{\alpha}$. Then
$\rho_{\alpha}(F_{s, t}(n))$ is no more than the largest root of $g(x)=0$
and $\rho_{\alpha}(F_{s, t}(n))$ is larger than the largest root of $h(x)=0$, where
\begin{equation*}
\begin{aligned}
h(x)=&x^2-(\alpha n+s+t-3) x+
(\alpha(n-s+1)+s-2)(\alpha(s-1)+t-1)\\&-(1-\alpha)^2(s-1)(n-s),
\end{aligned}
\end{equation*}
and
$
g(x)=h(x)-(1-\alpha)^2(s-1).
$
\end{lem}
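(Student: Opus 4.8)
The plan is to use the equitable $3$-partition that the structure of $F_{s,t}(n)$ naturally supplies, and to reduce the whole estimate to comparing the characteristic polynomial of the resulting $3\times3$ quotient matrix with the quadratics $h$ and $g$. First I would partition $V(F_{s,t}(n))=V_1\cup V_2\cup V_3$, where $V_1=V(K_{s-1})$ is the set of the $s-1$ dominating vertices, $V_2$ is the set of the $pt$ vertices lying inside the $p$ copies of $K_t$, and $V_3=V(K_r)$. Since each vertex of $V_1$ has $s-2$ neighbours in $V_1$ and is adjacent to all of $V_2\cup V_3$, each vertex of $V_2$ has all of $V_1$ and $t-1$ neighbours in its own copy of $K_t$, and each vertex of $V_3$ has all of $V_1$ and $r-1$ neighbours in $V_3$, this partition is equitable for $A_\alpha$. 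Putting $a=\alpha(n-1)+(1-\alpha)(s-2)=\alpha(n-s+1)+s-2$, $d=\alpha(s-1)+t-1$ and $e=\alpha(s-1)+r-1$, the quotient matrix is
\[
B_\alpha=\begin{pmatrix} a & (1-\alpha)pt & (1-\alpha)r\\ (1-\alpha)(s-1) & d & 0\\ (1-\alpha)(s-1) & 0 & e\end{pmatrix},
\]
and because $F_{s,t}(n)$ is connected (so $A_\alpha(F_{s,t}(n))$ is nonnegative and irreducible, with Perron eigenvector constant on the three parts), $\rho:=\rho_\alpha(F_{s,t}(n))$ is the largest root of
\[
\phi(x):=\det(xI-B_\alpha)=(x-a)(x-d)(x-e)-(1-\alpha)^2(s-1)\bigl[pt(x-e)+r(x-d)\bigr].
\]

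Next I would record a crude a priori bound. As $t\ge2$ and $p\ge1$ for $n$ as large as assumed, $K_{s-1}\vee\overline{K}_{n-s+1}$ is a proper spanning subgraph of the connected graph $F_{s,t}(n)$, so Lemmas \ref{lem::2.2} and \ref{lem::2.1'} give $\rho\ge\alpha(n-1)+(1-\alpha)(s-2)=a$. Using $1\le r\le t$ and $n\ge s-1+\frac{t^2-1}{\alpha}$, elementary estimates then give $\rho\ge a>d\ge e$, hence $\rho-d>0$, $\rho-e>0$ and $\rho>\tfrac12(a+d)$. Dividing the equation $\phi(\rho)=0$ by $\rho-e\ne0$ and rearranging yields the key identity
\begin{equation*}
(\rho-a)(\rho-d)=(1-\alpha)^2(s-1)\Bigl(pt+r\cdot\tfrac{\rho-d}{\rho-e}\Bigr).\tag{$\star$}
\end{equation*}

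From $(\star)$ both halves follow quickly. Writing $\tfrac{\rho-d}{\rho-e}=1-\tfrac{t-r}{\rho-e}$ and $pt+r=n-s+1$ in $(\star)$ gives $(\rho-a)(\rho-d)=(1-\alpha)^2(s-1)\bigl(n-s+1-\tfrac{r(t-r)}{\rho-e}\bigr)$, and since $g(x)=(x-a)(x-d)-(1-\alpha)^2(s-1)(n-s+1)$ and $h(x)=(x-a)(x-d)-(1-\alpha)^2(s-1)(n-s)$ we obtain
\[
g(\rho)=-(1-\alpha)^2(s-1)\tfrac{r(t-r)}{\rho-e}\le0\qquad\text{and}\qquad h(\rho)=(1-\alpha)^2(s-1)\Bigl(1-\tfrac{r(t-r)}{\rho-e}\Bigr),
\]
using $\rho-e>0$ and $r(t-r)\ge0$. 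The first relation forces $\rho$ to lie between the (necessarily real) roots of $g$, so $\rho$ is at most the largest root of $g$. As for $h$: its discriminant $(a-d)^2+4(1-\alpha)^2(s-1)(n-s)$ is positive, so $h$ has two real roots whose average is $\tfrac12(a+d)<\rho$; hence the formula for $h(\rho)$ shows that $\rho$ exceeds the larger root of $h$ as soon as $\rho-e>r(t-r)$.

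The hard part — indeed the only step that is not pure bookkeeping — is the inequality $\rho-e>r(t-r)$, and this is exactly where the hypothesis $n\ge s-1+\frac{t^2-1}{\alpha}$ is used. From $\rho\ge a$ one extracts $\rho-e\ge\alpha(n-s)+(1-\alpha)(s-2)-(t-1)>t^2-t-1$, whereas $\max_{1\le r\le t}r(t-r)=\lfloor t^2/4\rfloor\le t^2-t-1$ for every $t\ge2$ (equivalently $(3t+2)(t-2)\ge0$), the extreme case $t=2$ being saved by the strictness of the lower bound for $\rho-e$. Everything else — writing down $B_\alpha$, expanding $\phi$, and manipulating $(\star)$ — is routine algebra.
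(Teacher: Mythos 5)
Your proposal is correct and is essentially the paper's own argument: both reduce $\rho_\alpha(F_{s,t}(n))$ to the characteristic polynomial of the $3\times 3$ symmetric-class (quotient) system, deduce the upper bound from $g(\rho)\le 0$ and the lower bound from $h(\rho)>0$ together with $\rho>\frac{\alpha n+s+t-3}{2}$, and your key inequality $\rho-e>r(t-r)$ is literally the paper's positivity condition $\rho+1-\alpha(s-1)-r(1+t-r)>0$. The only cosmetic difference is that you treat $r=t$ and $r<t$ uniformly via the identity $(\star)$, whereas the paper separates them into two cases.
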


\begin{proof}
Let $\rho_{\alpha}=\rho_{\alpha}(F_{s, t}(n))$ and
$\mathbf{x}$ be a positive eigenvector of $A_{\alpha}(F_{s, t}(n))$ corresponding to $\rho_{\alpha}$. By symmetry and the Perron-Frobenius theorem, all vertices of subgraphs $K_{s-1}$, $p \cdot K_{t}$, or $K_{r}$ in $F_{s, t}(n):=K_{s-1} \vee(p \cdot K_{t} \cup K_{r} )$ have the same eigenvector components respectively, which are denoted by $x_{1}$, $x_{2}$, $x_{3}$, respectively. We consider the following two cases.
	
{\flushleft\bf Case 1. $r=t$.} By $A_{\alpha}(F_{s, t}(n)) \mathbf{x}=\rho_{\alpha} \mathbf{x}$, it is easy to see that
$$
\begin{aligned}
&(\rho_{\alpha}-\alpha(n-1)-(1-\alpha)(s-2)) x_{1} =(1-\alpha)(n-s+1) x_{2}, \\
&(\rho_{\alpha}-\alpha(s+t-2)-(1-\alpha)(t-1))x_{2} =(1-\alpha)(s-1) x_{1}.
\end{aligned}
$$
Then $\rho_{\alpha}$ is the largest root of $g(x)=0$, where
$
g(x)=h(x)-(1-\alpha)^2(s-1).
$
Since $0 <\alpha <1$ and $s \geq2$, we find that $\rho_{\alpha}$ is larger than the largest root of $h(x)=0$.

{\flushleft\bf Case 2. $1 \leq r<t$.} By $A_{\alpha}(F_{s, t}(n))\mathbf{x}=\rho_{\alpha} \mathbf{x}$, it is easy to see that
$$
\begin{aligned}
&(\rho_{\alpha}-\alpha(n-1)-(1-\alpha)(s-2)) x_{1} =(1-\alpha)(n-s-r+1)x_{2}+(1-\alpha)rx_{3}, \\
&(\rho_{\alpha}-\alpha(s+t-2)-(1-\alpha)(t-1))x_{2} =(1-\alpha)(s-1) x_{1},\\
&(\rho_{\alpha}-\alpha(s+r-2)-(1-\alpha)(r-1))x_{3}=(1-\alpha)(s-1) x_{1}.
\end{aligned}
$$
Then $\rho_{\alpha}$ is the largest root of $f(x)=0$, where
$$
f(x)=(x-\alpha(s+r-2)-(1-\alpha)(r-1)) g(x)-r(\alpha-1)^{2}(s-1)(r-t).
$$
Since $\rho_{\alpha}>\rho_{\alpha}\left(K_{s+r-1}\right)=s+r-2$, $0 < \alpha <1$ and $s\geq2$, we have
$$\rho_{\alpha}-\alpha(s+r-2)-(1-\alpha)(r-1)>(s+r-2)-\alpha(s+r-2)-(1-\alpha)(r-1)=(1-\alpha)(s-1)>0.$$
Moreover, since $1 \leq r<t$, we see that
$$
g(\rho_{\alpha})=\frac{r(\alpha-1)^{2}(s-1)(r-t)}{\rho_{\alpha}-\alpha(s+r-2)-(1-\alpha)(r-1)}<0,
$$
which implies that $\rho_{\alpha}$ is less than the the largest root of $g(x)=0$.
	
Moreover, we see that
\begin{equation*}
\begin{aligned}
f(x)=&(x-\alpha(s+r-2)-(1-\alpha)(r-1)) h(x)-
(x + 1 -\alpha (s - 1) - r)(s - 1)(\alpha-1)^2\\&-r(\alpha-1)^{2}(s - 1)(r-t).
\end{aligned}
\end{equation*}
Thus
$$
h(\rho_{\alpha})=\frac{[\rho_{\alpha}+1- \alpha (s-1)- r(1-r+t)](\alpha-1)^{2}(s-1)}{\rho_{\alpha}-\alpha(s+r-2)-(1-\alpha)(r-1)}.
$$
Notice that $G$ contains $K_{s-1} \vee \overline{K}_{n-s+1}$ as a proper subgraph. By Lemma \ref{lem::2.1'}, we have	
\begin{equation*}
\begin{aligned}
\rho_{\alpha}>&\rho_{\alpha}\left(K_{s-1} \vee \overline{K}_{n-s+1}\right)\geq \alpha(n-1)+(1-\alpha)(s-2).
\end{aligned}
\end{equation*}
Recall that $0 < \alpha <1$, $s\geq2$, $1\leq r< t$ and $n \geq  s-1+\frac{t^{2}-1}{\alpha}$. We obtain
\begin{equation*}
\begin{aligned}
\rho_{\alpha}+1-\alpha (s-1)-r(1-r+t)>&\alpha(n-1)+(1-\alpha)(s-2)+1-\alpha (s-1)-(t-1)(1+t)\\
=&\alpha (n-1) + s-1-\alpha(2s-3)-t^{2}+1\\
\geq&\alpha (n-1) -\alpha(s-2)-t^{2}+1\\
\geq&\alpha \left( \left( s-1+\frac{t^{2}-1}{\alpha}\right) -1\right)  -\alpha(s-2)-t^{2}+1\\
=&0.
\end{aligned}
\end{equation*}
Hence, $h(\rho_{\alpha})>0$. Furthermore, since
\begin{equation*}
\begin{aligned}
n&\geq s-1+\frac{t^{2}-1}{\alpha}=2(s-1)+\frac{t^{2}-1-\alpha(s-1)}{\alpha}\geq2(s-1)+\frac{t^{2}-1-(s-1)}{\alpha}\\
&\geq2(s-1)+\frac{t-(s-1)}{\alpha}=2(s-1)+\frac{t-s+1}{\alpha},
\end{aligned}
\end{equation*}
we have
\begin{equation*}
\begin{aligned}
\rho_{\alpha}>&\alpha(n-1)+(1-\alpha)(s-2)
=\frac{\alpha n}{2} +\frac{\alpha n}{2}-\alpha (s-1)+s-2\\
\geq&\frac{\alpha n}{2} +\frac{\alpha}{2}\left(2(s-1)+\frac{t-s+1}{\alpha} \right) -\alpha (s-1)+s-2=\dfrac{\alpha n+s+t-3}{2}.
\end{aligned}
\end{equation*}
It follows that $\rho_{\alpha}$ is larger than the largest root of $h(x)=0$.
\end{proof}

\begin{lem}\label{lem::2.5}
Let $0<\alpha<1$, $s\geq4$, $n\geq s$ and $G\cong (K_{s-1}-e) \vee \overline{K}_{n-s+1}$. Then $\rho_{\alpha}(G)$ is the largest root of $f(x) = 0$, where
\begin{equation*}
\begin{aligned}
f(x)=&x^3 -(2\alpha n + s - 4)x^2 +(\alpha^2n^2 +3\alpha ns - \alpha s^2 - 6\alpha n +\alpha s- ns + s^2 - 2\alpha + n -
4s +\\& 7)x -2\alpha^2 n^2 s + \alpha^2 ns^2 + 2 \alpha^2 n^2 - \alpha^2 n s + \alpha n^2 s - \alpha ns^2 + 2\alpha^2 n - \alpha n^2 + 6\alpha ns -
2\alpha s^2 \\&-9\alpha n + 4\alpha s -2ns + 2s^2 - 2\alpha + 4n - 6s + 4.
\end{aligned}
\end{equation*}	
Moreover, $\rho_{\alpha}(G) > \alpha(n-1)+(1-\alpha)(s-4).$
\end{lem}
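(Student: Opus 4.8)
The idea is to exploit the high symmetry of $G=(K_{s-1}-e)\vee\overline{K}_{n-s+1}$ to reduce the eigenvalue problem to a $3\times3$ matrix. Write the vertex set of the factor $K_{s-1}-e$ as $W\cup\{u,v\}$, where $|W|=s-3$, $G[W]\cong K_{s-3}$, both $u$ and $v$ are adjacent to every vertex of $W$, and $uv\notin E(G)$; put $Z=\overline{K}_{n-s+1}$, so $|Z|=n-s+1$ and every vertex of $W\cup\{u,v\}$ is joined to every vertex of $Z$. Since $s\ge4$ and $n\ge s$, the ordered partition $\Pi=(W,\{u,v\},Z)$ is equitable: the neighbour-count vectors across $(W,\{u,v\},Z)$ are $(s-4,2,n-s+1)$ for a vertex of $W$, $(s-3,0,n-s+1)$ for $u$ or $v$, and $(s-3,2,0)$ for a vertex of $Z$. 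Hence the quotient matrix of $A_\alpha(G)$ with respect to $\Pi$ (diagonal part $\alpha\cdot(\text{degree})+(1-\alpha)\cdot(\text{neighbours in own part})$, off-diagonal part $(1-\alpha)\cdot(\text{neighbour counts})$) is
$$
B=\begin{pmatrix}
\alpha(n-1)+(1-\alpha)(s-4)&2(1-\alpha)&(1-\alpha)(n-s+1)\\
(1-\alpha)(s-3)&\alpha(n-2)&(1-\alpha)(n-s+1)\\
(1-\alpha)(s-3)&2(1-\alpha)&\alpha(s-1)
\end{pmatrix}.
$$
Because $\Pi$ is equitable, the lift to $V(G)$ of the positive Perron eigenvector of $B$ is an eigenvector of $A_\alpha(G)$ for the eigenvalue $\rho(B)$; since $B$ is nonnegative and irreducible (for $n\ge s\ge4$ all its off-diagonal entries are positive), Perron--Frobenius forces $\rho_\alpha(G)=\rho(B)$, the largest eigenvalue of $B$.

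It then remains to identify $\det(xI-B)$ with the displayed cubic $f$. Abbreviating $a_1=\alpha(n-1)+(1-\alpha)(s-4)$, $a_2=\alpha(n-2)$, $a_3=\alpha(s-1)$, $m=(1-\alpha)(n-s+1)$, $p=(1-\alpha)(s-3)$, $q=2(1-\alpha)$, a cofactor expansion along the first row gives
$$
\det(xI-B)=(x-a_1)(x-a_2)(x-a_3)-mq\,(x-a_1)-pq\,(x-a_3)-mp\,(x-a_2)-2mpq,
$$
and inserting the values of $a_1,a_2,a_3,m,p,q$ and collecting powers of $x$ reproduces $f(x)$; as a partial check, $a_1+a_2+a_3=2\alpha n+s-4$ matches the coefficient of $x^2$ in $f$. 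Therefore $\rho_\alpha(G)$ is the largest root of $f(x)=0$.

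For the strict lower bound, observe that $K_{s-3}\vee\overline{K}_{n-s+3}$ is a proper connected spanning subgraph of $G$: take $W$ as the clique and $\{u,v\}\cup Z$ as the independent part, so all of its edges occur in $G$, whereas the edges between $\{u,v\}$ and $Z$ occur in $G$ but not in the subgraph; connectedness follows from $s-3\ge1$. By Lemma \ref{lem::2.2}, $\rho_\alpha(G)>\rho_\alpha(K_{s-3}\vee\overline{K}_{n-s+3})$, and by Lemma \ref{lem::2.1'} (applied with $s-2$ and $n$ in place of $s$ and $n$, which is legitimate since $s-2\ge2$ and $n\ge s-3$) we have $\rho_\alpha(K_{s-3}\vee\overline{K}_{n-s+3})\ge\alpha(n-1)+(1-\alpha)(s-4)$, which finishes the proof. (Equivalently, since $B$ is irreducible its spectral radius strictly exceeds that of the $1\times1$ principal submatrix $(a_1)$, giving the same estimate.)

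The only nontrivial labour is the polynomial identity $\det(xI-B)=f(x)$, which is routine but lengthy; this is where I would expect the main effort. The remaining ingredients --- equitability of $\Pi$, the Perron--Frobenius reduction to $B$, and the subgraph comparison --- are short and structural.
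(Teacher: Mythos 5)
Your derivation of the cubic is, in substance, the same as the paper's: the paper exploits the symmetry of the Perron vector (equal entries on the $s-3$ dominating vertices, on $w_1,w_2$, and on the independent part) and writes three eigen-equations, which is exactly your equitable-partition quotient matrix $B$ in different language; your matrix $B$ and the resulting determinant $\det(xI-B)=(x-a_1)(x-a_2)(x-a_3)-mq(x-a_1)-pq(x-a_3)-mp(x-a_2)-2mpq$ agree with the paper's system, and a numerical spot check confirms it expands to the stated $f$. Where you genuinely diverge is the ``moreover'' bound $\rho_\alpha(G)>\alpha(n-1)+(1-\alpha)(s-4)$: the paper proves it by evaluating the cubic at that point and verifying $f(\alpha(n-1)+(1-\alpha)(s-4))=((\alpha(s-3)-s)(n-s+3)+8)(s-3)(\alpha-1)^2<0$ for $s\ge4$, $n\ge s$, so the largest root lies to the right; you instead argue structurally, either via the proper spanning subgraph $K_{s-3}\vee\overline{K}_{n-s+3}$ together with Lemma \ref{lem::2.2} and Lemma \ref{lem::2.1'} (with $s-2$ in place of $s$, legitimate since $s\ge4$ and $n\ge s$), or via the Perron--Frobenius fact that the spectral radius of the irreducible matrix $B$ strictly exceeds that of its $1\times1$ principal submatrix $(a_1)$. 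Both of your variants are valid and avoid the paper's algebraic evaluation of $f$, at the cost of invoking the two auxiliary lemmas (or the principal-submatrix inequality); the paper's computation is self-contained but purely algebraic. The only labour you defer --- expanding $\det(xI-B)$ and matching all coefficients of $f$, of which you check only the $x^2$ term --- is the same computation the paper also leaves implicit, so this is not a gap.
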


\begin{proof}
Firstly, denote by $\{v_{1}, v_{2}, \cdots, v_{s-3}, w_{1}, w_{2}\}$ the  vertex set of $K_{s-1}-e$ in the representation $G:\cong (K_{s-1}-e) \vee \overline{K}_{n-s+1}$, where $d_{G}(v_{i})=n-1$ for $i\in \{1,2,\dots,s-3\}$ and $d_{G}(w_{1})=d_{G}(w_{2})=n-2$. Set for short $\rho_{\alpha}=\rho_{\alpha}(G)$ and let $\mathbf{x}=\left(x_{v}\right)_{v \in V(G)}$ be the Perron vector of $A_{\alpha}(G)$ with respect to $\rho_{\alpha}$. By symmetry, we have $x_{v_{1}}=\cdots=x_{v_{s-3}}$ and  $x_{w_{1}}=x_{w_{2}}$. Additionally, $x_{z}=x_{z_{1}}$ for any two vertices
$z, z_1\in V(G)\backslash \{v_{1}, v_{2}, \cdots, v_{s-3}, w_{1}, w_{2}\}$. By eigen-equations of $A_{\alpha}(G)$ on $v_{1}$, $w_{1}$ and $z_{1}$, we have
$$
\begin{aligned}
&(\rho_{\alpha}-\alpha(n-1)-(1-\alpha)(s-4)) x_{v_{1}} =2(1-\alpha)x_{w_{1}}+(1-\alpha)(n-s+1)x_{z_{1}}, \\
&(\rho_{\alpha}-\alpha(n-2))x_{w_{1}} =(1-\alpha)(s-3) x_{v_{1}}+(1-\alpha)(n-s+1)x_{z_{1}},\\
&(\rho_{\alpha}-\alpha(s-1))x_{z_{1}}=(1-\alpha)(s-3) x_{v_{1}}+2(1-\alpha)x_{w_{1}}.
\end{aligned}
$$
Then $\rho_{\alpha}$ is the largest real root of $f(x) = 0$, where
\begin{equation*}
\begin{aligned}
f(x)=&x^3 -(2\alpha n + s - 4)x^2 +(\alpha^2n^2 + 3\alpha ns - \alpha s^2 - 6\alpha n +\alpha s - ns + s^2
- 2\alpha + n -  4s \\
&+ 7)x -2\alpha^2 n^2 s + \alpha^2 ns^2 + 2 \alpha^2 n^2 - \alpha^2 n s + \alpha n^2 s - \alpha ns^2 + 2\alpha^2 n - \alpha n^2 + 6\alpha ns -\\
&2\alpha s^2 - 9\alpha n + 4\alpha s -2ns + 2s^2 - 2\alpha + 4n - 6s + 4.
\end{aligned}
\end{equation*}
Since $0 < \alpha < 1$, $s\geq4$ and $n \geq s$, we have
\begin{equation*}
\begin{aligned}
(\alpha(s-3)-s)(n-s+3) + 8\leq&((s-3)-s)(n-s+3)  + 8
=-3(n-s+3) + 8\\
\leq&-3(s-s+3) + 8
<0,
\end{aligned}
\end{equation*}
and so
$f(\alpha(n-1)+(1-\alpha)(s-4))=((\alpha(s-3)-s)(n-s+3) + 8)(s - 3)(\alpha - 1)^2<0,$
which implies that $\rho_{\alpha} >\alpha(n-1)+(1-\alpha)(s-4).$

This completes the proof.
\end{proof}

\begin{lem}\label{lem::2.6}
Let $0< \alpha <1$, $t\geq 2$, $s\geq3$, $C\geq\frac{1}{\alpha}$ and
$$n \geq \max\{ 2s-3+\frac{t-s+4}{\alpha}, \frac{(1-\alpha)(s-1)(C(s+t)+2)}{2}+s+t \}.$$ Suppose that $H$ is a graph of order $n-s+1$ and $G\cong (K_{s-1}-e) \vee H$, particularly, defined by $G^{\prime}=G$ when $H$ is $(t-1)$-regular. If $\Delta(H) \leq t-1$, then $\rho_{\alpha}(G) \leq \rho_{\alpha}(G^{\prime})$, eauality holds if and only if $G\cong G^{\prime}$,
moreover, $\rho_{\alpha}(G^{\prime})$ is less than the largest root of $h(x)=0$, where
\begin{equation*}
\begin{aligned}
h(x)=&x^2-(\alpha n+s+t-3) x+
(\alpha(n-s+1)+s-2)(\alpha(s-1)+t-1)\\&-(1-\alpha)^2(s-1)(n-s).
\end{aligned}
\end{equation*}
\end{lem}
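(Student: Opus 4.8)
The plan is to establish the two assertions separately: the comparison $\rho_{\alpha}(G)\le\rho_{\alpha}(G')$ together with its equality case, and then the absolute bound $\rho_{\alpha}(G')<$ (largest root of $h$).

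\emph{Step 1: $\rho_{\alpha}(G)\le\rho_{\alpha}(G')$.} Since $\Delta(H)\le t-1$, the graph $H$ can be completed to a $(t-1)$-regular graph $\widetilde H$ on the same vertex set $V(H)$ by \emph{only adding edges}; for $n$ as large as assumed this is a routine degree-constrained-subgraph fact about $\overline H$ (the sole genuine obstruction being the parity of $(t-1)(n-s+1)$, handled by a near-regular completion carrying an extra $O(1)$ term when that product is odd). Put $\widetilde G:=(K_{s-1}-e)\vee\widetilde H$. Because the partition of $V(\widetilde G)$ into the $s-3$ dominating vertices of $K_{s-1}-e$, the pair $\{w_{1},w_{2}\}$ of nonadjacent vertices, and $V(\widetilde H)$ is equitable for $A_{\alpha}(\widetilde G)$ exactly when $\widetilde H$ is $(t-1)$-regular, the value $\rho_{\alpha}(\widetilde G)$ does not depend on the choice of $(t-1)$-regular $\widetilde H$, so $\rho_{\alpha}(\widetilde G)=\rho_{\alpha}(G')$. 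Now let $\mathbf x>0$ be the Perron vector of $G$. As $E(G)\subseteq E(\widetilde G)$ with all new edges inside $V(H)$, the matrix $A_{\alpha}(\widetilde G)-A_{\alpha}(G)=\alpha(D(\widetilde G)-D(G))+(1-\alpha)(A(\widetilde G)-A(G))$ is entrywise nonnegative, whence $\langle A_{\alpha}(\widetilde G)\mathbf x,\mathbf x\rangle\ge\langle A_{\alpha}(G)\mathbf x,\mathbf x\rangle=\rho_{\alpha}(G)\|\mathbf x\|^{2}$, and Rayleigh's principle gives $\rho_{\alpha}(G')=\rho_{\alpha}(\widetilde G)\ge\rho_{\alpha}(G)$. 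If equality holds, $\mathbf x$ is a Perron vector of $\widetilde G$ as well, so $(A_{\alpha}(\widetilde G)-A_{\alpha}(G))\mathbf x=0$; since this matrix is a nonnegative diagonal part plus a nonnegative symmetric off-diagonal part and $\mathbf x>0$, both vanish, i.e.\ $\widetilde G=G$, i.e.\ $H$ is already $(t-1)$-regular and $G\cong G'$.

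\emph{Step 2: $\rho_{\alpha}(G')<$ largest root of $h$.} Since $H'$ is $(t-1)$-regular, the three-class partition above is equitable for $A_{\alpha}(G')$, so $\rho:=\rho_{\alpha}(G')$ is the largest root of $\det(xI-B)=0$ for the $3\times3$ quotient matrix $B$, with Perron vector $\mathbf z>0$ constant on the classes with values $z_{v},z_{w},z_{H}$ (where $z_{w_{1}}=z_{w_{2}}=z_{w}$). I would compare $G'$ with $\widehat G:=K_{s-1}\vee H'=G'+w_{1}w_{2}$. Its two-class equitable partition ($K_{s-1}$ versus $V(H')$) gives that $\rho_{\alpha}(\widehat G)$ is the larger root $R_{g}$ of $g(x)=h(x)-(1-\alpha)^{2}(s-1)$ (the polynomial $g$ of Lemma \ref{lem::2.3}), and since $h$ is an upward parabola with roots $R_{-}<R$, from $h(R_{g})=(1-\alpha)^{2}(s-1)$ one gets $R_{g}-R=\frac{(1-\alpha)^{2}(s-1)}{R_{g}-R_{-}}\le\frac{(1-\alpha)^{2}(s-1)}{R-R_{-}}$ with $R-R_{-}=\sqrt{(\theta_{1}-\theta_{2})^{2}+4(1-\alpha)^{2}(s-1)(n-s)}=\Theta(\alpha n)$ (here $\theta_{1}=\alpha(n-s+1)+s-2$, $\theta_{2}=\alpha(s-1)+t-1$); thus $R_{g}-R$ is small. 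Next, testing $\mathbf z$ against $A_{\alpha}(\widehat G)$ and observing that the extra edge $w_{1}w_{2}$ contributes $\alpha(z_{w_{1}}^{2}+z_{w_{2}}^{2})+2(1-\alpha)z_{w_{1}}z_{w_{2}}=2z_{w}^{2}$ to the numerator yields $R_{g}=\rho_{\alpha}(\widehat G)\ge\rho+\frac{2z_{w}^{2}}{\|\mathbf z\|^{2}}$, i.e.\ $\rho\le R_{g}-\frac{2z_{w}^{2}}{\|\mathbf z\|^{2}}$. Finally one bounds $\frac{2z_{w}^{2}}{\|\mathbf z\|^{2}}$ below by a positive constant: the equations $B\mathbf z=\rho\mathbf z$ collapse to the clean identities $(\rho-\alpha n+1)z_{v}=(\rho-\alpha n+2)z_{w}$ and $(\rho-\theta_{2})z_{H}=(1-\alpha)((s-3)z_{v}+2z_{w})$, so using the lower bound $\rho>\rho_{\alpha}((K_{s-1}-e)\vee\overline K_{n-s+1})>\alpha(n-1)+(1-\alpha)(s-4)$ (Lemmas \ref{lem::2.2} and \ref{lem::2.5}; for $s=3$ the partition has only two classes and one argues directly) one obtains $z_{v}/z_{w}$ bounded by an explicit constant and $z_{H}/z_{w}=O(1/n)$, hence $\|\mathbf z\|^{2}=z_{w}^{2}\big((s-3)(z_{v}/z_{w})^{2}+2+(n-s+1)(z_{H}/z_{w})^{2}\big)\le\Lambda z_{w}^{2}$ for an explicit constant $\Lambda$. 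Combining, $\rho\le R+(R_{g}-R)-\frac{2}{\Lambda}<R$ once $n$ is large enough that $R_{g}-R<\frac{2}{\Lambda}$: the first hypothesis $n\ge 2s-3+\frac{t-s+4}{\alpha}$ is exactly what keeps $\rho-\theta_{2}$ positive and bounded away from $0$ (so the denominators above behave), and the second hypothesis, with the auxiliary constant $C\ge 1/\alpha$, is the explicit threshold making $R_{g}-R<\frac{2}{\Lambda}$ and the estimate $(n-s+1)(z_{H}/z_{w})^{2}\le C(s+t)+2$ hold.

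\emph{Main obstacle.} Step 1 is bookkeeping once the regular completion is available, the only delicate point being the parity caveat. The real work is Step 2. Although the final gap $R-\rho$ is $\Theta_{s}(1)$ (of size about $\frac{2}{s-1}$) and not razor thin, the argument cannot be routed through the extremal graph $F_{s,t}(n)$, whose $\alpha$-index exceeds $R$ only by $O(1/n)$; one must extract the constant-sized gain from deleting $w_{1}w_{2}$ directly and balance it against the $O(1/n)$ slack $R_{g}-R$ and the remaining $O(1)$ error terms, and making every threshold explicit is precisely what forces the two stated lower bounds on $n$ and the constant $C$. I would carry it out in the order: (i) the eigen-identities controlling the ratios $z_{v}:z_{w}:z_{H}$; (ii) the comparison $\rho\le R_{g}-\frac{2z_{w}^{2}}{\|\mathbf z\|^{2}}$ together with the estimate $R_{g}-R\le\frac{(1-\alpha)^{2}(s-1)}{R-R_{-}}$; (iii) one combination of these using the hypotheses on $n$.
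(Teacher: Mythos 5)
Your route is genuinely different from the paper's. The paper never constructs a regular comparison graph: it takes the Perron vector of $G=(K_{s-1}-e)\vee H$ itself, writes eigen-inequalities at $v_1$, $w_1$ and at a vertex of $H$ of maximum entry, multiplies them to get a cubic (a quadratic when $s=3$) that $\rho_{\alpha}(G)$ must satisfy, with equality forcing $H$ to be $(t-1)$-regular, and then compares that polynomial with $h$ by polynomial division, using the two hypotheses on $n$ to make every constant explicit. Your Step 2 mechanism is sound and in fact rather clean: the identity $(\rho-\alpha n+1)z_{v}=(\rho-\alpha n+2)z_{w}$ is correct, the Rayleigh gain $2z_{w}^{2}/\|\mathbf z\|^{2}$ from adding $w_{1}w_{2}$ is computed correctly, $\rho_{\alpha}(K_{s-1}\vee H')$ is indeed the largest root of $g=h-(1-\alpha)^{2}(s-1)$ (the Case 1 computation of Lemma \ref{lem::2.3}), and $R_{g}-R\le\frac{(1-\alpha)^{2}(s-1)}{R-R_{-}}=O(1/(\alpha n))$ is right; so for fixed $\alpha,s,t$ and $n$ large your comparison does give $\rho_{\alpha}(G')<R$.

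The genuine gap is in Step 1, and it is not merely the "delicate point" you flag. Your whole architecture requires completing $H$ to a $(t-1)$-regular graph on the same $n-s+1$ vertices. When $(t-1)(n-s+1)$ is odd no $(t-1)$-regular graph of that order exists at all, so neither $G'$ nor $\widetilde H$ exists; the "near-regular completion" patch does not rescue the argument as written, because Step 2 rests precisely on the equitable three-class partition and on the identification $\rho_{\alpha}(\widetilde G)=\rho_{\alpha}(G')$, both of which fail for a near-regular $\widetilde H$. In that parity case your proof yields no bound on $\rho_{\alpha}(G)$ at all, whereas the bound on $\rho_{\alpha}(G)$ for an arbitrary $H$ with $\Delta(H)\le t-1$ is exactly what Lemma \ref{lem::2.7} extracts from this lemma; the paper's direct argument gives $h(\rho_{\alpha}(G))<0$ for every admissible $H$, parity or not. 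Even in the even-parity case, the completion is an $f$-factor statement about $\overline H$ that you assert rather than prove (true for large $n$, but an extra ingredient the paper does not need). A second, lesser gap: the lemma carries explicit thresholds ($n\ge 2s-3+\frac{t-s+4}{\alpha}$ and $n\ge\frac{(1-\alpha)(s-1)(C(s+t)+2)}{2}+s+t$ with $C\ge\frac{1}{\alpha}$), while your estimates only deliver "for $n$ sufficiently large"; the claim that the stated hypotheses are "exactly" what makes $R_{g}-R<\frac{2}{\Lambda}$ is asserted, not derived (note $z_{v}/z_{w}=1+\frac{1}{\rho-\alpha n+1}$ makes your $\Lambda$ degenerate as $\alpha\to1$ when $s=4$, so the bookkeeping is not automatic), and the $s=3$ case of Step 2 is only gestured at. To make this a proof you would need a genuine treatment of the parity-obstructed case — for instance bounding $\rho_{\alpha}((K_{s-1}-e)\vee H)$ directly, which is in effect the paper's method — together with the constant tracking against the stated hypotheses.
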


\begin{proof}
Denote by $\{v_{1}, v_{2}, \cdots, v_{s-3}, w_{1}, w_{2}\}$ the  vertex set of $K_{s-1}-e$ in the representation $G:\cong (K_{s-1}-e) \vee H$, where $d_{G}(v_{i})=n-1$ for $i\in \{1,2,\dots,s-3\}$ and $d_{G}(w_{1})=d_{G}(w_{2})=n-2$. Set for short $\rho_{\alpha}=\rho_{\alpha}(G)$ and let
$\mathbf{x}=\left(x_{v}\right)_{v \in V(G)}$ be the Perron vector of $A_{\alpha}(G)$ with respect to $\rho_{\alpha}$. Clearly, $x_{v_{1}}=\cdots=x_{v_{s-3}}$ and $x_{w_{1}}=x_{w_{2}}$ by the symmetry. Choose a vertex $z_{1} \in V(H)$ such that
$$x_{z_{1}}=\max _{v \in V(H)} x_{v}.$$
Since $\Delta(H) \leq t-1$, we have $d_{G}(z_{1})=d_{H}(z_{1})+s-1\leq s+t-2$.
	
{\flushleft\bf Case 1. $s=3$.}
Then
$
h(x)=x^2-(\alpha n+t) x+(\alpha(n-2)+1)(2\alpha+t-1)-2(1-\alpha)^2(n-3)$.
By eigenequations of $A_{\alpha}(G)$ on $w_{1}$ and $z_{1}$, we have
$$
\begin{aligned}
\rho_{\alpha}x_{w_{1}}&=\alpha(n-2)x_{w_{1}}+(1-\alpha)\sum_{v \in V(H)} x_{v}\leq\alpha(n-2)x_{w_{1}}+(1-\alpha)(n-2)x_{z_{1}},\\
\rho_{\alpha}x_{z_{1}}&=\alpha d_{G}(z_{1})x_{z_{1}}+2(1-\alpha)x_{w_{1}}+(1-\alpha)\sum_{v \in N_{H}(z_{1})} x_{v}
\\&\leq ((t+1)\alpha+(1-\alpha)(t-1) )x_{z_{1}}+2(1-\alpha)x_{w_{1}},
\end{aligned}
$$
and thus
\begin{equation}\label{equ::2_}
(\rho_{\alpha}-\alpha(n-2)) x_{w_{1}} \leq (1-\alpha)(n-2)x_{z_{1}},
\end{equation}
\begin{equation}\label{equ::3_}
(\rho_{\alpha}-(t+1)\alpha-(1-\alpha)(t-1) ) x_{z_{1}} \leq 2(1-\alpha)x_{w_{1}}.
\end{equation}
Notice that $n \geq 2s-3+\frac{t-s+4}{\alpha}=3+\frac{t+1}{\alpha}$. Then we obtain
\begin{equation}\label{equ::2-}
\begin{aligned}
\rho_{\alpha}&\geq\rho_{\alpha}(K_{2,n-2})=\dfrac{\alpha n+\sqrt{(\alpha n)^2+8(n-2)(1-2\alpha)}}{2}
=\dfrac{\alpha n+\sqrt{(\alpha (n-4))^2+8(\alpha - 1)^2 (n - 2)}}{2}\\&
>\dfrac{\alpha n+\alpha (n-4)}{2}
=\alpha (n-2)
\geq\alpha \left( \left( 3+\frac{t+1}{\alpha}\right) -2\right)
>(t+1)\alpha+(1-\alpha)(t-1).
\end{aligned}
\end{equation}
Now, we can multiply (\ref{equ::2_})-(\ref{equ::3_}), obtaining
$$
\rho_{\alpha}^2 -(\alpha n + t - 1)\rho_{\alpha} +\alpha nt + 3 \alpha n - 2 \alpha t - 6\alpha - 2n + 4 \leq 0.
$$
This implies that $\rho_{\alpha}$ is no more than the largest root of $g(x)=0$, where
$$g(x)=x^2 -(\alpha n + t - 1)x +\alpha nt + 3 \alpha n - 2 \alpha t - 6\alpha - 2n + 4 .$$
If $\rho_{\alpha}$ is equal to the largest real root of $g(x)=0$, then equalities in (\ref{equ::2_})-(\ref{equ::3_}) hold. Therefore, for any vertex $z \in V(H)$, we have $x_{z}=x_{z_{1}}$ and
$$
\begin{aligned}
\rho_{\alpha}-\alpha d_{G}(z)x_{z}
&=2(1-\alpha)x_{w_{1}}+(1-\alpha)\sum_{v \in N_{H}(z)} x_{v}\\
&\leq 2(1-\alpha)x_{w_{1}}+(1-\alpha)(t-1)x_{z_{1}}=\rho_{\alpha}-\alpha d_{G}(z_{1})x_{z_{1}},
\end{aligned}
$$
which implies $d_{G}(z)=s+t-2$, that is, $d_{H}(z)=t-1$. Hence, $H$ is a $(t-1)$-regular graph, and so $G\cong G^{\prime}$.
Furthermore, we see that $g(x)=h(x)+x-(t+1-2\alpha(1-\alpha))$. By (\ref{equ::2-}), we have
$\rho_{\alpha}>\alpha \left( \left( 3+\frac{t+1}{\alpha}\right) -2\right)=t+1+\alpha.$
Thus,
$h(\rho_{\alpha})=-(\rho_{\alpha}-(t+1-2\alpha(1-\alpha)))\leq-(\rho_{\alpha}-(t+1))<0$.
Therefore, $\rho_{\alpha}$ is less than the largest root of $h(x)=0$.
	
{\flushleft\bf Case 2. $s\geq4$.}
By eigenequations of $A_{\alpha}(G)$ on $v_{1}$, $w_{1}$ and $z_{1}$, we have
\begin{equation*}\begin{array}{ll}
(\rho_{\alpha}-\alpha(n-1)-(1-\alpha)(s-4))x_{v_{1}}
&=2(1-\alpha)x_{w_{1}}+(1-\alpha)\sum_{v \in V(H)} x_{v}\\
&\leq2(1-\alpha)x_{w_{1}}+(1-\alpha)(n-s+1)x_{z_{1}},
\end{array}
\end{equation*}
\begin{equation*}\begin{array}{ll}
(\rho_{\alpha}-\alpha(n-2))x_{w_{1}}
&=(1-\alpha)(s-3) x_{v_{1}}+(1-\alpha)\sum_{v \in V(H)} x_{v}\\
&\leq(1-\alpha)(s-3) x_{v_{1}}+(1-\alpha)(n-s+1)x_{z_{1}},
\end{array}
\end{equation*}
\begin{equation*}\begin{array}{ll}
(\rho_{\alpha}-\alpha(s+t-2))x_{z_{1}}
&\leq (\rho_{\alpha}-\alpha d_{G}(z_{1})x_{z_{1}}\\
&=(1-\alpha)(s-3) x_{v_{1}}+2(1-\alpha)x_{w_{1}}+(1-\alpha)\sum_{v \in N_{H}(z_{1})} x_{v}\\
&\leq(1-\alpha)(s-3) x_{v_{1}}+2(1-\alpha)x_{w_{1}}+(1-\alpha)(t-1)x_{z_{1}},
\end{array}
\end{equation*}

that is,
\begin{equation}\label{equ::1}
(\rho_{\alpha}-\alpha(n-1)-(1-\alpha)(s-4)) x_{v_{1}} \leq2(1-\alpha)x_{w_{1}}+(1-\alpha)(n-s+1)x_{z_{1}},
\end{equation}
\begin{equation}\label{equ::2}
(\rho_{\alpha}-\alpha(n-2)) x_{w_{1}} \leq (1-\alpha)(s-3) x_{v_{1}}+(1-\alpha)(n-s+1)x_{z_{1}},
\end{equation}
\begin{equation}\label{equ::3}
(\rho_{\alpha}-\alpha(s+t-2)-(1-\alpha)(t-1)) x_{z_{1}} \leq (1-\alpha)(s-3) x_{v_{1}}+2(1-\alpha)x_{w_{1}}.
\end{equation}
Notice that and $G$ contains $(K_{s-1}-e) \vee \overline{K}_{n-s+1}$ as a subgraph. By Lemma \ref{lem::2.5}, we have
$$
\rho_{\alpha}\geq \rho_{\alpha}\left((K_{s-1}-e) \vee \overline{K}_{n-s+1} \right)>\alpha(n-1)+(1-\alpha)(s-4).
$$
Since $0<\alpha<1$ and $s\geq4$, we have
\begin{equation}\label{equ::4-}
\begin{aligned}
\rho_{\alpha}\geq \alpha(n-1)+(1-\alpha)(s-4)>\alpha(n-2).
\end{aligned}
\end{equation}
Recall that $n \geq  2s-3+\frac{t-s+4}{\alpha}$. Hence, we get that
\begin{equation}\label{equ::4--}
\begin{aligned}
\rho_{\alpha}\geq \alpha(n-1)+(1-\alpha)(s-4)>\alpha(s+t-2)+(1-\alpha)(t-1).
\end{aligned}
\end{equation}
Let $A=1$; $B=-(2\alpha n + s + t - 5)$; $C=a^2 n^2 + 3\alpha ns + 2\alpha nt - \alpha s^2 - \alpha st - 8\alpha n + 2\alpha s + \alpha t - ns + s^2 + st - 3\alpha + n - 5s - 4t + 11$;
$D= 10 + 4n - 6t - 8s - 6\alpha + 2s^2 + 7\alpha ns - 2\alpha^2 ns - 2\alpha^2 n^2 s + 4\alpha nt - 2\alpha st -\alpha n^2+ 4\alpha t + 3\alpha^2 n + 3\alpha^2 n^2 - 2\alpha s^2 + 6\alpha s - 2ns - 13\alpha n + 2st + \alpha n^2s + \alpha^2 ns^2 - \alpha ns^2 - \alpha^2 n^2 t + \alpha^2 nst - \alpha nst - \alpha^2 nt$.
Now, we can multiply (\ref{equ::1})-(\ref{equ::3}), obtaining
$A\rho_{\alpha}^{3}+B\rho_{\alpha}^{2}+C\rho_{\alpha}+D \leq 0$.
This implies that $\rho_{\alpha}$ is no more than the largest  root of $f(x)=0$, where
$$
f(x)=Ax^{3}+Bx^{2}+Cx+D.
$$
If $\rho_{\alpha}$ is equal to the largest real root of $f(x)=0$, then equalities   in (\ref{equ::1})-(\ref{equ::3}) hold. Therefore, for any vertex $z \in V(H)$, we have $x_{z}=x_{z_{1}}$ and
$$
\begin{aligned}
\rho_{\alpha}-\alpha d_{G}(z)x_{z}
&=(1-\alpha)(s-3) x_{v_{1}}+2(1-\alpha)x_{w_{1}}+(1-\alpha)\sum_{v \in N_{H}(z)} x_{v}\\
&\leq(1-\alpha)(s-3) x_{v_{1}}+2(1-\alpha)x_{w_{1}}+(1-\alpha)(t-1)x_{z_{1}}=\rho_{\alpha}-\alpha d_{G}(z_{1})x_{z_{1}},
\end{aligned}
$$
which implies   $d_{G}(z)=s+t-2$, that is, $d_{H}(z)=t-1$. Hence, $H$ is a $(t-1)$-regular graph, and so $G\cong G^{\prime}$. Moreover, let
\begin{equation*}
\begin{aligned}
g_{1}(x)=&x^2-(\alpha n+s+t-3) x+ (\alpha(n-s+1)+s-2)(\alpha(s-1)+t-1)\\&-(1-\alpha)^2(s-1)(n-s+1).
\end{aligned}
\end{equation*}
we find that
$$
f(x)=(x-(\alpha n-2))g_{1}(x)+2(1-\alpha)(x-((\alpha-1) n  + s + t - 2)).
$$
Note that $0<\alpha<1$. By (\ref{equ::4-}) we have $\rho_{\alpha}>\alpha(n-2)\geq\alpha n-2,$
and by (\ref{equ::4--}) we get that
$$
\begin{aligned}
\rho_{\alpha}-((\alpha-1) n  + s + t - 2)
&>\alpha(s+t-2)+(1-\alpha)(t-1)-((\alpha-1) n  + s + t - 2)\\
&=(1-\alpha)(n-s+1)>0.
\end{aligned}
$$
It follows that
$$
h(\rho_{\alpha})=-\dfrac{2(1-\alpha)(\rho_{\alpha}-((\alpha-1) n  + s + t - 2))}{\rho_{\alpha}-(\alpha n-2)}<0,
$$
Therefore, $\rho_{\alpha}$ is less than the largest root of $g_{1}(x)=0$. That is,
$
\rho_{\alpha}<\dfrac{\alpha n+s+t-3+\sqrt{R}}{2},
$
where
$$
\begin{aligned}
R=&(\alpha n+s+t-3)^{2}-4\left( (\alpha(n-s+1)+s-2)(\alpha(s-1)+t-1)-(1-\alpha)^2(s-1)(n-s+1)\right)\\
=&(\alpha n +(2C-1)(s + t) + 3)^2-4(C(C-1)(s^2 + 2st+t^2)+(C\alpha-1)ns + C\alpha nt  +\alpha ns +\\
&3s(C-1) +(3C-2)t + \alpha (2s + t)  +(1-\alpha)(t+s)s + n +(3-\alpha))\\
<&(\alpha n +(2C-1)(s + t) + 3)^2,
\end{aligned}
$$
since $0<\alpha<1$ and $C\geq\frac{1}{\alpha}$.
Hence, we have
$$
\begin{aligned}
\rho_{\alpha}<\dfrac{\alpha n+s+t-3+\alpha n +(2C-1)(s + t) + 3}{2}=\alpha n+C(s+t).
\end{aligned}
$$
Now we see that
$$
f(x)=(x-(\alpha n-2))h(x)+2(1-\alpha)(x-((\alpha-1) n  + s + t - 2))-(x-(\alpha n-2))(1-\alpha)^{2}(s-1).
$$
Note that $n\geq\frac{(1-\alpha)(s-1)(C(s+t)+2)}{2}+s+t$. Then we obtain
$$
\begin{aligned}
&2(1-\alpha)(\rho_{\alpha}-((\alpha-1) n  + s + t - 2))-(\rho_{\alpha}-(\alpha n-2))(1-\alpha)^{2}(s-1)\\
=&(1-\alpha)\left( 2(n-s-t)+(\rho_{\alpha}-(\alpha n-2))\left( -(1-\alpha)(s-1)+2\right) \right) \\
>&(1-\alpha)\left( 2(n-s-t)-(1-\alpha)(s-1)(\rho_{\alpha}-(\alpha n-2)) \right) \\
\geq&(1-\alpha)\left( 2(n-s-t)-(1-\alpha)(s-1)((\alpha n+C(s+t))-(\alpha n-2)) \right) \\
=&(1-\alpha)\left( 2(n-s-t)-(1-\alpha)(s-1)(C(s+t)+2) \right) \\
\geq&(1-\alpha)\left( 2\left( \frac{(1-\alpha)(s-1)(C(s+t)+2)}{2}+s+t-s-t\right) -(1-\alpha)(s-1)(C(s+t)+2) \right) \\
=&0.
\end{aligned}
$$
It follows that
$$
h(\rho_{\alpha})=-\dfrac{2(1-\alpha)(x-((\alpha-1) n  + s + t - 2))-(x-(\alpha n-2))(1-\alpha)^{2}(s-1)}{\rho_{\alpha}-(\alpha n-2)}<0,
$$
Therefore, $\rho_{\alpha}$ is less than the largest root of $h(x)=0$.
\end{proof}

\begin{lem}\label{lem::2.7}
Let $0<\alpha <1$, $2 \leq s \leq t$ and $G$ be a $K_{s,t}$-minor free graph of sufficiently large order $n$ with maximum $A_\alpha$-spectral radius. Then $G$ contains a vertex set $K=\left\{v_{1}, v_{2}, \ldots, v_{s-1}\right\}$ such that $d_{G}\left(v_{i}\right)=n-1$ for $i \in\{1,2, \ldots, s-1\}$.
\end{lem}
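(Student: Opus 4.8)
The plan is to build on Lemma~\ref{lem::2.1}, which already produces a set $K=\{v_1,\dots,v_{s-1}\}$ with $d_{G-K}(v_i)=n-s+1$, i.e.\ every $v_i$ is adjacent to all of $V(G)\setminus K$. Writing $H:=G-K$, this means $G=G[K]\vee H$, so the whole statement reduces to showing $G[K]\cong K_{s-1}$; granting that, $d_G(v_i)=(s-2)+(n-s+1)=n-1$ for each $i$. When $s=2$ the set $K$ is a single vertex and $d_G(v_1)=d_{G-K}(v_1)=n-1$ already, so from now on I would assume $s\geq3$ and argue by contradiction: if $G[K]$ omits an edge, then $G$ is a spanning subgraph of $(K_{s-1}-e)\vee H$.

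The first key step is to show that $H$ is $K_{1,t}$-minor free, hence $\Delta(H)\leq t-1$. Suppose $H$ had disjoint connected branch sets $B_0,B_1,\dots,B_t$ forming a $K_{1,t}$-minor, so that $B_0$ sends an edge to each $B_j$. Then in $G$ the disjoint connected sets $\{v_1\},\dots,\{v_{s-1}\},B_0$ on one side and $B_1,\dots,B_t$ on the other realize a $K_{s,t}$-minor: each $v_i$ is adjacent to every vertex of $V(G)\setminus K\supseteq B_1\cup\cdots\cup B_t$, and $B_0$ meets each $B_j$, which supplies exactly the $s\times t$ cross-edges a $K_{s,t}$-minor requires. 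This contradicts $G$ being $K_{s,t}$-minor free; in particular no vertex of $H$ has degree at least $t$, so $\Delta(H)\leq t-1$.

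The second step is a spectral squeeze. Since $G$ is a spanning subgraph of the connected graph $(K_{s-1}-e)\vee H$, Lemma~\ref{lem::2.2} gives $\rho_\alpha(G)\leq\rho_\alpha((K_{s-1}-e)\vee H)$. As $\Delta(H)\leq t-1$, $s\geq3$, $t\geq s\geq3\geq2$, and $n$ is large enough to meet the lower bounds required by Lemma~\ref{lem::2.6}, that lemma yields that $\rho_\alpha((K_{s-1}-e)\vee H)$ is less than the largest root of $h(x)=0$. On the other hand $F_{s,t}(n)$ is a $K_{s,t}$-minor free graph of order $n$, so maximality of $G$ gives $\rho_\alpha(G)\geq\rho_\alpha(F_{s,t}(n))$, while Lemma~\ref{lem::2.3} (valid for large $n$) states that $\rho_\alpha(F_{s,t}(n))$ is larger than the largest root of $h(x)=0$. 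Chaining these inequalities produces $\rho_\alpha(G)<\rho_\alpha(G)$, a contradiction, so $G[K]\cong K_{s-1}$ and the lemma follows.

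I expect the only subtle points to be (i) stating the minor-lifting argument cleanly---keeping all branch sets disjoint and noting that only the cross-edges of $K_{s,t}$, and none of the edges among the $v_i$ or between $B_0$ and the $v_i$, are needed; and (ii) checking that ``sufficiently large $n$'' is taken large enough to cover the explicit thresholds appearing in Lemmas~\ref{lem::2.3} and~\ref{lem::2.6} simultaneously. The heavy analytic content---comparing $\rho_\alpha$ of $(K_{s-1}-e)\vee H$ with the quadratic $h$---is entirely packaged inside Lemma~\ref{lem::2.6}, so no fresh estimates are needed here.
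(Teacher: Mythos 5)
Your proposal is correct and follows essentially the same route as the paper: Lemma~\ref{lem::2.1} supplies $K$, Lemma~\ref{lem::2.3} gives the lower bound via $F_{s,t}(n)$, and Lemma~\ref{lem::2.6} (with Lemma~\ref{lem::2.2}) bounds $\rho_\alpha((K_{s-1}-e)\vee H)$ below the largest root of $h(x)=0$, yielding the contradiction. The only differences are cosmetic: you merge the paper's two cases ($G[K]\cong K_{s-1}-e$ versus a proper subgraph of it) into a single spanning-subgraph comparison, and you spell out the minor-lifting argument for $\Delta(H)\leq t-1$ that the paper states without proof.
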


\begin{proof}
Let
\begin{equation*}
\begin{aligned}
h(x)=&x^2-(\alpha n+s+t-3) x+
(\alpha(n-s+1)+s-2)(\alpha(s-1)+t-1)\\&-(1-\alpha)^2(s-1)(n-s).
\end{aligned}
\end{equation*}
Note that $F_{s, t}(n)$ is a $K_{s, t}$-minor free graph. Then we have $\rho_{\alpha}(G) \geq \rho_{\alpha}(F_{s, t}(n))$. Furthermore, by Lemma \ref{lem::2.3}, we get that $\rho_{\alpha}(G)$ is larger than the largest root of $h(x)=0$. By Lemma \ref{lem::2.1}, $G$ contains a vertex set $K$ of size $s-1$ such that $d_{G-K}\left(v\right)=n-s+1$ for any vertex $v \in K$. Now, we need to show that $K$ induces a clique. Otherwise, we have $s\geq3$ and $G[K] \subseteq K_{s-1}-e$. Let $H=G-K$, i.e., $G=G[K] \vee H$. Since $G$ is a $K_{s, t}$-minor free graph, we have $\Delta(H)\leq t-1$. If $G[K] \cong K_{s-1}-e$, by Lemma \ref{lem::2.6}, we obtain $\rho_{\alpha}(G)$ is less than the largest root of $h(x)=0$, a contradiction. Therefore, $G[K]$ is a proper subgraph of $K_{s-1}-e$. Let $G^{\prime}$ be the graph obtained from $G$
by adding edges to $G[K]$ to make it a graph $K_{s-1}-e$. Then we have $\rho_{\alpha}(G)<\rho_{\alpha}(G^{\prime})$ by Lemma \ref{lem::2.2}. However, from Lemma \ref{lem::2.6}, $\rho_{\alpha}(G^{\prime})$ is
less than the largest root of $h(x)=0$, which is also a contradiction.
\end{proof}

\begin{lem}(\cite{D.L})\label{lem::2.4}
Let $t \geq 3$ and $n \geq t+2$. If $G$ is an $n$-vertex connected graph with no $K_{1, t}$-minor, then $e(G) \leq \binom{t}{2} +n-t$, and this is best possible for all $n, t$.
\end{lem}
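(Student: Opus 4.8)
The plan is to reformulate the inequality and then induct on $n$, stripping off ``inessential'' vertices via moves that preserve the cycle space. Since $\binom{t}{2}+n-t=(n-1)+\binom{t-1}{2}$, the claim $e(G)\le\binom{t}{2}+n-t$ is equivalent to bounding the cyclomatic number $r(G):=e(G)-n+1$ (the dimension of the cycle space of the connected graph $G$) by $\binom{t-1}{2}$. So I would aim to prove the equivalent statement: every connected $K_{1,t}$-minor-free graph satisfies $r(G)\le\binom{t-1}{2}$.

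Two observations drive the induction. First, $K_{1,t}$-minor-freeness is inherited by minors and forces $\Delta(G)\le t-1$, since a vertex of degree at least $t$ already witnesses a $K_{1,t}$. Second, there are two local moves that keep $G$ connected and $K_{1,t}$-minor-free, strictly decrease $n$, and leave $r(G)$ unchanged: (i) delete a vertex of degree at most $1$ (a subgraph operation, removing one vertex and at most one edge); and (ii) if $v$ has degree $2$ with non-adjacent neighbours $a,b$, contract the edge $va$ — equivalently, delete $v$ and add the edge $ab$ — which is a minor of $G$ and removes exactly one vertex and one edge. Whenever (i) or (ii) applies, I pass to the smaller graph, invoke the induction hypothesis, and conclude. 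It then remains to handle the case where $G$ is \emph{irreducible}: $\delta(G)\ge2$ and every vertex of degree $2$ forms a triangle with its two neighbours.

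For irreducible $G$ I would bound $r(G)$ directly, exploiting the $K_{1,t}$-minor constraint globally rather than block by block. If $|V(G)|\le t$ this is immediate, since $r(G)\le\binom{|V(G)|}{2}-|V(G)|+1=\binom{|V(G)|-1}{2}\le\binom{t-1}{2}$. If $|V(G)|>t$, the structure is severely constrained: each degree-$2$ vertex $v$ lies in a triangle $vab$ and may be ``absorbed into'' the edge $ab$, a contraction that drops $r$ by exactly one; iterating this, and re-applying moves (i)--(ii) to any new low-degree vertices, reduces $G$ to a connected $K_{1,t}$-minor-free minor $\widehat{G}$ with $\delta(\widehat{G})\ge3$ or $|V(\widehat{G})|\le t$. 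Using that a large essentially cubic graph must contain a subtree with at least $t$ leaves (hence a $K_{1,t}$-minor), one shows $\widehat{G}$ has order bounded in terms of $t$; and one checks that the number of triangular ``ears'' removed along the way, together with $r(\widehat{G})$, never exceeds $\binom{t-1}{2}$ — the governing point being that two ears cannot share an endpoint without pushing its degree to $t$, so the ears attach along a matching (e.g.\ when $\widehat{G}$ is a clique on $t-1$ vertices this gives at most $\lfloor\tfrac{t-1}{2}\rfloor$ ears and $r(\widehat{G})\le\binom{t-2}{2}$, and $\binom{t-2}{2}+\lfloor\tfrac{t-1}{2}\rfloor\le\binom{t-1}{2}$).

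The bound is best possible for all $n,t$: for $n\ge t$, take $K_t$, delete one edge $xy$, and join $x$ to $y$ by an internally disjoint path of length $n-t+1$. This graph is connected, has $n$ vertices and $\binom{t}{2}-1+(n-t+1)=\binom{t}{2}+n-t$ edges, and a short case analysis shows that its maximum number of leaves over all subtrees equals $t-1$ (a star centred at a vertex other than $x,y$ realizes $t-1$ leaves, and no subtree improves on this), so it has no $K_{1,t}$-minor. The hard part of the whole argument is the irreducible case: moves (i)--(ii) clear away pendant and inessential degree-$2$ vertices, but not, for instance, a clique carrying several pendant triangles, so one is forced to carry out the global bookkeeping that couples the number of triangular ears to the ``budget'' $\binom{t-1}{2}$ dictated by $K_{1,t}$-minor-freeness.
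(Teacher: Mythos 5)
This lemma is not proved in the paper at all: it is quoted from Ding--Johnson--Seymour \cite{D.L}, so there is no internal argument to compare yours with; your proposal has to stand on its own. The frame you set up is fine: the reformulation $e(G)\leq\binom{t}{2}+n-t \Leftrightarrow r(G):=e(G)-n+1\leq\binom{t-1}{2}$, the observation $\Delta(G)\leq t-1$, the two $r$-preserving reductions (deleting a vertex of degree at most $1$; suppressing a degree-$2$ vertex with non-adjacent neighbours), and the sharpness example ($K_t$ minus an edge plus a long path between its ends, analysed via the ``subtree with $t$ leaves $\Leftrightarrow$ $K_{1,t}$-minor'' equivalence) are all correct, modulo writing out the short case analysis you assert.

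The genuine gap is the irreducible case, which is the entire content of the lemma and which your sketch does not actually prove. Deleting a triangular ear (a degree-$2$ vertex whose two neighbours are adjacent) decreases $r$ by exactly $1$, so these moves cannot be absorbed by induction on the statement $r\leq\binom{t-1}{2}$: applying the inductive hypothesis to the reduced graph only gives $r(G)\leq\binom{t-1}{2}+(\text{number of ears removed})$. Hence your closing step, ``one checks that the number of triangular ears removed, together with $r(\widehat{G})$, never exceeds $\binom{t-1}{2}$,'' is not a verification one performs along the way --- since that sum equals $r(G)$ exactly, it is literally the inequality to be proved, and no mechanism is offered for how $K_{1,t}$-minor-freedom caps it globally. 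Moreover, the one structural claim you offer in support is false in general: two ears sharing an endpoint do not force a vertex of degree $t$; two degree-$2$ vertices joined to the same adjacent pair $a,b$ only force $d(a),d(b)\geq 3$, and ears on two distinct edges through $a$ only force $d(a)\geq 4$, so ears need not attach along a matching (that conclusion holds in your $K_{t-1}$ example only because the ambient degrees there are already $t-2$). Finally, the appeal to ``a large essentially cubic graph contains a subtree with at least $t$ leaves'' is itself a nontrivial leaf-counting theorem of exactly the kind proved in \cite{D.L}, and even granting it, a bound $|\widehat{G}|=O(t)$ only yields $r(\widehat{G})=O(t^2)$ with no control of the constant, so the coupled bookkeeping you defer is where the real proof would have to live.
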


For $2\leq s\leq t$, we say a graph $G$ has the $(s, t)$-property, if $G$ is $K_{a, b}$-minor free for any two positive integers $a, b$ with $a+b=t+1$ and $a \leq \min \left\{s,\left\lfloor\frac{t+1}{2}\right\rfloor\right\}$.
The following Lemma gives a equivalent
condition whether a graph $G$ has  $K_{s, t}$-minor or not.

\begin{lem}(Lemma 2.3, \cite{M.Q})\label{lem::2.10}
Let $2\leq s\leq t$ and $G$ be a graph with a clique dominating set $K$ of size $s-1$. Then $G$ is $K_{s, t}$-minor free if and only if $G-K$ has the $(s, t)$-property.
\end{lem}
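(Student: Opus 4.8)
The plan is to prove both implications by contraposition, using that a clique dominating set $K$ of size $s-1$ means $G=K_{s-1}\vee H$ with $H=G-K$, and that $K$ is simply too small to occupy many branch sets of a complete bipartite minor.

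For the ``if'' direction I would argue the contrapositive: suppose $G$ has a $K_{s,t}$-minor, realized by connected, pairwise disjoint branch sets $A_1,\dots,A_s$ and $B_1,\dots,B_t$ with every $A_i$--$B_j$ pair joined by an edge. Since $|K|=s-1$ and the branch sets are pairwise disjoint, at most $s-1$ of them meet $K$; say $p$ of the $A_i$'s and $q$ of the $B_j$'s do, so $p+q\le s-1$. The remaining $s-p$ branch sets on the first side and $t-q$ on the second side lie in $V(H)$, hence are connected in $H$, and any $G$-edge joining two such sets has both ends in $V(H)$, so it is an edge of $H$. Thus $H$ has a $K_{s-p,\,t-q}$-minor, where $1\le s-p$ (as $p\le s-1$), $1\le t-q$ (as $q\le s-1\le t-1$), and $(s-p)+(t-q)\ge t+1$. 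Deleting vertices from branch sets on the larger side, $H$ has a $K_{a_0,b_0}$-minor with $a_0=s-p\le s$ and $b_0=t+1-a_0\ge 1$; a short check (swap the two sides when $a_0>\lfloor(t+1)/2\rfloor$, noting $b_0<a_0\le s$ and $b_0\le\lfloor(t+1)/2\rfloor$ in that case) yields a pair $(a,b)$ with $a+b=t+1$ and $a\le\min\{s,\lfloor(t+1)/2\rfloor\}$ such that $H$ has a $K_{a,b}$-minor, i.e.\ $H$ fails the $(s,t)$-property.

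For the ``only if'' direction, again contrapositively, suppose $H$ has a $K_{a,b}$-minor with $a+b=t+1$ and $1\le a\le\min\{s,\lfloor(t+1)/2\rfloor\}$, witnessed by branch sets $A_1,\dots,A_a,B_1,\dots,B_b\subseteq V(H)$. I would assemble a $K_{s,t}$-minor in $G$ from these together with the $s-1$ vertices of $K$ taken as singleton branch sets, giving $a+b+(s-1)=s+t$ branch sets in all. Place the $a$ sets $A_i$ and $s-a$ of the $K$-singletons on the $s$-side (possible since $1\le a\le s$), and the remaining $a-1$ $K$-singletons together with the $b$ sets $B_j$ on the $t$-side, which then has size $(a-1)+b=t$. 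All branch sets are connected and pairwise disjoint, and every cross pair is adjacent: the $A_i$--$B_j$ pairs by the minor inside $H$, and any pair involving a $K$-singleton because $K$ dominates $V(G)$ and is a clique. Hence $G$ has a $K_{s,t}$-minor.

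The main obstacle is the parameter bookkeeping in the first direction: one must verify that discarding the (at most $s-1$) branch sets meeting $K$ still leaves a complete bipartite minor of total order at least $t+1$ whose first side has size at most $s$, and then reduce this to the canonical pair $(a,b)$ demanded by the definition of the $(s,t)$-property — handling the possibility that a single branch set absorbs several vertices of $K$ (which only helps) and that, after reduction, the side bounded by $s$ may be the smaller one, so that the final $a$ indeed satisfies $a\le\min\{s,\lfloor(t+1)/2\rfloor\}$.
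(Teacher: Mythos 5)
Your proof is correct. Note that the paper does not prove this lemma itself (it is quoted as Lemma 2.3 of Zhai and Lin \cite{M.Q}); your branch-set argument — in one direction discarding the at most $s-1$ branch sets meeting $K$ and trimming/swapping sides to reach a pair $(a,b)$ with $a+b=t+1$ and $a\le\min\{s,\lfloor(t+1)/2\rfloor\}$, and in the other direction adjoining the vertices of $K$ as singleton branch sets split $s-a$ and $a-1$ between the two sides — is the standard argument for this equivalence, and your parameter bookkeeping (including $b_0=t+1-a_0\ge t+1-s\ge1$ and the swap when $a_0>\lfloor(t+1)/2\rfloor$) checks out.
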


Recall that $H_{s, t}:\cong(\beta-1)K_{1,s} \cup K_{1,\alpha}$, where $1\leq s\leq t$, $\beta=\left\lfloor\frac{t+1}{s+1}\right\rfloor$ and $\alpha=t-(\beta-1)(s+1)\geq s$,
$S^{1}\left(\overline{H_{s, t}}\right)$ is a graph obtained from a graph $\overline{H_{s, t}}$ by subdividing once of an edge $uv$ with minimum degree sum $d_{G}(u)+d_{G}(v)$ and  $H^{\star}$ is the Petersen graph.
The following result is from Theorem 3.1 and the proofs of Claims $3.8$-$3.9$ in \cite{M.Q}.
\begin{lem}(\cite{M.Q})\label{lem::2.16}
Let $2\leq s\leq t$ and $t\geq4$. Then
	
(i) $\overline{H_{s,t}}$ and $S^{1}\left(\overline{H_{s, t}}\right)$ have the $(s, t)$-property. Moreover, if $\left\lfloor\frac{t+1}{s+1}\right\rfloor=2$, then $\pi\left(S^{1}\left(\overline{H_{s, t}}\right)\right)=(t-1, \ldots, t-1, t-s, s+1,2)$.

(ii) $\overline{H^{\star}}$ has the $(s, t)$-property for $t=8$.
\end{lem}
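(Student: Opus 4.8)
The plan is to reduce everything — both $(s,t)$-property claims and the degree-sequence claim — to two elementary facts and a short connectivity argument, so that almost no computation is needed. The first is a \emph{counting reduction}: $K_{a,b}$ with $a+b=t+1$ has exactly $t+1$ vertices, while $\overline{H_{s,t}}$ has $t+1$ vertices and $S^{1}(\overline{H_{s,t}})$, $\overline{H^{\star}}$ each have $t+2$ vertices (the last since $|V(H^{\star})|=10=t+2$). Since deleting a vertex or contracting an edge lowers the order by exactly one, any $K_{a,b}$-minor model covers all but at most one vertex and uses at most one branch set of size $2$; hence $K_{a,b}$ is a minor of a graph $F$ with $|V(F)|=t+1$ iff it is a spanning subgraph of $F$, and a minor of a graph $F$ with $|V(F)|=t+2$ iff it is a spanning subgraph of $F-x$ for some vertex $x$, or of $F/uv$ for some edge $uv$. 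The second is a \emph{complement criterion}: for a graph $H$ on $t+1$ vertices, $K_{a,b}$ (with $a+b=t+1$) is a spanning subgraph of $H$ iff the multiset of orders of the components of $\overline{H}$ splits into two sub-multisets with sums $a$ and $b$ (a spanning $K_{a,b}$ is exactly a bipartition of $V(H)$ with no crossing edge in $\overline{H}$); in particular $\overline{H}$ connected forces $H$ to have no spanning $K_{a,b}$ with $a,b\ge 1$. One also uses that complementation commutes with vertex deletion, and that $\overline{F/uv}$ is $\overline{F}-\{u,v\}$ with one new vertex joined to $N_{\overline{F}}(u)\cap N_{\overline{F}}(v)$.

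For $\overline{H_{s,t}}$ this finishes it at once: $\overline{\overline{H_{s,t}}}=H_{s,t}=(\beta-1)K_{1,s}\cup K_{1,\alpha}$ has all components (stars) of order $s+1$ or $\alpha+1\ge s+1$, so no nonempty union of components has order $\le s$, and since $\min\{s,\lfloor(t+1)/2\rfloor\}\le s$ this gives the $(s,t)$-property. For the degree sequence I would work in the case $\beta=2$ (so $\alpha=t-s-1$, $t\ge 2s+1$): in $\overline{H_{s,t}}$ the small-star centre $c$ has degree $t-s$, the big-star centre $C$ has degree $t-\alpha=s+1$, and all $t-1$ leaves have degree $t-1$; the edge $cC$ is present (the centres lie in different components of $H_{s,t}$) with endpoint-degree sum $t+1$, and a one-line comparison shows every other edge has strictly larger sum (by at least $s\ge 2$). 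Hence $S^{1}$ subdivides $cC$ by a new vertex $w$ of degree $2$, leaving $d(c)=t-s$, $d(C)=s+1$ and the leaf degrees untouched; sorting yields $\pi\bigl(S^{1}(\overline{H_{s,t}})\bigr)=(t-1,\dots,t-1,t-s,s+1,2)$.

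For the $(s,t)$-property of $G:=S^{1}(\overline{H_{s,t}})$ I would treat the case $\beta=2$, which is the one in which this graph enters Theorem~\ref{thm::1.1}. Put $H^{+}:=\overline{G}=H_{s,t}+cC+\{w\text{ joined to all }t-1\text{ leaves}\}$, a graph on $t+2$ vertices. First check $H^{+}$ is $2$-connected (deleting $w$ leaves the two stars joined through $cC$; deleting $c$, $C$, or any leaf keeps the graph connected through $w$), so $H^{+}-x$ is connected for every $x$ and the complement criterion kills every deletion case. For a contraction of an edge $uv\in E(G)$, i.e.\ a non-edge $uv$ of $H^{+}$, the graph $\overline{G/uv}$ is $H^{+}-\{u,v\}$ with an added vertex joined to $N_{H^{+}}(u)\cap N_{H^{+}}(v)$; running through the handful of types of non-edge of $H^{+}$ shows $\overline{G/uv}$ is either connected, or — exactly when $\{u,v\}$ is $\{c,w\}$ or $\{C,w\}$ — isomorphic to $K_{1,s}\cup K_{1,\alpha}$, whose components have orders $s+1$ and $\alpha+1\ge s+1$. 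Either way the component orders admit no part of size $\le s=\min\{s,\lfloor(t+1)/2\rfloor\}$, so $G$ has the $(s,t)$-property. I expect this exhaustion of the non-edge types of $H^{+}$ to be the only genuinely fiddly step, although each case is routine.

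Finally, for $\overline{H^{\star}}$ with $t=8$ I would use that the Petersen graph $H^{\star}$ is $3$-connected and strongly regular with parameters $(10,3,0,1)$, so two non-adjacent vertices have exactly one common neighbour. Then $H^{\star}-x$ is $2$-connected for every $x$, so $\overline{\overline{H^{\star}}-x}=H^{\star}-x$ is connected on $9$ vertices; and for every edge $uv$ of $\overline{H^{\star}}$ (i.e.\ $uv\notin E(H^{\star})$), the graph $\overline{\overline{H^{\star}}/uv}$ is the connected graph $H^{\star}-\{u,v\}$ with one vertex added and joined to the unique common neighbour of $u$ and $v$, hence again connected on $9$ vertices. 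By the counting reduction and the complement criterion, $\overline{H^{\star}}$ has no $K_{a,b}$-minor at all with $a+b=9$ and $a,b\ge 1$, in particular none with $a\le\min\{s,4\}$, so $\overline{H^{\star}}$ has the $(s,8)$-property.
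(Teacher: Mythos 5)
Your argument is correct where it is given, and since the paper supplies no proof of this lemma (it is imported verbatim from \cite{M.Q}), there is no in-paper argument to match; what you have built is a legitimate self-contained derivation. Your ``complement criterion'' on $t+1$ vertices is exactly the paper's Lemma \ref{lem::2.11} (Lemma 3.1 of \cite{M.Q}), and your real contribution is the clean order-counting reduction for graphs on $t+2$ vertices: a $K_{a,b}$ with $a+b=t+1$ is a minor iff it is a spanning subgraph after deleting one vertex or contracting one edge, after which everything becomes a finite check of complements of deletions/contractions. I verified the checks: $H_{s,t}$ has all components of order at least $s+1$, which settles $\overline{H_{s,t}}$; for $\beta=2$ the graph $H^{+}=\overline{S^{1}(\overline{H_{s,t}})}$ is $2$-connected and the only contractions whose complement is disconnected are at $\{c,w\}$ and $\{C,w\}$, giving $K_{1,s}\cup K_{1,\alpha}$ with parts of order at least $s+1$; and for $\overline{H^{\star}}$ the $3$-connectivity plus the strongly regular parameters $(10,3,0,1)$ make every relevant complement connected. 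The degree-sequence computation for $\beta=2$ is also right (one cosmetic slip: the excess of any other edge's degree sum over $t+1$ is $s-1$, not $s$, which is still positive since $s\geq2$).

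The one point to flag is your deliberate restriction of the $(s,t)$-property claim for $S^{1}(\overline{H_{s,t}})$ to $\beta=2$, while the lemma as transcribed asserts it for all $2\leq s\leq t$, $t\geq4$. This is not a gap you could close: the unrestricted statement is false. For $\beta=1$ one has $\overline{H_{s,t}}=K_{1}\cup K_{t}$, so $S^{1}(\overline{H_{s,t}})=K_{1}\cup S^{1}(K_{t})$ contains $K_{2,t-1}$ as a subgraph (take the two endpoints of the subdivided edge on one side), violating the property with $a=2\leq\gamma$. For $\beta\geq3$ the subdivided edge joins two star centres, and contracting the edge of $S^{1}(\overline{H_{s,t}})$ between one of these centres and a leaf of a third star produces a vertex adjacent to all $t$ others (its two ends have no common non-neighbour), i.e.\ a $K_{1,t}$-minor, again violating the property since $a=1$ is allowed. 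So the transcription of the lemma is overbroad; the correct reading (and the only one the paper ever invokes, in Claim \ref{clm::3.3} and Lemmas \ref{lem::3.13}--\ref{lem::3.14}) is the $\beta=2$ case, which is precisely what you proved. It would be worth stating that restriction explicitly rather than only remarking that $\beta=2$ is ``the case that enters Theorem \ref{thm::1.1}.''
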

\begin{lem}(Lemma 3.1, \cite{M.Q})\label{lem::2.11}
Let $2\leq s\leq t$, $t\geq4$, $\gamma = \min \left\{s,\left\lfloor\frac{t+1}{2}\right\rfloor\right\}$ and $G$ be a connected graph with $|G|=t+1$. Then $G$ has the $(s, t)$-property if and only if each component of $\overline{G}$ has at least $\gamma+1$ vertices.
\end{lem}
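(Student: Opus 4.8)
\emph{Proof sketch.} The plan is to exploit the fact that $|V(G)| = t+1$ equals the number of vertices of every relevant complete bipartite graph $K_{a,t+1-a}$. First I would observe that if $a,b$ are positive integers with $a+b=t+1$, then a minor model of $K_{a,b}$ in $G$ consists of $a+b=|V(G)|$ pairwise disjoint nonempty branch sets, so each branch set must be a single vertex. Hence $G$ has a $K_{a,b}$-minor if and only if $V(G)$ admits a partition $V(G)=A\cup B$ with $|A|=a$, $|B|=b$ and every vertex of $A$ adjacent in $G$ to every vertex of $B$. Passing to the complement, this says precisely that $\overline{G}$ contains no edge between $A$ and $B$, i.e. each connected component of $\overline{G}$ is contained either in $A$ or in $B$.

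From this I would deduce that $G$ contains $K_{a,t+1-a}$ as a minor if and only if the components of $\overline{G}$ can be partitioned into two groups of total sizes $a$ and $t+1-a$; equivalently, if and only if some nonempty subfamily of the components of $\overline{G}$ has total size exactly $a$ (the remaining components then account for the other $t+1-a$ vertices, and that remaining part is nonempty because $a\leq\gamma<t+1$). Since $G$ has the $(s,t)$-property exactly when it is $K_{a,t+1-a}$-minor free for every integer $a$ with $1\leq a\leq\gamma$, combining over all such $a$ gives: $G$ fails the $(s,t)$-property if and only if there is a nonempty subfamily of the components of $\overline{G}$ whose total size lies in $\{1,\ldots,\gamma\}$, i.e. is at most $\gamma$.

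Finally I would note that a nonempty subfamily of components of total size at most $\gamma$ exists if and only if $\overline{G}$ has a component of size at most $\gamma$: if $C$ is a component with $|C|\leq\gamma$ then $\{C\}$ already works, and conversely any single component in such a subfamily has size at most the total size of the subfamily, hence at most $\gamma$. Negating both sides of the equivalence from the previous paragraph then yields exactly the statement of the lemma: $G$ has the $(s,t)$-property if and only if every component of $\overline{G}$ has at least $\gamma+1$ vertices. (The hypothesis that $G$ is connected is not actually needed: if $G$ is disconnected then $\overline{G}$ is connected with its single component of size $t+1\geq\gamma+1$, while $G$ is trivially $K_{a,b}$-minor free for all $a,b\geq1$ with $a+b=t+1$; the connectedness hypothesis is just inherited from the application of the lemma.)

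The one place that needs genuine care is the opening reduction from ``$K_{a,b}$-minor'' to ``spanning $K_{a,b}$-subgraph'': it depends entirely on the exact count $|V(G)|=a+b$ forcing branch sets to be singletons, and one must be careful to identify, among $a$ and $t+1-a$, which is the smaller part so that the index range $1\leq a\leq\gamma$ appearing in the definition of the $(s,t)$-property is matched correctly (here $a\leq\gamma\leq\lfloor\frac{t+1}{2}\rfloor$ guarantees $a\leq t+1-a$). Everything after that is elementary bookkeeping on the component sizes of $\overline{G}$.
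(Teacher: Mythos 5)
Your argument is correct: since $|V(G)|=t+1=a+b$, every branch set of a $K_{a,b}$-minor model must be a singleton, so such a minor exists exactly when the components of $\overline{G}$ can be split into groups of sizes $a$ and $t+1-a$, and your subsequent bookkeeping (a nonempty subfamily of total size at most $\gamma$ exists iff a single component of size at most $\gamma$ exists) is sound, as is the remark that connectedness of $G$ is not really needed. The paper does not prove this statement itself but imports it as Lemma 3.1 of Zhai and Lin, and your reduction is essentially the same singleton-branch-set argument underlying that source, so there is nothing to reconcile.
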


The following result comes from the proof of Lemma 3.2 in \cite{M.Q}.
\begin{lem}(\cite{M.Q})\label{lem::2.12}
Let $2\leq s\leq t$, $t\geq4$, $\beta=\left\lfloor\frac{t+1}{s+1}\right\rfloor$ and $G$ be a connected graph with $|G|=t+1$. If $G$ is an edge-maximal graph with the $(s, t)$-property, then $e(G)=\binom{t}{2}+\beta-1$ and $\overline{G}$ is a forest with $\beta$ components.
\end{lem}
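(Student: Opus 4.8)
The plan is to pass to complements and run everything through Lemma~\ref{lem::2.11}. Write $\gamma=\min\{s,\lfloor\frac{t+1}{2}\rfloor\}$, so that by Lemma~\ref{lem::2.11} a connected graph on $t+1$ vertices has the $(s,t)$-property exactly when every component of its complement has at least $\gamma+1$ vertices; here I read \emph{edge-maximal} in the usual sense, namely that $G$ has the largest number of edges among all connected graphs on $t+1$ vertices with the $(s,t)$-property. Since $e(G)=\binom{t+1}{2}-e(\overline G)$, the problem becomes: minimize $e(\overline G)$ over all graphs $\overline H$ such that $H$ is connected on $t+1$ vertices and every component of $\overline H$ has $\ge\gamma+1$ vertices.

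First I would bound the number of components $k$ of $\overline G$. Applying Lemma~\ref{lem::2.11} to the given $G$ (connected, $t+1$ vertices, $(s,t)$-property) shows each component of $\overline G$ has $\ge\gamma+1$ vertices, so $k(\gamma+1)\le t+1$, i.e. $k\le\lfloor\frac{t+1}{\gamma+1}\rfloor$. A short case check gives $\lfloor\frac{t+1}{\gamma+1}\rfloor=\beta$: if $t\ge 2s-1$ then $\gamma=s$ and this is immediate; if $s\le t\le 2s-2$ then $\gamma=\lfloor\frac{t+1}{2}\rfloor$ and, using $t\ge 4$, both quantities equal $1$. Hence $k\le\beta$. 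Since a graph on $t+1$ vertices with $k$ components has at least $(t+1)-k$ edges, with equality precisely for forests,
$$e(\overline G)\ \ge\ (t+1)-k\ \ge\ (t+1)-\beta ,$$
so $e(G)\le\binom{t+1}{2}-(t+1)+\beta=\binom{t}{2}+\beta-1$, with equality if and only if $\overline G$ is a forest with exactly $\beta$ components.

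It remains to check that this bound is attained, so that an edge-maximal $G$ really has $\binom{t}{2}+\beta-1$ edges. Since $\beta=\lfloor\frac{t+1}{\gamma+1}\rfloor$ we have $t+1\ge\beta(\gamma+1)$, so I can partition the $t+1$ vertices into $\beta$ groups, $\beta-1$ of them of size $\gamma+1$ and one of size $(t+1)-(\beta-1)(\gamma+1)\ge\gamma+1$, and let $\overline{G_0}$ be the union of paths on these groups. By Lemma~\ref{lem::2.11}, $G_0:=\overline{\overline{G_0}}$ has the $(s,t)$-property as soon as $G_0$ is connected, and $G_0$ is connected because $\overline{G_0}$ has no vertex of degree $t$: for $\beta\ge2$ each component of $\overline{G_0}$ has at most $t-\gamma\le t-1$ vertices, while for $\beta=1$ we have $\overline{G_0}=P_{t+1}$ with $t+1\ge5$, whose complement is connected. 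Thus $e(G_0)=\binom{t}{2}+\beta-1$, so the maximum equals $\binom{t}{2}+\beta-1$.

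Putting the pieces together, an edge-maximal $G$ has $e(G)=\binom{t}{2}+\beta-1$, which forces equality throughout the displayed chain of inequalities; hence $\overline G$ is a forest with exactly $\beta$ components, as claimed. I expect the only non-routine steps to be the arithmetic identity $\lfloor\frac{t+1}{\gamma+1}\rfloor=\beta$ (the small case analysis above) and the verification that the extremal configuration gives a genuinely connected $G$ — equivalently, that its complement has no universal vertex, which is where the choice of paths rather than stars matters when $\beta=1$. Everything else is the elementary fact that forests are the sparsest graphs with a prescribed number of components, together with repeated use of Lemma~\ref{lem::2.11}.
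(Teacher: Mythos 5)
Your argument is correct and follows what is essentially the intended route: since this paper only quotes the lemma from \cite{M.Q} without reproducing a proof, the comparison is with the cited argument, which is exactly your complementation scheme — apply Lemma~\ref{lem::2.11}, bound the number of components of $\overline{G}$ by $\beta$ via $k(\gamma+1)\le t+1$ and the identity $\lfloor\frac{t+1}{\gamma+1}\rfloor=\beta$, use that forests minimize edges for a given number of components to get $e(G)\le\binom{t}{2}+\beta-1$ with equality forcing a forest with $\beta$ components, and exhibit a connected extremal example to justify the ``edge-maximal'' hypothesis. The only cosmetic slip is the phrase ``connected because $\overline{G_0}$ has no vertex of degree $t$'' (absence of a universal vertex in the complement is necessary but not sufficient for connectivity), yet the facts you actually verify do suffice: for $\beta\ge2$ the graph $\overline{G_0}$ is disconnected, so its complement $G_0$ is automatically connected, and for $\beta=1$ the complement of $P_{t+1}$ with $t+1\ge5$ is indeed connected.
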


\begin{lem}(Lemma 3.3, \cite{M.Q})\label{lem::2.13}
Let $G$ be a graph with $vw \in E(G)$ and $uw \notin E(G)$. If $d_{G}(u) \geq d_{G}(v)$, then $\pi(G) \prec \pi(G-\{v w\}+\{u w\})$ and $\pi(G) \neq \pi(G-\{v w\}+\{u w\})$.
\end{lem}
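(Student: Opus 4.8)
The plan is to prove the two assertions separately, working directly from the definition of majorization; no earlier result from the excerpt is needed. Write $G' = G-\{vw\}+\{uw\}$. Since $vw\in E(G)$ while $uw\notin E(G)$, the vertices $u,v,w$ are pairwise distinct, so $G'$ is again a simple graph on the same vertex set $V=V(G)$, and the only degrees that change are $d_{G'}(u)=d_{G}(u)+1$ and $d_{G'}(v)=d_{G}(v)-1$ (in particular $d_{G'}(w)=d_G(w)$). Hence $\sum_{x\in V}d_{G'}(x)=\sum_{x\in V}d_{G}(x)=2|E(G)|$, so $\pi(G)$ and $\pi(G')$ are two non-increasing sequences of length $n$ with the same sum, and it suffices to compare their partial sums.

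For a graph $H$ on $V$ and $1\le k\le n$ let $S_k(H)$ denote the sum of the $k$ largest entries of $\pi(H)$; the elementary fact I would use is $S_k(H)=\max\{\sum_{x\in T}d_H(x):T\subseteq V,\ |T|=k\}$. Fix $k$ and pick a $k$-set $T$ with $\sum_{x\in T}d_G(x)=S_k(G)$. I would then split into cases on whether $u,v\in T$. If $v\notin T$, then $\sum_{x\in T}d_{G'}(x)=S_k(G)+[u\in T]\ge S_k(G)$. If $u,v\in T$, then $\sum_{x\in T}d_{G'}(x)=S_k(G)+1-1=S_k(G)$. The remaining case is $v\in T$, $u\notin T$: here I would pass to $T'=(T\setminus\{v\})\cup\{u\}$ and use $d_G(u)\ge d_G(v)$ to get $\sum_{x\in T'}d_{G'}(x)=\bigl(S_k(G)-d_G(v)+d_G(u)\bigr)+1\ge S_k(G)$. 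In every case $S_k(G')\ge S_k(G)$, and for $k=n$ equality holds; hence $\pi(G)\prec\pi(G')$, i.e. $\pi(G)$ is majorized by $\pi(G')$.

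For $\pi(G)\ne\pi(G')$ I would argue at the level of degree multisets. Passing from $G$ to $G'$ replaces one occurrence of $a:=d_G(u)$ and one occurrence of $b:=d_G(v)$ by one occurrence of $a+1$ and one occurrence of $b-1$, all other values being untouched. Since $a\ge b$ forces $a+1>b-1$ and $a+1>a$, a one-line check (separating $a=b$, where $\{a+1,b-1\}$ has two distinct elements but $\{a,b\}=\{a,a\}$ does not, from $a>b$, where matching largest to largest would need $a+1=a$) shows the multiset $\{a+1,b-1\}$ cannot equal the multiset $\{a,b\}$. Therefore the degree multisets of $G$ and $G'$ differ, so $\pi(G)\ne\pi(G')$.

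I do not expect a genuine obstacle: this is a purely combinatorial ``transfer'' statement (moving one unit of degree from the vertex $v$ to the at-least-as-large vertex $u$), and one could alternatively quote the classical Hardy--Littlewood--P\'olya fact that such a transfer increases the majorization order. The only points needing a little care are the observation that $u,v,w$ are distinct (so exactly two degrees move and the degree sum is preserved), the $\max$-over-$k$-sets description of $S_k$, and the case $v\in T$, $u\notin T$, which is exactly where the hypothesis $d_G(u)\ge d_G(v)$ is used.
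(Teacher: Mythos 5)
Your proof is correct. Note that this paper does not prove the lemma at all: it is quoted verbatim from Zhai and Lin (Lemma 3.3 of \cite{M.Q}), so there is no in-paper argument to compare against; your direct verification is essentially the standard ``transfer'' proof one would give (and is in the spirit of the cited source). The case analysis via the characterization $S_k(H)=\max\{\sum_{x\in T}d_H(x):|T|=k\}$ is sound, the hypothesis $d_G(u)\ge d_G(v)$ is used exactly where it must be (the case $v\in T$, $u\notin T$), and the multiset argument for $\pi(G)\neq\pi(G')$ is fine, since equality of the degree multisets would force $\{a,b\}=\{a+1,b-1\}$, which is impossible when $a\ge b$. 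One pedantic remark: $uw\notin E(G)$ does not by itself yield $u\neq w$ (a simple graph has no loops, so $ww\notin E(G)$ trivially); rather, $u\neq w$ is implicit in the statement because $G-\{vw\}+\{uw\}$ must again be a simple graph. This does not affect the validity of your argument.
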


\begin{lem}(Lemma 3.4, \cite{M.Q})\label{lem::2.15}
Let $2\leq s\leq t$, $t\geq4$, $\left\lfloor\frac{t+1}{s+1}\right\rfloor \leq 2$ and $G$ be a connected graph with $|G|=t+2$. If $G$ has the $(s, t)$-property, then $e(G) \leq \binom{t}{2} +2$, and if equality holds, then $\overline{G}$ is isomorphic to either the Petersen graph $H^{\star}$ or some $H_{a, b, c}$ (see Fig. \ref{fig-1}), where $a+b+c=t-1$.
\end{lem}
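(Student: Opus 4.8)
The plan is to pass to the complement $\overline G$, split the statement into an easy edge bound and a harder classification of the equality case, and use a branch‑set description of the forbidden minors throughout.

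For the bound, observe that since $2\le s$ and $t\ge4$ the pair $(a,b)=(1,t)$ satisfies $a+b=t+1$ and $a=1\le\min\{s,\lfloor\frac{t+1}{2}\rfloor\}$, so any graph with the $(s,t)$-property is in particular $K_{1,t}$-minor free. Lemma~\ref{lem::2.4} (applicable since $t\ge3$ and $n=t+2$) then gives $e(G)\le\binom{t}{2}+(t+2)-t=\binom{t}{2}+2$. From now on assume equality. As $e(G)+e(\overline G)=\binom{t+2}{2}$, this is exactly $e(\overline G)=\binom{t+2}{2}-\binom{t}{2}-2=2t-1$, and the remaining task is to prove that such a $\overline G$ must be the Petersen graph $H^{\star}$ or one of the graphs $H_{a,b,c}$ of Fig.~\ref{fig-1} with $a+b+c=t-1$.

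The next step is to read off the structure of $\overline G$ from the minor hypotheses. Since $K_{a,b}$ with $a+b=t+1$ has $t+1$ vertices and $|G|=t+2$, any $K_{a,b}$-minor of $G$ is realised either with all branch sets singletons and one vertex $v$ of $G$ unused, or with a single branch set of size two (an edge $e\in E(G)$) and the rest singletons. In the first case such a minor is precisely a bipartition of $V(\overline G)\setminus\{v\}$ into parts of sizes $a$ and $t+1-a$ with no edge of $\overline G$ across it, i.e.\ a union of components of $\overline G-v$ of total size $a$; in the second case the same statement holds for $\overline{G/e}$, which is $\overline G$ with the two ends of $e$ identified into one vertex whose neighbourhood is the intersection of their $\overline G$-neighbourhoods. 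Running this with $a=1$ over all $v$ and all $e$ forces $\delta(\overline G)\ge2$ and that any two non-adjacent vertices of $\overline G$ have a common neighbour, hence $\overline G$ is connected of diameter at most $2$. Running it with $2\le a\le\gamma:=\min\{s,\lfloor\frac{t+1}{2}\rfloor\}$ forces, for every $v$, that $\overline G-v$ is connected or has all components of size $\ge\gamma+1$, and similarly for each $\overline{G/e}$; using $\lfloor\frac{t+1}{s+1}\rfloor\le2$ one checks that $\overline G-v$ has at most two components and, when it has two, both have size $\ge\gamma+1$. In particular $\overline G$ is $2$-connected when $\gamma=\lfloor\frac{t+1}{2}\rfloor$, while for $\gamma=s$ every side of every cut vertex has at least $\gamma+1$ vertices.

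Finally I would run an extremal analysis on a graph $\overline G$ with $t+2$ vertices, exactly $2t-1$ edges, minimum degree $\ge2$, diameter $\le2$, and the cut/contraction restrictions above. A degree count gives $\sum_{v}(d_{\overline G}(v)-3)=t-8$, so $\overline G$ has average degree strictly below $4$, with vertices of degree exactly $2$ present when $t<8$; locating the vertices of degree $2$ and $3$, using that their neighbourhoods must reach all of $\overline G$ within distance two, and applying the edge-contraction restrictions should reduce the possibilities to finitely many, which are then matched against Fig.~\ref{fig-1}. I expect this last step to be the bulk of the work: it is a careful finite case analysis that must respect sparsity, the diameter-$2$ condition and the deletion/contraction constraints at once, treat the sub-cases $\gamma=\lfloor\frac{t+1}{2}\rfloor$ and $\gamma=s$ separately, and correctly single out the Petersen graph as the one extremal configuration not of the form $H_{a,b,c}$, which can occur only for $t=8$ (there $\overline G$ has $10$ vertices and average degree $3$, and being $2$-connected of diameter $2$ pins it down).
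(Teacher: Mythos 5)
Your first half is fine: deducing $K_{1,t}$-minor freeness from the $(s,t)$-property (valid since $s\ge 2$ and $t\ge 4$ give $1\le\min\{s,\lfloor\frac{t+1}{2}\rfloor\}$) and invoking Lemma~\ref{lem::2.4} with $n=t+2$ gives $e(G)\le\binom{t}{2}+2$ exactly as one should, and your translation of the minor conditions to the complement is also correct: because $K_{a,b}$ with $a+b=t+1$ has only one vertex fewer than $G$, every model uses singleton branch sets plus at most one branch set that is an edge, and this yields precisely the conditions you state ($\delta(\overline G)\ge 2$, any two nonadjacent vertices of $\overline G$ have a common neighbour, and for every vertex deletion and every contraction of a non-edge of $\overline G$ all components of the resulting graph have at least $\gamma+1$ vertices, hence at most two components under $\lfloor\frac{t+1}{s+1}\rfloor\le 2$). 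Note that this paper does not prove the lemma at all — it is quoted from Zhai and Lin \cite{M.Q} — so the only question is whether your argument is complete on its own.

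It is not: the equality case, which is the entire substance of the statement, is left as a promissory note. You derive necessary conditions ($t+2$ vertices, $2t-1$ edges in $\overline G$, $\delta\ge 2$, diameter $\le 2$, the deletion/contraction component conditions) and then say that a "careful finite case analysis" should reduce the possibilities to $H^{\star}$ and the graphs $H_{a,b,c}$, but you never carry it out. This is not a routine finishing step: $H_{a,b,c}$ is an infinite family parameterized by $a+b+c=t-1$, so no literally finite check can work; one must extract the three-clique-plus-path structure of $\overline G$ for arbitrary $t$, and that requires actually deploying the deletion/contraction conditions (not just the degree count and diameter bound, which admit many other sparse diameter-two graphs). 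Likewise your remark that for $t=8$ being $2$-connected of diameter $2$ with $10$ vertices and $15$ edges "pins down" the Petersen graph is an assertion, not a proof — uniqueness of $H^{\star}$ among the admissible configurations has to be argued. As it stands the proposal establishes the upper bound and a correct framework for the characterization, but the characterization itself — the part for which the paper cites Lemma 3.4 of \cite{M.Q} — is missing.
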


\begin{figure}[hbtp]
\centering
\begin{tikzpicture}[x=0.9mm, y=0.9mm, inner xsep=0pt, inner ysep=0pt, outer xsep=0pt, outer ysep=0pt]
\path[line width=0mm] (25.74,33.08) rectangle +(155.86,74.28);
\definecolor{L}{rgb}{0,0,0}
\path[line width=0.30mm, draw=L] (60.31,92.82) circle (12.55mm);
\path[line width=0.30mm, draw=L] (80.50,57.74) circle (12.55mm);
\path[line width=0.30mm, draw=L] (40.29,56.89) circle (12.55mm);
\definecolor{F}{rgb}{0,0,0}
\path[line width=0.30mm, draw=L, fill=F] (57.23,71.77) circle (1.00mm);
\path[line width=0.30mm, draw=L, fill=F] (63.22,71.09) circle (1.00mm);
\path[line width=0.30mm, draw=L, fill=F] (59.28,66.98) circle (1.00mm);
\path[line width=0.30mm, draw=L] (63.22,71.77) -- (69.03,85.97);
\path[line width=0.30mm, draw=L] (63.05,71.77) -- (62.02,82.89);
\path[line width=0.30mm, draw=L] (63.22,71.77) -- (79.98,68.52);
\path[line width=0.30mm, draw=L] (63.05,71.60) -- (72.11,63.90);
\path[line width=0.30mm, draw=L] (57.91,71.94) -- (59.97,83.06);
\path[line width=0.30mm, draw=L] (58.08,72.11) -- (52.78,85.63);
\path[line width=0.30mm, draw=L] (57.57,71.94) -- (40.98,67.67);
\path[line width=0.30mm, draw=L] (57.57,72.29) -- (47.14,64.07);
\path[line width=0.30mm, draw=L] (59.45,67.32) -- (49.02,62.02);
\path[line width=0.30mm, draw=L] (59.28,66.64) -- (50.39,53.29);
\path[line width=0.30mm, draw=L] (59.28,66.98) -- (71.60,62.88);
\path[line width=0.30mm, draw=L] (58.94,66.47) -- (71.26,55.52);
\draw(56.80,91.87) node[anchor=base west]{\fontsize{14.23}{17.07}\selectfont $\overline{K_a}$};
\draw(35.94,54.28) node[anchor=base west]{\fontsize{14.23}{17.07}\selectfont $\overline{K_b}$};
\draw(76.88,54.28) node[anchor=base west]{\fontsize{14.23}{17.07}\selectfont $\overline{K_c}$};
\draw(52.05,72.11) node[anchor=base west]{\fontsize{14.23}{17.07}\selectfont $w$};
\draw(65.94,72.88) node[anchor=base west]{\fontsize{14.23}{17.07}\selectfont $u_1$};
\draw(57.23,61.03) node[anchor=base west]{\fontsize{14.23}{17.07}\selectfont $u_2$};
\draw(54.10,36.61) node[anchor=base west]{\fontsize{14.23}{17.07}\selectfont $H_{a,b,c}$};
\path[line width=0.30mm, draw=L] (57.49,71.88) -- (63.31,71.20);
\path[line width=0.30mm, draw=L] (126.51,57.23) circle (12.55mm);
\path[line width=0.30mm, draw=L] (146.18,92.64) circle (12.55mm);
\path[line width=0.30mm, draw=L] (167.05,57.40) circle (12.55mm);
\path[line width=0.30mm, draw=L, fill=F] (145.84,71.94) circle (1.00mm);
\path[line width=0.30mm, draw=L, fill=F] (150.29,66.64) circle (1.00mm);
\path[line width=0.30mm, draw=L, fill=F] (142.59,66.64) circle (1.00mm);
\path[line width=0.30mm, draw=L] (136.26,90.08) -- (124.97,66.81);
\path[line width=0.30mm, draw=L] (139.00,85.80) -- (130.10,67.15);
\path[line width=0.30mm, draw=L] (155.59,89.74) -- (169.28,67.32);
\path[line width=0.30mm, draw=L] (153.71,85.63) -- (165.00,67.15);
\path[line width=0.30mm, draw=L] (130.44,50.56) -- (162.26,50.21);
\path[line width=0.30mm, draw=L] (133.52,54.83) -- (158.67,55.01);
\path[line width=0.30mm, draw=L] (145.84,72.29) -- (142.08,84.43);
\path[line width=0.30mm, draw=L] (146.01,72.46) -- (148.75,84.43);
\path[line width=0.30mm, draw=L] (142.25,66.47) -- (131.81,64.42);
\path[line width=0.30mm, draw=L] (142.42,65.78) -- (134.89,58.08);
\path[line width=0.30mm, draw=L] (150.80,66.98) -- (161.24,65.10);
\path[line width=0.30mm, draw=L] (150.80,66.64) -- (159.01,59.80);
\path[line width=0.30mm, draw=L] (146.01,72.11) -- (142.59,67.15);
\path[line width=0.30mm, draw=L] (146.01,72.11) -- (150.29,66.98);
\draw(142.57,91.87) node[anchor=base west]{\fontsize{14.23}{17.07}\selectfont $K_a$};
\draw(121.06,54.28) node[anchor=base west]{\fontsize{14.23}{17.07}\selectfont $K_b$};
\draw(164.24,54.28) node[anchor=base west]{\fontsize{14.23}{17.07}\selectfont $K_c$};
\draw(139.22,70.29) node[anchor=base west]{\fontsize{14.23}{17.07}\selectfont $u_2$};
\draw(141.94,61.82) node[anchor=base west]{\fontsize{14.23}{17.07}\selectfont $u_1$};
\path[line width=0.30mm, draw=L] (142.42,65.78) -- (134.89,58.08);
\draw(150.46,68.01) node[anchor=base west]{\fontsize{14.23}{17.07}\selectfont $w$};
\draw(141.56,36.61) node[anchor=base west]{\fontsize{14.23}{17.07}\selectfont $\overline{H_{a,b,c}}$};
\end{tikzpicture}%
\caption{The graph $H_{a, b, c}$ and its complement, where $a+b+c=t-1$.}\label{fig-1}
\end{figure}
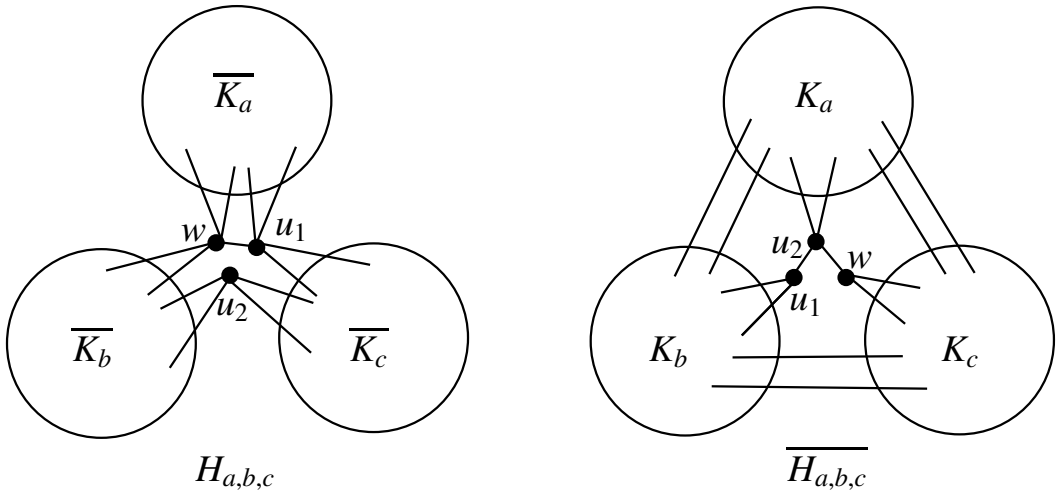

\section{Two properties of the $A_\alpha$-spectral extremal graph}
Recall that the \emph{$A_\alpha$-spectral extremal graph} is a graph with maximum $A_\alpha$-spectral radius among all $K_{s, t}$-minor free graphs of sufficiently order $n$ for $0 < \alpha<1$ and $2\leq s\leq t$.
In the following, we always assume that $G^*$ is the $A_\alpha$-spectral extremal graph.
In this section, we will prove that $G^*$ has a property of
local edge maximality. Meanwhile, we will apply  the double eigenvectors transformation technique to the $A_{\alpha}(G)$-matrix and prove that
$G^*$  has a property of local degree sequence majorization.

Let $\mathbf{x}=\left(x_{1}, x_{2}, \ldots, x_{n}\right)^{T}$ and $\mathbf{y}=\left(y_{1}, y_{2}, \ldots, y_{n}\right)^{T}$ be two non-increasing real vectors. We say $\mathbf{x}$ is weakly majorized by $\mathbf{y}$, denoted by $\mathbf{x} \prec_{w} \mathbf{y}$, if and only if $\sum_{i=1}^{k} x_{i} \leq \sum_{i=1}^{k} y_{i}$ for  $k=1,2, \ldots, n$. We say  $\mathbf{x}$ is majorized by $\mathbf{y}$, denoted  $\mathbf{x} \prec \mathbf{y}$, if and only if $\mathbf{x} \prec_{w} \mathbf{y}$ and $\sum_{i=1}^{n} x_{i}=\sum_{i=1}^{n} y_{i}$.

\begin{lem}(\cite{Lin})\label{lem::2.8}
Let $\mathbf{x}, \mathbf{y} \in R^{n}$ be two non-negative and non-increasing vectors. If $\mathbf{x} \prec_{w} \mathbf{y}$, then $\|\mathbf{x}\|_{k} \leq\|\mathbf{y}\|_{k}$ for $k>1$, with equality if and only if $\mathbf{x}=\mathbf{y}$.
\end{lem}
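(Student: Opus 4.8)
Since $t\mapsto t^{1/k}$ is strictly increasing on $[0,\infty)$, proving $\|\mathbf{x}\|_k\le\|\mathbf{y}\|_k$ together with the stated equality characterization is equivalent to proving $\sum_{i=1}^n x_i^{\,k}\le\sum_{i=1}^n y_i^{\,k}$, so I would work with the $k$-th powers. Put $\phi(t)=t^k$. Because $k>1$, $\phi$ is differentiable and strictly convex on $[0,\infty)$ (one has $\phi''(t)=k(k-1)t^{k-2}>0$ on $(0,\infty)$, and the tangent-line bound used below is also strict at the boundary point $t=0$), while its derivative $\phi'(t)=kt^{k-1}$ is non-negative and non-decreasing. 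The latter is the key structural fact: since $\mathbf{x}$ is non-increasing, the numbers $c_i:=\phi'(x_i)$ form a non-increasing, non-negative sequence.

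The main step is to combine the supporting-line inequality for $\phi$ with Abel summation. For each $i$, convexity gives $\phi(y_i)-\phi(x_i)\ge\phi'(x_i)(y_i-x_i)=c_i(y_i-x_i)$, so summing over $i$ yields $\sum_{i=1}^n\phi(y_i)-\sum_{i=1}^n\phi(x_i)\ge\sum_{i=1}^n c_i(y_i-x_i)$. Writing $D_j:=\sum_{i=1}^j(y_i-x_i)=\sum_{i=1}^j y_i-\sum_{i=1}^j x_i$ (with $D_0=0$), the hypothesis $\mathbf{x}\prec_w\mathbf{y}$ says exactly that $D_j\ge 0$ for all $j$. Abel summation then gives $\sum_{i=1}^n c_i(y_i-x_i)=c_nD_n+\sum_{i=1}^{n-1}(c_i-c_{i+1})D_i$, which is a sum of products of non-negative quantities and hence $\ge 0$. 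Therefore $\sum_{i=1}^n x_i^{\,k}\le\sum_{i=1}^n y_i^{\,k}$, i.e.\ $\|\mathbf{x}\|_k\le\|\mathbf{y}\|_k$.

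For the equality case, suppose $\|\mathbf{x}\|_k=\|\mathbf{y}\|_k$; then the chain of inequalities above collapses to equalities, and in particular the supporting-line bound is an equality for every $i$, namely $\phi(y_i)-\phi(x_i)=\phi'(x_i)(y_i-x_i)$. By strict convexity of $\phi$ on $[0,\infty)$ this forces $y_i=x_i$ for each $i$, so $\mathbf{x}=\mathbf{y}$; the converse is trivial. I expect no genuine obstacle here: the argument is routine once one fixes the tangent-line plus Abel-summation route (an alternative would be to produce $\mathbf{u}$ with $\mathbf{x}\le\mathbf{u}$ coordinatewise and $\mathbf{u}\prec\mathbf{y}$, then use monotonicity of $\phi$ together with Karamata's inequality, but that is longer). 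The only points requiring a line of care are the passage between $\|\cdot\|_k$ and its $k$-th power, and the strictness of the convexity bound at the boundary point $t=0$ needed for the equality case — both are immediate.
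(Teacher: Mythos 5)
Your proposal is correct. Note that the paper does not actually prove this lemma: it is quoted verbatim from the cited reference of Lin and Ning, so there is no in-paper argument to compare against. Your tangent-line-plus-Abel-summation proof is the standard Hardy--Littlewood--P\'olya/Tomi\'c--Weyl argument for the fact that weak majorization is preserved by sums of non-decreasing convex functions, and every step checks out: the coefficients $c_i=kx_i^{k-1}$ are non-negative and non-increasing precisely because $\mathbf{x}$ is non-increasing and $\phi'$ is non-decreasing, the partial sums $D_j$ are non-negative by $\mathbf{x}\prec_w\mathbf{y}$, and the Abel identity (with $D_0=0$) exhibits $\sum_i c_i(y_i-x_i)$ as a non-negative combination. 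The equality case is also handled correctly: equality of the $k$-norms forces termwise equality in the supporting-line bounds, and strict convexity of $t\mapsto t^k$ on $[0,\infty)$ (including the strictness at $t=0$, where $\phi'(0)=0$ but $\phi(y_i)>0$ for $y_i>0$) then gives $x_i=y_i$ for every $i$. The passage between $\|\cdot\|_k$ and its $k$-th power is harmless since both quantities are non-negative and $t\mapsto t^{1/k}$ is strictly increasing. In short, your blind proof supplies a complete, self-contained justification of a result the paper only cites.
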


The following lemma is from Exercises $5(i)$ on page $74$ of \cite{Zhan}.
\begin{lem}(\cite{Zhan})\label{lem::2.9}
Let $\mathbf{x}, \mathbf{y}, \mathbf{z} \in R^{n}$ be three non-increasing vectors. If $\mathbf{x} \prec \mathbf{y}$, then $\mathbf{x}^{T} \cdot \mathbf{z} \leq \mathbf{y}^{T} \cdot \mathbf{z}$.
\end{lem}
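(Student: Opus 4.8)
The plan is to prove the inequality by a summation-by-parts (Abel summation) argument, which is the standard route for majorization estimates of this type. First I would introduce the difference vector $d_i := x_i - y_i$ for $i = 1, \dots, n$ and its partial sums $S_k := \sum_{i=1}^{k} d_i$, with the convention $S_0 := 0$. The hypothesis $\mathbf{x} \prec \mathbf{y}$ is, by the definition of majorization recalled just before Lemma \ref{lem::2.8}, exactly the statement that $S_k \le 0$ for every $k \in \{1, 2, \dots, n-1\}$ and $S_n = 0$.

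Next I would rewrite the quantity to be bounded, $\mathbf{x}^{T}\mathbf{z} - \mathbf{y}^{T}\mathbf{z} = \sum_{i=1}^{n} d_i z_i$, using $d_i = S_i - S_{i-1}$ and regrouping the terms:
\[
\sum_{i=1}^{n} d_i z_i = \sum_{i=1}^{n} (S_i - S_{i-1}) z_i = \sum_{i=1}^{n-1} S_i (z_i - z_{i+1}) + S_n z_n - S_0 z_1 .
\]
Since $S_0 = 0$ and $S_n = 0$, the two boundary terms vanish, so this reduces to $\sum_{i=1}^{n-1} S_i (z_i - z_{i+1})$.

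Finally I would read off the sign of each summand: because $\mathbf{z}$ is non-increasing we have $z_i - z_{i+1} \ge 0$ for all $i$, and because $\mathbf{x} \prec \mathbf{y}$ we have $S_i \le 0$ for $i \le n-1$; hence every product $S_i(z_i - z_{i+1})$ is nonpositive, their sum is nonpositive, and therefore $\mathbf{x}^{T}\mathbf{z} - \mathbf{y}^{T}\mathbf{z} \le 0$, which is the claim.

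There is essentially no genuine obstacle in this argument; the only point deserving a moment's attention is the bookkeeping of the boundary terms $S_0 z_1$ and $S_n z_n$ in the Abel summation, both of which disappear — one by the convention $S_0 = 0$ and the other by the equality $S_n = 0$ built into the definition of majorization (as opposed to weak majorization). An alternative would be to derive the statement from Lemma \ref{lem::2.8} by a positive/negative-part decomposition, but the direct summation-by-parts proof above is shorter and self-contained, so that is the route I would take.
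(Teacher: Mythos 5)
Your Abel-summation argument is correct and complete: the identity $\sum_{i=1}^{n} d_i z_i = \sum_{i=1}^{n-1} S_i (z_i - z_{i+1})$ holds once the boundary terms vanish via $S_0 = 0$ and $S_n = 0$, and each summand is nonpositive since $S_i \leq 0$ and $z_i \geq z_{i+1}$. The paper itself gives no proof of this lemma (it is quoted from Zhan's book as an exercise), and your summation-by-parts route is the standard, self-contained way to establish it.
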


Set for short $\rho_{\alpha} = \rho_{\alpha} (G^{*}) $ and let $\mathbf{X}=(x_{v})_{v \in V(G^{*})} \in R^{n}$ be a Perron eigenvector of $A_{\alpha}(G^{*})$ corresponding to $\rho_{\alpha}$.  By Lemma \ref{lem::2.7},
$G^*$ contains a clique dominating set $K$ of size $s-1$. We immediately get that $G^{*}-K$ has the $(s, t)$-property from Lemma \ref{lem::2.10}. So, $\Delta(G^{*}-K)<t$. For convenience, we let $x_{0}=\sum\limits_{v\in K}x_{v}$. Furthermore, we have $\rho_{\alpha}\geq \rho_{\alpha}\left( K_{s-1} \vee \overline{K}_{n-s+1}\right)\geq \alpha(n-1)+(1-\alpha)(s-2)$ by Lemma \ref{lem::2.1'}. Thus we have the following result.

\begin{lem}\label{lem::3.1}
Let $H$ consist of some  components of $G^{*}-K$ with $|H| \leq N$ (a constant), $x_{1}=\max\limits_{v \in V(H) }x_{v}$ and
$x_{2}=\min\limits_{v \in V(H) }x_{v}$. If $a, b$ are two constants with $a>b.$ Then $a x_{2}>b x_{1}$ and $a x_{2}^{2}>b x_{1}^{2}$.
\end{lem}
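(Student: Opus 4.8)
The plan is to show that the Perron entries on $V(H)$ are all essentially equal, namely that $x_2/x_1\to 1$ as $n\to\infty$ uniformly over every admissible $H$. Granting this, since $a>b$ we have $a\cdot\frac{x_2}{x_1}-b\to a-b>0$ and $a\cdot\bigl(\frac{x_2}{x_1}\bigr)^2-b\to a-b>0$, so for $n$ large both quantities are positive; multiplying the first by $x_1>0$ and the second by $x_1^2>0$ gives $ax_2>bx_1$ and $ax_2^2>bx_1^2$ at once, with no need to distinguish cases according to the signs of $a,b$.

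To prove $x_2/x_1\to1$ I would apply the eigen-equation $\rho_\alpha x_u=\alpha d_{G^*}(u)x_u+(1-\alpha)\sum_{z\sim u}x_z$ at vertices $v,w\in V(H)$ with $x_v=x_1$ and $x_w=x_2$. Because $K$ is a clique dominating set, every $u\in V(G^*-K)$, and in particular every $u\in V(H)$, is adjacent to all $s-1$ vertices of $K$, so $d_{G^*}(u)\ge s-1$; and since $H$ is a union of components of $G^*-K$ while $\Delta(G^*-K)<t$, all remaining neighbours of $u$ lie in $V(H)$ and there are at most $t-1$ of them, whence $d_{G^*}(u)\le s+t-2$ and $\sum_{z\sim u}x_z=x_0+\sum_{z\in N_{G^*-K}(u)}x_z$ with the latter sum over at most $t-1$ entries of $V(H)$. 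Bounding that sum above by $(t-1)x_1$ at $v$ and below by $0$ at $w$, and using $x_0>0$ (as $|K|=s-1\ge1$ and $\mathbf{X}>0$), I obtain
$$
\bigl(\rho_\alpha-\alpha(s+t-2)-(1-\alpha)(t-1)\bigr)x_1\ \le\ (1-\alpha)x_0\ \le\ \bigl(\rho_\alpha-\alpha(s-1)\bigr)x_2 .
$$

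Recalling the bound $\rho_\alpha\ge\alpha(n-1)+(1-\alpha)(s-2)$ stated just before the lemma, which grows without bound in $n$, the coefficient of $x_1$ above is positive once $n$ is large, so dividing yields
$$
1\ \ge\ \frac{x_2}{x_1}\ \ge\ \frac{\rho_\alpha-\alpha(s+t-2)-(1-\alpha)(t-1)}{\rho_\alpha-\alpha(s-1)}\ =\ 1-\frac{t-1}{\rho_\alpha-\alpha(s-1)}\ \longrightarrow\ 1 .
$$
Since this lower bound involves only the fixed parameters $s,t,\alpha$ and the single quantity $\rho_\alpha=\rho_\alpha(G^*)$, and not the particular components making up $H$, the convergence $x_2/x_1\to1$ is uniform over all $H$ with $|H|\le N$, which is precisely what the first paragraph needs.

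I do not foresee a genuine obstacle here; the one point requiring care is that the threshold on $n$ must be independent of which components constitute $H$. This is automatic because the constant $N$ never intervenes: the estimate on $x_2/x_1$ is controlled entirely by $s,t,\alpha$ and by $\rho_\alpha$, which depends on $G^*$ alone. (A secondary bookkeeping point is to record, via $\rho_\alpha\ge\alpha(n-1)+(1-\alpha)(s-2)$, an explicit lower threshold for $n$ of the form $s+\tfrac{a(t-1)}{\alpha(a-b)}$ in the case $a>0$, the cases $a\le 0$ being immediate from $x_2/x_1\le 1$.)
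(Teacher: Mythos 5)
Your proposal is correct and is essentially the paper's own argument: both bound $x_1$ from above and $x_2$ from below via the eigen-equations at the extreme vertices of $H$, using that every vertex of $H$ sees all of $K$ and at most $t-1$ vertices of $H$, and then let $\rho_{\alpha}\geq\alpha(n-1)+(1-\alpha)(s-2)$ grow with $n$. Your reformulation as $x_2/x_1\geq 1-\frac{t-1}{\rho_{\alpha}-\alpha(s-1)}\to 1$ is just a rearrangement of the paper's inequalities (its bounds on $x_1$ and $x_2$ in terms of $x_0$), with the small bonus that your limit phrasing handles the signs of $a,b$ uniformly.
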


\begin{proof}
Since $H\subseteq G^{*}-K$, we have $\Delta(H)\leq\Delta(G^{*}-K) < t$.
For each $v \in V(H)$, we see that
\begin{equation}\label{equ::a}
s-1\leq d_{G^{*}}(v)=d_{H}(v)+s-1 < t+s-1.
\end{equation}
Note that $x_{0}=\sum\limits_{v\in K}x_{v}$. Therefore, we obtain $\rho_{\alpha} x_{1}<(\alpha(t+s-1)+(1-\alpha)t)x_{1}+(1-\alpha)x_{0}$ and $\rho_{\alpha} x_{2} \geq \alpha(s-1)x_{2}+(1-\alpha)x_{0}$, and thus
\begin{equation}\label{equ::4}
x_{1}<\frac{(1-\alpha)x_{0}}{\rho_{\alpha}-(\alpha(t+s-1)+(1-\alpha)t)} \quad \text { and } \quad x_{2} \geq \frac{(1-\alpha)x_{0}}{\rho_{\alpha}-\alpha(s-1)}.
\end{equation}
Since $\rho_{\alpha}\geq \alpha(n-1)+(1-\alpha)(s-2)$, $0<\alpha<1$, $n$ is sufficiently large and $a, b, s, t$ are constants, we can easily get that
$$
ax_{2}-bx_{1}>(1-\alpha)x_{0}(\frac{a}{\rho_{\alpha}-\alpha(s-1)}-\frac{b}{\rho_{\alpha}-(\alpha(t+s-1)+(1-\alpha)t)})>0.
$$
Similarly, we can obtain $a x_{2}^{2}>b x_{1}^{2}$.
\end{proof}

The following lemma implies that $G^{*}$ has a property of local edge maximality.
\begin{lem}\label{lem::3.2}
Let $H$ consist of some  components of $G^{*}-K$ with $|H| \leq N$ (a constant) and $H^{\prime}$ be a graph with $V(H^{\prime})=V(H)$. If $H^{\prime}$ has the $(s, t)$-property, then $e(H^{\prime}) \leq e(H)$.
\end{lem}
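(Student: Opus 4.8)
The plan is to argue by contradiction: assume $e(H')>e(H)$, build a competitor graph $G'$ by ``swapping in'' $H'$, and show $\rho_\alpha(G')>\rho_\alpha(G^*)$, contradicting the maximality of $G^*$. First I would record the two structural facts that make the swap harmless. Since $H$ is a union of components of $G^*-K$, we have $G^*[V(H)]=H$, and the only edges of $G^*$ leaving $V(H)$ go into $K$, all of which are present because $K$ is a clique dominating set. Define $G'$ on $V(G^*)$ by $E(G')=\bigl(E(G^*)\setminus E(H)\bigr)\cup E(H')$; thus $G'$ agrees with $G^*$ outside $V(H)$ and $G'[V(H)]=H'$, and no new edges leave $V(H)$. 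Then $K$ is still a clique dominating set of $G'$, and every component of $G'-K$ is either a component of $G^*-K$ disjoint from $V(H)$ (which inherits the $(s,t)$-property from $G^*-K$, since being $K_{a,b}$-minor free passes to subgraphs) or a component of $H'$ (which has the $(s,t)$-property because $H'$ does, as a connected $K_{a,b}$-minor must lie inside a single component). Hence $G'-K$ has the $(s,t)$-property, so by Lemma~\ref{lem::2.10} the graph $G'$ is $K_{s,t}$-minor free of order $n$, and therefore $\rho_\alpha(G')\le\rho_\alpha(G^*)$ by the choice of $G^*$.

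For the spectral comparison I would use the Rayleigh quotient with the positive Perron vector $\mathbf{X}$ of $A_\alpha(G^*)$. Writing $f(a,b):=\alpha a^2+\alpha b^2+2(1-\alpha)ab$, we have $\mathbf{Y}^{T}A_\alpha(G)\mathbf{Y}=\sum_{uv\in E(G)}f(y_u,y_v)$ for any graph $G$ on $V(G^*)$ and any vector $\mathbf{Y}$, so, since $G'$ and $G^*$ differ only in edges inside $V(H)$, $\mathbf{X}^{T}A_\alpha(G')\mathbf{X}-\rho_\alpha\|\mathbf{X}\|^2=\sum_{uv\in E(H')}f(x_u,x_v)-\sum_{uv\in E(H)}f(x_u,x_v)$. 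Let $x_1=\max_{v\in V(H)}x_v$ and $x_2=\min_{v\in V(H)}x_v$. Using $0<\alpha<1$ and $a^2+b^2\ge 2ab$ together with monotonicity, every term $f(x_u,x_v)$ with $u,v\in V(H)$ satisfies $2x_2^2\le f(x_u,x_v)\le 2x_1^2$, so the difference above is at least $2e(H')x_2^2-2e(H)x_1^2$. Since $|H|=|H'|\le N$, both $e(H')$ and $e(H)$ are constants bounded by $\binom{N}{2}$, and $e(H')>e(H)$; hence Lemma~\ref{lem::3.1} (with $a=e(H')$, $b=e(H)$) gives $e(H')x_2^2>e(H)x_1^2$. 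Therefore $\mathbf{X}^{T}A_\alpha(G')\mathbf{X}>\rho_\alpha\|\mathbf{X}\|^2$, so $\rho_\alpha(G')\ge\mathbf{X}^{T}A_\alpha(G')\mathbf{X}/\|\mathbf{X}\|^2>\rho_\alpha(G^*)$, the desired contradiction. Consequently $e(H')\le e(H)$.

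The main obstacle is the spectral step. One cannot simply invoke ``adding edges increases the $A_\alpha$-index'': $G'$ is obtained by a swap rather than by pure edge addition, and $A_\alpha(G')-A_\alpha(G^*)$ need not be positive semidefinite when $\alpha<1/2$. Working directly with the quadratic form of the Perron vector of $G^*$ circumvents this, and the decisive quantitative input is Lemma~\ref{lem::3.1}: because $n$ is large, the small eigenvector entries on $V(H)$ are so close to being equal that the extremal factor $x_1^2/x_2^2$ stays below $1+\binom{N}{2}^{-1}$, which is exactly what is needed for a gain of a single edge to dominate.
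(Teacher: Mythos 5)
Your proposal is correct and follows essentially the same route as the paper: swap $E(H)$ for $E(H')$, note via Lemma~\ref{lem::2.10} that the new graph is still $K_{s,t}$-minor free, and bound $\mathbf{X}^{T}(A_{\alpha}(G')-A_{\alpha}(G^{*}))\mathbf{X}\geq 2e(H')x_2^2-2e(H)x_1^2>0$ using Lemma~\ref{lem::3.1}, contradicting maximality. Your extra care in checking that $G'-K$ has the $(s,t)$-property and in bounding each edge term $\alpha x_u^2+2(1-\alpha)x_ux_v+\alpha x_v^2$ between $2x_2^2$ and $2x_1^2$ only makes explicit steps the paper leaves implicit.
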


\begin{proof}
Suppose to the contrary that $e(H^{\prime})>e(H)$. Let $G^{\prime}=G^{*}-E(H)+E\left( H^{\prime}\right) $. By Lemma \ref{lem::2.10}, $G^{\prime}$ is $K_{s,t}$-minor free since $H^{\prime}$ has the $(s, t)$-property.  Now, by Lemma \ref{lem::3.1} we obtain
$$
\begin{aligned}
\rho_{\alpha}(G^{\prime})-\rho_{\alpha} & \geq \mathbf{X}^{T}(A_{\alpha}(G^{\prime})-A_{\alpha}(G^{*})) \mathbf{X}\\
&=\sum_{u v \in E(H^{\prime})}(\alpha x_{u}^{2}+2(1-\alpha)x_{u}x_{v}+\alpha x_{v}^{2})-\sum_{u v \in E(H)} (\alpha x_{u}^{2}+2(1-\alpha)x_{u}x_{v}+\alpha x_{v}^{2}) \\
& \geq 2 e(H^{\prime}) x_{2}^{2}-2 e(H) x_{1}^{2}>0,
\end{aligned}
$$
which contradicts the maximality of $\rho_{\alpha}$.
\end{proof}

\begin{lem}\label{lem::3.3}
Let $H$ consist of some connected components of $G^{*}-K$ with $|H| \leq N$ (a constant), $H^{\prime}$ be a graph with $V\left(H^{\prime}\right)=V(H)$ and $e\left(H^{\prime}\right)=e\left( H\right)$. If $H^{\prime}$ has the $(s, t)$-property, then
\begin{equation}\label{equ::5}
\sum_{u v \in E\left(H^{\prime}\right)} (d_{H}(u)+ d_{H}(v)) \leq \sum_{u v \in E(H)} (d_{H}(u)+ d_{H}(v)).
\end{equation}
Moreover, if (\ref{equ::5}) holds in equality, then
\begin{equation}\label{equ::6}
\sum_{u v \in E\left(H^{\prime}\right)}(d_{H^{\prime}}(u)+ d_{H^{\prime}}(v)) \leq \sum_{u v \in E(H)}(d_{H}(u)+ d_{H}(v)).
\end{equation}
\end{lem}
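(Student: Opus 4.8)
The plan is to test $G^{*}$ against the graph $G'=G^{*}-E(H)+E(H')$. Since $e(H')=e(H)$ and $K$ is still a clique dominating set, $G'-K$ has the $(s,t)$-property, so by Lemma~\ref{lem::2.10} $G'$ is $K_{s,t}$-minor free and hence $\rho_{\alpha}(G')\le\rho_{\alpha}$. Put $\theta=\rho_{\alpha}-\alpha(s-1)$ and $c=(1-\alpha)x_{0}/\theta$; then $\theta>0$, $c>0$, and $\theta\to\infty$ because $\rho_{\alpha}\ge\alpha(n-1)+(1-\alpha)(s-2)$. For $v\in V(H)$ the eigen-equation reads $\theta x_{v}=(1-\alpha)x_{0}+\alpha d_{H}(v)x_{v}+(1-\alpha)\sum_{u\in N_{H}(v)}x_{u}$; feeding in the bounds \eqref{equ::4} (which give $0\le x_{v}-c\le C_{0}c/\theta$ uniformly) and substituting once yields
$$x_{v}=c+\tfrac{c}{\theta}\,d_{H}(v)+r_{v},\qquad 0\le r_{v}\le C_{0}\,c/\theta^{2},$$
with $C_{0}=C_{0}(s,t)$ independent of $n$. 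All error constants below depend only on $s,t$ and the bound $N$ on $|H|$, since there are finitely many isomorphism types of the pair $(H,H')$.

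First I prove \eqref{equ::5}. From $\|\mathbf{X}\|=1$ (as in the proof of Lemma~\ref{lem::3.2}) we get $\mathbf{X}^{T}A_{\alpha}(G')\mathbf{X}\le\rho_{\alpha}(G')\le\rho_{\alpha}=\mathbf{X}^{T}A_{\alpha}(G^{*})\mathbf{X}$, that is,
$$\sum_{uv\in E(H')}\!\big(\alpha x_{u}^{2}+2(1-\alpha)x_{u}x_{v}+\alpha x_{v}^{2}\big)\ \le\ \sum_{uv\in E(H)}\!\big(\alpha x_{u}^{2}+2(1-\alpha)x_{u}x_{v}+\alpha x_{v}^{2}\big).$$
Substituting $x_{v}=c+\eta_{v}$: the constant $2c^{2}$ per edge cancels since $e(H')=e(H)$, the terms quadratic in $\eta$ contribute $O(c^{2}/\theta^{2})$, and the linear-in-$\eta$ terms equal $\frac{2c^{2}}{\theta}\big(\sum_{E(H')}(d_{H}(u)+d_{H}(v))-\sum_{E(H)}(d_{H}(u)+d_{H}(v))\big)+O(c^{2}/\theta^{2})$ by the linearization. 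Dividing by $2c^{2}/\theta>0$ gives $\sum_{uv\in E(H')}(d_{H}(u)+d_{H}(v))-\sum_{uv\in E(H)}(d_{H}(u)+d_{H}(v))\le C_{1}/\theta$. The left side is an integer depending only on $(H,H')$ and $C_{1}/\theta\to0$, so it is $\le0$, which is \eqref{equ::5}.

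Now assume equality in \eqref{equ::5}; here the double eigenvectors transformation enters. Let $\mathbf{Y}$ be supported on $V(H)$ with $y_{v}=d_{H'}(v)-d_{H}(v)$, and for a parameter $\epsilon>0$ set $\mathbf{Z}=\mathbf{X}+\epsilon\mathbf{Y}\,(\ne0)$, so that $\rho_{\alpha}(G')\ge\mathbf{Z}^{T}A_{\alpha}(G')\mathbf{Z}/\mathbf{Z}^{T}\mathbf{Z}$. Using $A_{\alpha}(G^{*})\mathbf{X}=\rho_{\alpha}\mathbf{X}$ one computes
$$\mathbf{Z}^{T}A_{\alpha}(G')\mathbf{Z}-\rho_{\alpha}\,\mathbf{Z}^{T}\mathbf{Z}\ =\ -\Delta_{w}+2\epsilon P+\epsilon^{2}\big(\mathbf{Y}^{T}A_{\alpha}(G')\mathbf{Y}-\rho_{\alpha}\|\mathbf{Y}\|^{2}\big),$$
where $\Delta_{w}=\mathbf{X}^{T}(A_{\alpha}(G^{*})-A_{\alpha}(G'))\mathbf{X}\ge0$ and $P=\mathbf{X}^{T}(A_{\alpha}(G')-A_{\alpha}(G^{*}))\mathbf{Y}$. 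With $S=\sum_{v}(d_{H'}(v)-d_{H}(v))^{2}=\|\mathbf{Y}\|^{2}$ (a non-negative integer), the linearization of $\mathbf{X}$ gives $P=cS+O(c/\theta)$ and $\mathbf{Y}^{T}A_{\alpha}(G')\mathbf{Y}=O(1)$, while equality in \eqref{equ::5} makes $\Delta_{w}=O(c^{2}/\theta^{2})$ (in general its leading term is $\frac{2c^{2}}{\theta}(\sum_{v}d_{H}(v)^{2}-\sum_{v}d_{H}(v)d_{H'}(v))$). Maximizing the downward parabola in $\epsilon$ at $\epsilon=c/\rho_{\alpha}$ yields $\rho_{\alpha}(G')-\rho_{\alpha}\ge \frac{c^{2}S}{\rho_{\alpha}}-C_{2}\frac{c^{2}}{\theta^{2}}$. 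If $d_{H'}\not\equiv d_{H}$ on $V(H)$ then $S\ge1$, and since $\rho_{\alpha}\sim\theta\to\infty$ the right side is positive for $n$ large, contradicting $\rho_{\alpha}(G')\le\rho_{\alpha}$. Hence $d_{H'}(v)=d_{H}(v)$ for all $v\in V(H)$, so $\sum_{uv\in E(H')}(d_{H'}(u)+d_{H'}(v))=\sum_{v}d_{H'}(v)^{2}=\sum_{v}d_{H}(v)^{2}=\sum_{uv\in E(H)}(d_{H}(u)+d_{H}(v))$, which is \eqref{equ::6}.

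The delicate part is the last paragraph: one must keep every $O$-constant independent of $n$, so that the fixed integers in the displayed inequalities are truly forced to be $\le0$ once $n$ is large; and one must verify that with $\epsilon\asymp c/\rho_{\alpha}$ the positive term $2\epsilon cS$ beats both the negative curvature term $-\epsilon^{2}\rho_{\alpha}S$ and the leftover $-\Delta_{w}$ — which is exactly why equality in \eqref{equ::5}, annihilating the $\Theta(c^{2}/\theta)$ part of $\Delta_{w}$, is used. (The same computation without the equality hypothesis produces a gain $\asymp\frac{c^{2}}{\theta}(\sum_{v}d_{H'}(v)^{2}-\sum_{v}d_{H}(v)^{2})$, which would reprove \eqref{equ::5}; the two steps are kept separate to match the statement.)
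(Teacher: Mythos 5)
Your proof is correct, and it splits into two parts of different character. For (\ref{equ::5}) you follow essentially the paper's route: the Rayleigh bound $\mathbf{X}^{T}A_{\alpha}(G')\mathbf{X}\le\rho_{\alpha}(G')\le\rho_{\alpha}$ together with the entrywise Perron-vector estimates (\ref{equ::4}), only packaged as the expansion $x_{v}=c+\frac{c}{\theta}d_{H}(v)+O(c/\theta^{2})$ plus an integrality argument, where the paper instead compares the explicit quantities $A_{1}>A_{2}$, $A_{3}>A_{4}$ after assuming $a\ge b+1$; same idea, different bookkeeping. For (\ref{equ::6}) your route is genuinely different: the paper's ``double eigenvectors transformation'' takes $\mathbf{Y}$ to be the Perron vector of $A_{\alpha}(G')$, expands $(\rho_{\alpha}'-\rho_{\alpha})\mathbf{Y}^{T}\mathbf{X}=(A_{\alpha}(G')\mathbf{Y})^{T}\mathbf{X}-\mathbf{Y}^{T}(A_{\alpha}(G^{*})\mathbf{X})$, and needs entrywise control of both Perron vectors ((\ref{equ::8})--(\ref{equ::8''}) and (\ref{equ::8'})--(\ref{equ::8'''})) to contradict the failure of (\ref{equ::6}); you instead perturb the single vector $\mathbf{X}$ by $\epsilon\mathbf{Y}$ with $y_{v}=d_{H'}(v)-d_{H}(v)$ and use the Rayleigh quotient of $G'$. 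Your identity $\mathbf{Z}^{T}A_{\alpha}(G')\mathbf{Z}-\rho_{\alpha}\mathbf{Z}^{T}\mathbf{Z}=-\Delta_{w}+2\epsilon P+\epsilon^{2}(\cdots)$ is exact, and the estimates $P=cS+O(c/\theta)$ and $\Delta_{w}=O(c^{2}/\theta^{2})$ under equality in (\ref{equ::5}) check out, so $S\ge 1$ would indeed force $\rho_{\alpha}(G')>\rho_{\alpha}$ for large $n$. What this buys: no entrywise analysis of the Perron vector of $G'$ is needed, and you obtain the formally stronger conclusion $d_{H'}\equiv d_{H}$ — which, note, is actually equivalent to (\ref{equ::6}) under equality in (\ref{equ::5}), since $\sum_{v}d_{H}d_{H'}=\sum_{v}d_{H}^{2}$ together with $\sum_{v}d_{H'}^{2}\le\sum_{v}d_{H}^{2}$ forces $d_{H'}=d_{H}$ by Cauchy--Schwarz. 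The price is second-order bookkeeping in $\epsilon$, and two cosmetic points you should tidy: divide by $\mathbf{Z}^{T}\mathbf{Z}=1+O(\epsilon)>0$ before stating the lower bound on $\rho_{\alpha}(G')-\rho_{\alpha}$ (this changes only constants, not signs), and state explicitly that all $O$-constants depend only on $s,t,N$ so that ``$n$ sufficiently large'' is uniform over the finitely many relevant pairs $(H,H')$.
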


\begin{proof}
Let $G^{\prime}=G^{*}-E(H)+E\left(H^{\prime}\right)$ and $\rho_{\alpha}^{\prime}=\rho_{\alpha}(G^{\prime})$.
Clearly, $G^{\prime}$ is also $K_{s, t}$-minor free.
Let $x_{1}=\max\limits_{v \in V(H) }x_{v}$ and $ x_{2}=\min\limits_{v \in V(H) }x_{v}$. Note that $x_{0}=\sum\limits_{v\in K} x_{v}$.
Then by (\ref{equ::a}) and the eigen-equations of $A_{\alpha}(G^{*})$, we have
\begin{equation}\label{equ::8}
\frac{(1-\alpha)(d_{H}(v)x_{2} +x_{0})}{\rho_{\alpha}-\alpha(s-1)}\leq\frac{(1-\alpha)(d_{H}(v)x_{2} +x_{0})}{\rho_{\alpha}-\alpha d_{G^{*}}(v)}\leq x_{v},
\end{equation}
and
\begin{equation}\label{equ::8''}
 x_{v}\leq \frac{(1-\alpha)(d_{H}(v)x_{1} +x_{0})}{\rho_{\alpha}-\alpha d_{G^{*}}(v)}< \frac{(1-\alpha)(d_{H}(v)x_{1} +x_{0})}{\rho_{\alpha}-(\alpha(t+s-1)+(1-\alpha)t)},
\end{equation}
for each vertex $v \in V(H)$.
Let
$a=\sum\limits_{u v \in E(H^{\prime})} (d_{H}(u)+ d_{H}(v))$,
$b=\sum\limits_{u v \in E(H)} (d_{H}(u)+ d_{H}(v))$,
$c_{1}=\sum\limits_{u v \in E(H)} \left( d_{H}^{2}(u)+ d_{H}^{2}(v)\right) $,
$c_{2}=\sum\limits_{u v \in E(H)} d_{H}(u)d_{H}(v)$.
Firstly, we will show that $a\leq b$. Suppose to the contrary that $a\geq b+1$.
By Lemma \ref{lem::3.1}, we obtain $\left(a-\frac{1}{2}\right) x_{2}>b x_{1}$. Moreover, by (\ref{equ::4}) we have
$$x_{0} x_{2}-c_{1} x_{1}^{2}>(1-\alpha)x_{0}^{2}\left(\frac{1}{\rho_{\alpha}-\alpha(s-1)}-\frac{(1-\alpha)c_{1}}{(\rho_{\alpha}-(\alpha(t+s-1)+(1-\alpha)t))^{2}} \right) >0,$$
since $\rho_{\alpha}\geq\alpha(n-1)+(1-\alpha)(s-2)$, $0<\alpha<1$, $n$ is sufficiently large, and  $s, t, c_{1}$ are constants.
Similarly, we can obtain $\frac{1}{2}x_{0} x_{2}-c_{2} x_{1}^{2}>0.$

Now, we denote
$$A_{1}=\sum\limits_{uv\in E(H^{\prime})}( (d_{H}(u)x_{2}+x_{0})^{2}+(d_{H}(v)x_{2}+x_{0})^{2}),\\
A_{2}=\sum\limits_{uv\in E(H)}\left( (d_{H}(u)x_{1} +x_{0})^{2}+(d_{H}(v)x_{1} +x_{0})^{2}\right),$$
$$A_{3}=\sum\limits_{uv\in E(H^{\prime})}(  (d_{H}(u)x_{2}+x_{0})(d_{H}(v)x_{2}+x_{0})) \ \ \mbox{ and} \ \ A_{4}=\sum\limits_{uv\in E(H)}\left( (d_{H}(u)x_{1} +x_{0})(d_{H}(v)x_{1} +x_{0})\right).
$$
By simple scaling, we see that
$A_{1}\geq 2ax_{2}x_{0}+2e(H^{\prime})x_{0}^{2}$, $A_{2}= 2bx_{1}x_{0}+2e(H)x_{0}^{2}+c_{1}x_{1}^{2}$, $A_{3}\geq ax_{2}x_{0}+e(H^{\prime})x_{0}^{2}$ and $A_{4}= bx_{1}x_{0}+e(H)x_{0}^{2}+c_{2}x_{1}^{2}$.
Since $e\left(H^{\prime}\right)=e\left( H\right) $, we get that
\begin{align*}
A_{1}-A_{2}
&\geq (2ax_{2}x_{0}+2e(H^{\prime})x_{0}^{2})-(2bx_{1}x_{0}+2e(H)x_{0}^{2}+c_{1}x_{1}^{2})\notag\\
&=2ax_{2}x_{0}-2bx_{1}x_{0}-c_{1}x_{1}^{2}=2x_{0}\left(\left(a-\frac{1}{2}\right) x_{2}-b x_{1}\right)+(x_{0} x_{2}-c_{1} x_{1}^{2})>0,
\end{align*}
and
\begin{align*}
A_{3}-A_{4}
&\geq (ax_{2}x_{0}+e(H^{\prime})x_{0}^{2})-(bx_{1}x_{0}+e(H)x_{0}^{2}+c_{2}x_{1}^{2})\notag\\
&=ax_{2}x_{0}-bx_{1}x_{0}-c_{2}x_{1}^{2}=x_{0}\left(\left(a-\frac{1}{2}\right) x_{2}-b x_{1}\right)+(\frac{1}{2}x_{0} x_{2}-c_{2} x_{1}^{2})>0,
\end{align*}
which include that $A_{1}>A_{2}$ and $A_{3}>A_{4}$.
Furthermore, let
$$B_{1}= \sum_{uv\in E(H^{\prime})}\left( \frac{(d_{H}(u)x_{2}+x_{0})^{2}+(d_{H}(v)x_{2}+x_{0})^{2}}{(\rho_{\alpha}-\alpha(s-1))^{2}}\right)-\sum_{uv\in E(H)}\left(\frac{(d_{H}(u)x_{1} +x_{0})^{2}+(d_{H}(v)x_{1} +x_{0})^{2}}{(\rho_{\alpha}-(\alpha(t+s-1)+(1-\alpha)t))^{2}}\right),$$
and
$$
B_{2}=\sum_{u v \in E(H^{\prime})}\frac{(d_{H}(u)x_{2}+x_{0})(d_{H}(v)x_{2}+x_{0})}{(\rho_{\alpha}-\alpha(s-1))^{2}}-\sum_{u v \in E(H)} \frac{(d_{H}(u)x_{1} +x_{0})(d_{H}(v)x_{1} +x_{0})}{(\rho_{\alpha}-(\alpha(t+s-1)+(1-\alpha)t))^{2}}.$$
Notice that  $\rho_{\alpha}\geq\alpha(n-1)+(1-\alpha)(s-2)$, $0<\alpha<1$, $n$ is sufficiently large. Then from (\ref{equ::8})-(\ref{equ::8''}), we have
\begin{align*}
\rho_{\alpha}^{\prime}-\rho_{\alpha}\geq& \mathbf{X}^{T}(A_{\alpha}(G^{\prime})-A_{\alpha}(G^{*}))\mathbf{X}\\
=&\sum_{u v \in E(H^{\prime})}(\alpha x_{u}^{2}+2(1-\alpha)x_{u}x_{v}+\alpha x_{v}^{2})-\sum_{u v \in E(H)} (\alpha x_{u}^{2}+2(1-\alpha)x_{u}x_{v}+\alpha x_{v}^{2})\notag\\
=&\alpha\left( \sum_{u v \in E(H^{\prime})}( x_{u}^{2}+ x_{v}^{2})-\sum_{u v \in E(H)} (x_{u}^{2}+x_{v}^{2})\right)+2(1-\alpha)\left(\sum_{u v \in E(H^{\prime})}x_{u}x_{v}-\sum_{u v \in E(H)} x_{u}x_{v} \right) \\
\geq&\alpha(1-\alpha)^{2}B_{1}+2(1-\alpha)^{3}B_{2}\\
=&\alpha(1-\alpha)^{2}\left( \frac{A_{1}}{(\rho_{\alpha}-\alpha(s-1))^{2}}-\frac{A_{2}}{(\rho_{\alpha}-(\alpha(t+s-1)+(1-\alpha)t))^{2}}\right) \\
&+2(1-\alpha)^{3}\left(  \frac{A_{3}}{(\rho_{\alpha}-\alpha(s-1))^{2}}-\frac{A_{4}}{(\rho_{\alpha}-(\alpha(t+s-1)+(1-\alpha)t))^{2}}\right) \\
>&0,\label{equ::6'}
\end{align*}
which contradicts the maximality of $\rho_{\alpha}$.
So (\ref{equ::5}) holds.
	
We next show that the inequality (\ref{equ::6}) holds when $a=b$.
Let $\mathbf{Y}=(y_{v})_{v \in V(G^{*})} \in R^{n}$ be a Perron vector of $A_{\alpha}(G^{\prime})$ corresponding to $\rho_{\alpha}^{\prime}$.
Assume that $y_{0}=\sum\limits_{v \in K} y_{v}$, $y_{1}=\max\limits_{v \in V\left(H^{\prime}\right)} y_{v}$
 and $y_{2}=\min\limits_{v \in V\left(H^{\prime}\right)} y_{v}$. Similar as (\ref{equ::4}), we obtain
\begin{equation}\label{equ::14}
y_{1}<\frac{(1-\alpha)y_{0}}{\rho_{\alpha}^{\prime}-(\alpha(t+s-1)+(1-\alpha)t)} \quad \text { and } \quad y_{2} \geq \frac{(1-\alpha)y_{0}}{\rho^{\prime}_{\alpha}-\alpha(s-1)}.
\end{equation}
Moreover, for any $v\in V(H^{\prime})$  we have
\begin{equation}\label{equ::8'}
\frac{(1-\alpha)(d_{H^{\prime}}(v)y_{2} +y_{0})}{\rho^{\prime}_{\alpha}-\alpha(s-1)}\leq\frac{(1-\alpha)(d_{H^{\prime}}(v)y_{2} +y_{0})}{\rho^{\prime}_{\alpha}-\alpha d_{G^{\prime}}(v)}\leq y_{v},
\end{equation}
and
\begin{equation}\label{equ::8'''}
y_{v}\leq \frac{(1-\alpha)(d_{H^{\prime}}(v)y_{1} +y_{0})}{\rho^{\prime}_{\alpha}-\alpha d_{G^{\prime}}(v)}<\frac{(1-\alpha)(d_{H^{\prime}}(v)y_{1} +y_{0})}{\rho^{\prime}_{\alpha}-(\alpha(t+s-1)+(1-\alpha)t)}.
\end{equation}
Now, we let
\begin{align*}
a^{\prime}&=\sum\limits_{u v \in E(H^{\prime})} (d_{H^{\prime}}(u)+d_{H^{\prime}}(v)),&
b^{\prime}&=\sum\limits_{u v \in E(H)} (d_{H^{\prime}}(u)+d_{H^{\prime}}(v)),\\
c^{\prime}_{1}&=\sum\limits_{u v \in E(H)}(d_{H}(u) d_{H^{\prime}}(v)+d_{H}(v) d_{H^{\prime}}(u)),&
c^{\prime}_{2}&=\sum\limits_{u\in V(H)} d_{H}^{2}(u)d_{H^{\prime}}(u).
\end{align*}
Since $V\left(H^{\prime}\right)=V(H)$, we have
\begin{align*}a&=\sum_{uv \in E\left(H^{\prime}\right)} (d_{H}(u)+d_{H}(v))=\sum_{u \in V\left(H^{\prime}\right)} d_{H}(u) d_{H^{\prime}}(u)=\sum_{u \in V(H)} d_{H}(u) d_{H^{\prime}}(u)\\
&=\sum_{u v \in E(H)} (d_{H^{\prime}}(u)+d_{H^{\prime}}(v))=b^{\prime}.
\end{align*}
Furthermore, we see that
$\sum\limits_{u \in V\left(H^{\prime}\right)} d_{H^{\prime}}^{2}(u)=\sum\limits_{uv \in E\left(H^{\prime}\right)} (d_{H^{\prime}}(u)+d_{H^{\prime}}(v))=a^{\prime}$
and $\sum\limits_{u \in V\left(H\right)} d_{H}^{2}(u)= \sum\limits_{uv \in E\left(H\right)} (d_{H}(u)+d_{H}(v))=b$.
Now we denote
\begin{equation*}
\begin{aligned}
A^{\prime}_{1}=&\sum_{u v \in E(H^{\prime})}((x_{0}+d_{H}(u) x_{2})(y_{0}+d_{H^{\prime}}(v) y_{2})+(x_{0}+d_{H}(v) x_{2})(y_{0}+d_{H^{\prime}}(u) y_{2}))\\
A^{\prime}_{2}=&\sum_{u\in V(H^{\prime})} d_{H^{\prime}}(u) \cdot (d_{H}(u)x_{2} +x_{0}) \cdot(d_{H^{\prime}}(u)y_{2} +y_{0})\\
A^{\prime}_{3}=&\sum_{u v \in E(H)}((x_{0}+d_{H}(u) x_{1})\cdot( y_{0}+d_{H^{\prime}}(v) y_{1})+(x_{0}+d_{H}(v) x_{1})(y_{0}+d_{H^{\prime}}(u) y_{1}))\\
A^{\prime}_{4}=&\sum_{u\in V(H)} d_{H}(u) \cdot (d_{H}(u)x_{1} +x_{0}) \cdot(d_{H^{\prime}}(u)y_{1} +y_{0})
\end{aligned}
\end{equation*}
By simple scaling, we see that
\begin{align*}
A^{\prime}_1\geq&2e(H^{\prime})x_{0}y_{0} + ax_{2}y_{0}+a^{\prime}x_{0}y_{2}, \ \ A^{\prime}_3=2e(H)x_{0}y_{0} + ax_{1}y_{0}+ax_{0}y_{1}+c^{\prime}_{1}x_{1}y_{1},\\
A^{\prime}_2\geq &a^{\prime}x_{0}y_{2}+ax_{2}y_{0} +2e(H^{\prime})x_{0}y_{0},\ \ A^{\prime}_4= c^{\prime}_{2}x_{1}y_{1}+ax_{0}y_{1}+ax_{1}y_{0}+2e(H)x_{0}y_{0}.
\end{align*}
Suppose that (\ref{equ::6}) does not hold, then $a^{\prime} \geq b+1=a+1$.
From Lemma \ref{lem::3.1}, we find that $\left(a+\frac{1}{2}\right) y_{2}>a y_{1}$.
Additionally,  by (\ref{equ::4}) and (\ref{equ::14}) we have
\begin{equation*}\label{equ::22}
\begin{array}{ll}
&\frac{1}{2}x_{0} y_{2}+a y_{0}\left(x_{2}-x_{1}\right)-c^{\prime}_{1} x_{1} y_{1}\\
>&(1-\alpha)x_{0} y_{0}\left(\frac{1}{ 2(\rho^{\prime}_{\alpha}-\alpha(s-1))}+\frac{a}{\rho_{\alpha}-\alpha(s-1)}-\frac{a}{\rho_{\alpha}-(\alpha(t+s-1)+(1-\alpha)t)}-\frac{(1-\alpha)c^{\prime}_{1}}{(\rho_{\alpha}-(\alpha(t+s-1)+(1-\alpha)t))\left(\rho^{\prime}_{\alpha}-(\alpha(t+s-1)+(1-\alpha)t)\right)}\right)\\
>&0,
\end{array}
\end{equation*}
since $\rho_{\alpha}\geq\alpha(n-1)+(1-\alpha)(s-2)$, $\rho^{\prime}_{\alpha}\geq \rho_{\alpha}\left( K_{s-1} \vee \overline{K}_{n-s+1}\right)\geq \alpha(n-1)+(1-\alpha)(s-2)$, $0<\alpha<1$, and $n$ is sufficiently large. Similarly, we have $\frac{1}{2}x_{0} y_{2}+a y_{0}\left(x_{2}-x_{1}\right)-c^{\prime}_{2} x_{1} y_{1}>0,$
Furthermore, since $e\left(H^{\prime}\right)=e\left( H\right)$, we have
\begin{align*}
A^{\prime}_{1}-A^{\prime}_{3}\geq&(2e(H^{\prime})x_{0}y_{0} + ax_{2}y_{0}+a^{\prime}x_{0}y_{2})-(2e(H)x_{0}y_{0} +ax_{1}y_{0}+ax_{0}y_{1}+c^{\prime}_{1}x_{1}y_{1})\\
=& a^{\prime} x_{0} y_{2}+a x_{2} y_{0}-a x_{0} y_{1}-a x_{1} y_{0}-c^{\prime}_{1} x_{1} y_{1}\\
\geq&(a+1) x_{0} y_{2}+a x_{2} y_{0}-a x_{0} y_{1}-a x_{1} y_{0}-c^{\prime}_{1} x_{1} y_{1} \\
=&x_{0}\left(\left(a+\frac{1}{2}\right) y_{2}-a y_{1}\right)+\left(\frac{1}{2}x_{0} y_{2}+a\left(x_{2}-x_{1}\right) y_{0}-c^{\prime}_{1} x_{1} y_{1}\right)\\
>&0,
\end{align*}
and similarly, we obtain $A^{\prime}_{2}-A^{\prime}_{4}>0$.
By (\ref{equ::8})-(\ref{equ::8''}) and (\ref{equ::8'})-(\ref{equ::8'''}), we find that
\begin{align*}
\left(\rho_{\alpha}^{\prime}-\rho_{\alpha}\right) \mathbf{Y}^{T}\mathbf{X} =&\left(A_{\alpha}\left(G^{\prime}\right) \mathbf{Y}\right)^{T}\mathbf{X}-\mathbf{Y}^{T}\left(A_{\alpha}\left(G^{*}\right) \mathbf{X}\right)\\
=&\left( \sum_{u v \in E\left(H^{\prime}\right)}(1-\alpha)\left( x_{u} y_{v}+x_{v} y_{u}\right)+\sum_{u\in V\left(H^{\prime}\right)} \alpha d_{H^{\prime}}(u) x_{u} y_{u}\right) \\
&-( \sum_{u v \in E(H)}(1-\alpha)\left( x_{u}  y_{v}+x_{v}y_{u}\right)+\sum_{u\in V\left(H\right)} \alpha d_{H}(u) x_{u} y_{u})\\
=&(1-\alpha)\left( \sum_{u v \in E\left(H^{\prime}\right)}\left( x_{u} y_{v}+x_{v} y_{u}\right)-\sum_{u v \in E(H)}\left( x_{u}  y_{v}+x_{v}y_{u}\right)\right)\\
&+\alpha\left( \sum_{u\in V\left(H^{\prime}\right)}  d_{H^{\prime}}(u) x_{u} y_{u} - \sum_{u\in V\left(H\right)}d_{H}(u) x_{u} y_{u})\right) \\
\geq&(1-\alpha)^{3}\left( \frac{A^{\prime}_{1}}{(\rho_{\alpha}-\alpha(s-1))(\rho^{\prime}_{\alpha}-\alpha(s-1))}\right.\\
&\left.-\frac{A^{\prime}_{3}}{(\rho_{\alpha}-(\alpha(t+s-1)+(1-\alpha)t))(\rho^{\prime}_{\alpha}-(\alpha(t+s-1)+(1-\alpha)t))}\right) \\
&+\alpha(1-\alpha)^{2}\left( \frac{A^{\prime}_{2}}{(\rho_{\alpha}-\alpha(s-1))(\rho^{\prime}_{\alpha}-\alpha(s-1))}\right.\\
&\left.-\frac{A^{\prime}_{4}}{(\rho_{\alpha}-(\alpha(t+s-1)+(1-\alpha)t))(\rho^{\prime}_{\alpha}-(\alpha(t+s-1)+(1-\alpha)t))}\right) \\
>&0,
\end{align*}
which  contradicts  the maximality of $\rho_{\alpha}$.  Therefore,  (\ref{equ::6}) holds.
\end{proof}

The following lemma implies that $G^{*}$ has a property of local degree sequence majorization.
\begin{lem}\label{lem::3.4}
Let $H$ consist of some components of $G^{*}-K$ with $|H| \leq N$ (a constant), $H^{\prime}$ be a graph with $V\left(H^{\prime}\right)=V(H)$, $e\left(H^{\prime}\right)=e\left( H\right)$ and $H^{\prime}$ have the $(s, t)$-property. If $\pi(H) \prec \pi\left(H^{\prime}\right)$, then $\pi(H)=\pi\left(H^{\prime}\right)$.
\end{lem}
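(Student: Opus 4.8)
The plan is to prove the single equality $\|\pi(H)\|_{2}=\|\pi(H')\|_{2}$ and then invoke the equality case of Lemma \ref{lem::2.8}: since $\pi(H)\prec\pi(H')$ forces $\pi(H)\prec_{w}\pi(H')$, that equality case gives $\pi(H)=\pi(H')$ directly. So the entire task reduces to establishing $\|\pi(H')\|_{2}\le\|\pi(H)\|_{2}$, the reverse of the inequality that $\pi(H)\prec\pi(H')$ already supplies via Lemma \ref{lem::2.8}.

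The first step, which is the crux, is to normalize the labeling of $H'$. The hypotheses on $H'$ — that it is a graph on $V(H)$ with $e(H')=e(H)$, with the $(s,t)$-property, and with $\pi(H)\prec\pi(H')$ — are all preserved under permuting the vertex labels of $H'$ (the $(s,t)$-property is isomorphism-invariant and the multiset of degrees, hence $\pi(H')$, is unchanged). The graph $H$ is a fixed subgraph of $G^{*}-K$ and cannot be relabelled, but we are free to replace $H'$ by an isomorphic copy on $V(H)$ for which $d_{H}$ and $d_{H'}$ are similarly ordered: order $V(H)=\{v_{1},\dots,v_{m}\}$ with $d_{H}(v_{1})\ge\cdots\ge d_{H}(v_{m})$, and take the copy of $H'$ in which $d_{H'}(v_{i})=\pi(H')_{i}$ for all $i$ (possible since $\pi(H')$ is the degree sequence of $H'$). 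For this labeling $d_{H}(v_{i})=\pi(H)_{i}$ and $d_{H'}(v_{i})=\pi(H')_{i}$, so
$$\sum_{uv\in E(H')}\bigl(d_{H}(u)+d_{H}(v)\bigr)=\sum_{v\in V(H)}d_{H}(v)\,d_{H'}(v)=\pi(H)^{T}\pi(H').$$

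Now I would chain three facts for this choice of $H'$. Lemma \ref{lem::3.3}, inequality (\ref{equ::5}), gives $\pi(H)^{T}\pi(H')\le\sum_{uv\in E(H)}(d_{H}(u)+d_{H}(v))=\|\pi(H)\|_{2}^{2}$. Lemma \ref{lem::2.9} applied with $\mathbf{x}=\pi(H)$, $\mathbf{y}=\pi(H')$, $\mathbf{z}=\pi(H)$ (all non-increasing) and $\pi(H)\prec\pi(H')$ gives $\|\pi(H)\|_{2}^{2}=\pi(H)^{T}\pi(H)\le\pi(H')^{T}\pi(H)=\pi(H)^{T}\pi(H')$. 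Hence equality holds in (\ref{equ::5}), so the ``moreover'' part of Lemma \ref{lem::3.3}, inequality (\ref{equ::6}), yields $\|\pi(H')\|_{2}^{2}=\sum_{uv\in E(H')}(d_{H'}(u)+d_{H'}(v))\le\sum_{uv\in E(H)}(d_{H}(u)+d_{H}(v))=\|\pi(H)\|_{2}^{2}$. On the other hand $\pi(H)\prec\pi(H')$ implies $\pi(H)\prec_{w}\pi(H')$, so Lemma \ref{lem::2.8} gives $\|\pi(H)\|_{2}\le\|\pi(H')\|_{2}$ with equality only if $\pi(H)=\pi(H')$. Combining the two inequalities forces $\|\pi(H)\|_{2}=\|\pi(H')\|_{2}$, hence $\pi(H)=\pi(H')$.

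I expect the only real obstacle to be the relabeling observation in the second paragraph: one must recognize that $\pi(H)\prec\pi(H')$ is a statement purely about sorted vectors, while the conclusions of Lemma \ref{lem::3.3} involve the vertex-indexed sums $\sum_{v}d_{H}(v)d_{H'}(v)$, and that the way to marry the two is to pick the realization of $H'$ whose degrees are aligned with those of the fixed graph $H$, so that $\pi(H)^{T}\pi(H')$ equals that vertex-indexed sum. Without this alignment, (\ref{equ::5}) and Lemma \ref{lem::2.9} only produce two unrelated upper bounds for $\sum_{v}d_{H}(v)d_{H'}(v)$ and cannot be combined to certify equality in (\ref{equ::5}). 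Once the alignment is set up, the remaining manipulations are routine applications of the majorization lemmas already available in Sections 2 and 3.
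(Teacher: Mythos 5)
Your proposal is correct and takes essentially the same route as the paper's proof: pair Lemma \ref{lem::2.9} with inequality (\ref{equ::5}) to force equality in (\ref{equ::5}), then use (\ref{equ::6}) and the equality case of Lemma \ref{lem::2.8} with $k=2$ to conclude $\pi(H)=\pi(H')$. Your explicit step of replacing $H'$ by an isomorphic copy on $V(H)$ whose degrees are aligned with those of $H$ makes rigorous the identification $\sum_i d_i d_i'=\sum_{v\in V(H)}d_H(v)d_{H'}(v)$ that the paper simply asserts, and it is legitimate since the hypotheses and the conclusion are invariant under such a relabeling.
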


\begin{proof}
Suppose to the contrary that $\pi(H) \neq \pi\left(H^{\prime}\right)$. Since $V(H)=V\left(H^{\prime}\right)$, we have $\pi(H)=\left(d_{1}, d_{2}, \ldots, d_{|H|}\right)$ and $\pi\left(H^{\prime}\right)=\left(d_{1}^{\prime}, d_{2}^{\prime}, \ldots, d_{|H|}^{\prime}\right)$. Let $k=2$ in Lemma \ref{lem::2.8}, we have
\begin{equation}\label{equ::25}
\sum_{i=1}^{|H|} d_{i}^{2}<\sum_{i=1}^{|H|} d_{i}^{\prime 2}.
\end{equation}
Furthermore, let $\mathbf{x}=\mathbf{z}=\pi(H)$ and $\mathbf{y}=\pi\left(H^{\prime}\right)$ in Lemma \ref{lem::2.9}, we obtain
\begin{equation}\label{equ::26}
\sum_{i=1}^{|H|} d_{i}^{2} \leq \sum_{i=1}^{|H|} d_{i} d_{i}^{\prime}.
\end{equation}
Notice that $\sum\limits_{i=1}^{|H|} d_{i}^{2}=\sum\limits_{u v \in E(H)}\left(d_{H}(u)+d_{H}(v)\right)$ and
$$\sum_{i=1}^{|H|} d_{i} d_{i}^{\prime}=\sum_{v \in V\left(H^{\prime}\right)} d_{H}(v) d_{H^{\prime}}(v)=\sum_{u v \in E\left(H^{\prime}\right)}\left(d_{H}(u)+d_{H}(v)\right).$$
Then by (\ref{equ::5})  we have $\sum\limits_{i=1}^{|H|} d_{i} d_{i}^{\prime} \leq \sum\limits_{i=1}^{|H|} d_{i}^{2}$. Together with (\ref{equ::26}), we obtain $\sum\limits_{i=1}^{|H|} d_{i} d_{i}^{\prime}=\sum\limits_{i=1}^{|H|} d_{i}^{2}$. Therefore, (\ref{equ::6})  holds, that is, $\sum\limits_{i=1}^{|H|} d_{i}^{\prime 2}\leq \sum\limits_{i=1}^{|H|} d_{i}^{2}$, which contradicts (\ref{equ::25}).
\end{proof}

\section{The characterization of the $A_\alpha$-spectral extremal graph}
Recall that $G^*$ is a graph with maximum $A_\alpha$-spectral radius among all $K_{s,t}$-minor free graphs of sufficiently large order $n$, where $0 < \alpha<1$ and $2\leq s\leq t$. Moreover, $\rho_{\alpha} = \rho_{\alpha}(G^{*})$, $\mathbf{X}=(x_{v})_{v \in V(G^{*})} \in R^{n}$ is a Perron eigenvector of $A_{\alpha}(G^{*})$ corresponding to $\rho_{\alpha}$, $x_{0}=\sum\limits_{v\in K}x_{v}$. In addition,
$G^*$ contains a clique dominating set $K$ of size $s-1$, $G^{*}-K$ has the $(s, t)$-property and $\Delta(G^{*}-K)<t$.

\begin{lem}\label{lem::3.1'}
If $s=2$, $t=2$, $n-1=pt+r$ and $1\leq r \leq2$. Then $G^{*}\cong K_{1}\vee(pK_{2}\cup K_{r}).$
\end{lem}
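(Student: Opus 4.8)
The plan is to combine the structural facts already available for $G^{*}$ with the local edge maximality of Lemma~\ref{lem::3.2}. Since $s=t=2$, Lemma~\ref{lem::2.7} provides a clique dominating set $K=\{v_{0}\}$ of size $s-1=1$, so $G^{*}=K_{1}\vee(G^{*}-K)$. By Lemma~\ref{lem::2.10}, and as already recorded just before this lemma, $G^{*}-K$ has the $(2,2)$-property and $\Delta(G^{*}-K)<t=2$; hence $\Delta(G^{*}-K)\le 1$, so every component of $G^{*}-K$ is a $K_{1}$ or a $K_{2}$, and I would write $G^{*}-K\cong aK_{2}\cup bK_{1}$ with $a,b\ge 0$ and $2a+b=n-1$.

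The crux is then to show $b\le 1$. I would argue by contradiction: if $b\ge 2$, pick two isolated vertices $u_{1},u_{2}$ of $G^{*}-K$; these are two components, so $H:=(\{u_{1},u_{2}\},\emptyset)$ is a union of components of $G^{*}-K$ with $|H|=2\le N$. Let $H'\cong K_{2}$ be the graph on the same two vertices with the edge $u_{1}u_{2}$. The key verification is that $H'$ has the $(2,2)$-property; this is immediate since its only component has $2\le t$ vertices, so $H'$ is $K_{1,2}$-minor free. Then Lemma~\ref{lem::3.2} yields $e(H')\le e(H)$, i.e. $1\le 0$, a contradiction, so $b\le 1$.

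To finish, I would read off $G^{*}$ from $2a+b=n-1$ with $b\in\{0,1\}$, matched against $n-1=pt+r=2p+r$ with $1\le r\le 2$: the parity of $n-1$ forces $b=0$ (hence $r=2$ and $a=p+1$) when $n-1$ is even, and $b=1$ (hence $r=1$ and $a=p$) when $n-1$ is odd. In both cases $G^{*}\cong K_{1}\vee(pK_{2}\cup K_{r})$, since $K_{r}\cong K_{2}$ when $r=2$ and $K_{r}\cong K_{1}$ when $r=1$.

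I do not expect a genuine obstacle here. The only points requiring care are: (i) checking that the single edge $K_{2}$ retains the $(s,t)$-property for $t=2$, which is exactly what makes Lemma~\ref{lem::3.2} applicable; (ii) ensuring the subgraph $H$ fed into Lemma~\ref{lem::3.2} is genuinely a union of components of bounded size, which holds because we only ever compare on two fixed vertices; and (iii) the small parity bookkeeping that pins down the exponents $a$ and $r$. The heavier machinery of Lemmas~\ref{lem::3.3} and~\ref{lem::3.4} on degree-sequence majorization is not needed in this degenerate case.
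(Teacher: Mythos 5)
Your proposal is correct, and its first half coincides with the paper's argument: for $s=t=2$ the dominating vertex $v$ from Lemma \ref{lem::2.7} together with $K_{2,2}$-minor freeness forces $d_{G^{*}-v}(u)\le 1$ for every $u$, so $G^{*}-v$ is a disjoint union of edges and isolated vertices. Where you diverge is the finish. The paper simply observes that any such graph is a subgraph of $K_{1}\vee(pK_{2}\cup K_{r})$, which is itself $K_{2,2}$-minor free, and concludes $G^{*}\cong K_{1}\vee(pK_{2}\cup K_{r})$ directly from the maximality of $\rho_{\alpha}$ (via Lemma \ref{lem::2.2}). You instead exclude two isolated vertices by feeding the two singleton components and $H^{\prime}\cong K_{2}$ into the local edge maximality Lemma \ref{lem::3.2} --- your check that $K_{2}$ has the $(2,2)$-property is right, since it has fewer than $t+1$ vertices so it cannot contain a $K_{1,2}$-minor --- and then finish with the parity bookkeeping $2a+b=n-1=2p+r$, $b\in\{0,1\}$. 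Both routes are valid: the paper's is the more elementary one-step argument, while yours leans on the Section~3 machinery (Lemma \ref{lem::3.2}, hence Lemma \ref{lem::3.1} and the large-$n$ eigenvector estimates), which is legitimately available because those lemmas are stated for all $2\le s\le t$ without the $t\ge 4$ restriction, but is heavier than needed in this degenerate case.
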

\begin{proof}
Note that $|K|=s-1=1$. Let $v$ be the unique vertex in $K$. Thus $d_{G^{*}}(v)=n-1$. Since $G^{*}$ is a $K_{2,2}$-minor free graph, then $G^{*}$ contains no $K_{2,2}$ as a subgraph, which implies that $d_{G^{*}-v}(u)\leq1$ for any vertex $u \in V(G^{*}-v)$. Thus $G^{*}$ is a subgraph of $K_{1}\vee(pK_{2}\cup K_{r})$. By the maximality of  $\rho_{\alpha}$, we get that $G^{*}\cong K_{1}\vee(pK_{2}\cup K_{r})$.
\end{proof}

\begin{lem}\label{lem::3.1''}
If $s=2$, $t=3$, $n-1=pt+r$ and $1\leq r \leq3$. Then $G^{*}\cong K_{1}\vee(pK_{3}\cup K_{r}).$
\end{lem}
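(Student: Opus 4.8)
The plan is to first fix the structure of $H:=G^{*}-K$ and then read off $\rho_{\alpha}(G^{*})$ from a Schur complement of $A_{\alpha}(G^{*})$ at the dominating vertex. Since $s=2$, the clique dominating set is a single vertex $v$ with $d_{G^{*}}(v)=n-1$, and by Lemma~\ref{lem::2.10} the graph $H$ has the $(2,3)$-property; in particular $\Delta(H)\le2$ and $H$ is $K_{2,2}$-minor free, so each component of $H$ is a path or a triangle. Conversely, any vertex-disjoint union $\widetilde H$ of paths and triangles on $n-1$ vertices has the $(2,3)$-property, so $K_{1}\vee\widetilde H$ is $K_{2,3}$-minor free by Lemma~\ref{lem::2.10}. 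Thus $G^{*}=K_{1}\vee H$ maximizes $\rho_{\alpha}$ over all such $K_{1}\vee\widetilde H$, and it remains to show that $\widetilde H\cong pK_{3}\cup K_{r}$ is the unique maximizer.

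For a graph $C$ put $\phi_{C}(\rho)=\mathbf{1}^{T}\big((\rho-\alpha)I-A_{\alpha}(C)\big)^{-1}\mathbf{1}$ (with $A_{\alpha}(C)$ built from the degrees inside $C$) and $\Phi_{\widetilde H}(\rho)=\sum_{C}\phi_{C}(\rho)$, summed over the components of $\widetilde H$. For $\rho>\alpha+2$ every $(\rho-\alpha)I-A_{\alpha}(C)$ is positive definite ($\rho_{\alpha}(C)\le\Delta(C)\le2$), and a standard block computation at $v$ gives
\begin{equation*}
\det\!\big(\rho I-A_{\alpha}(K_{1}\vee\widetilde H)\big)=\Big(\prod_{C}\det\big((\rho-\alpha)I-A_{\alpha}(C)\big)\Big)\Big(\rho-\alpha(n-1)-(1-\alpha)^{2}\,\Phi_{\widetilde H}(\rho)\Big).
\end{equation*}
Because $\rho_{\alpha}(K_{1}\vee\widetilde H)\ge\alpha(n-1)>\alpha+2$ for large $n$ (Lemma~\ref{lem::2.1'}) while all roots of the first factor are at most $\alpha+2$, $\rho_{\alpha}(K_{1}\vee\widetilde H)$ is the largest root of $g_{\widetilde H}(\rho):=\rho-\alpha(n-1)-(1-\alpha)^{2}\Phi_{\widetilde H}(\rho)$; since each $\phi_{C}$ is positive and strictly decreasing on $(\alpha+2,\infty)$, $g_{\widetilde H}$ is strictly increasing there. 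Hence if $\Phi_{\widetilde H'}(\rho_{0})>\Phi_{\widetilde H}(\rho_{0})$ at $\rho_{0}=\rho_{\alpha}(K_{1}\vee\widetilde H)$ for an admissible $\widetilde H'$, then $\rho_{\alpha}(K_{1}\vee\widetilde H')>\rho_{\alpha}(K_{1}\vee\widetilde H)$. So it suffices to show that $\Phi_{\widetilde H}(\rho)$, over all ways of partitioning $n-1$ vertices into paths and triangles, is maximized uniquely by $\widetilde H=pK_{3}\cup K_{r}$ for every $\rho>\alpha+2$; applying the above with $\widetilde H'=pK_{3}\cup K_{r}$ then forces $H\cong pK_{3}\cup K_{r}$.

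For this last optimization I would use a handful of component inequalities valid for all $\rho>\alpha+2$ and all $\alpha\in(0,1)$: $\phi_{K_{3}}(\rho)+\phi_{P_{k-3}}(\rho)>\phi_{P_{k}}(\rho)$ for $k\ge3$ (with $\phi_{P_{0}}:=0$), $\phi_{P_{2}}(\rho)>2\phi_{P_{1}}(\rho)$, $\phi_{K_{3}}(\rho)>\phi_{P_{1}}(\rho)+\phi_{P_{2}}(\rho)$ and $\phi_{K_{3}}(\rho)+\phi_{P_{1}}(\rho)>2\phi_{P_{2}}(\rho)$. The first lets one pull a triangle out of any path component with at least three vertices; the others then coalesce the remaining copies of $P_{1}$ and $P_{2}$ (occasionally creating a new triangle) until at most one path component on fewer than three vertices is left, i.e.\ exactly $pK_{3}\cup K_{r}$ (using $K_{1}=P_{1}$, $K_{2}=P_{2}$ and that the number of path-vertices is $\equiv n-1\pmod3$). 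Each move strictly increases $\Phi$, so $pK_{3}\cup K_{r}$ is the unique maximizer. These inequalities become positivity statements once $\phi_{P_{k}}(\rho)$ is written in closed form by solving $\big((\rho-\alpha)I-A_{\alpha}(P_{k})\big)\mathbf y=\mathbf 1$ through the path's transfer matrix (characteristic root $\lambda$ with $\lambda+\lambda^{-1}=(\rho-3\alpha)/(1-\alpha)$, plus a boundary term from the two endpoint equations); for instance $\phi_{K_{3}}(\rho)+\phi_{P_{k-3}}(\rho)-\phi_{P_{k}}(\rho)$ reduces to a positive multiple of $\lambda^{k-3}(\lambda^{3}-1)$, positive uniformly in $k$.

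The step needing care is exactly this uniformity: a component of $H$ may be a path on $\Theta(n)$ vertices, so the bounded-size Lemmas~\ref{lem::3.2}--\ref{lem::3.4} do not reach it, and one needs the inequalities above to hold at $\rho=\rho_{\alpha}(G^{*})=\alpha n+O(1)$ with no dependence on the path length. The transfer-matrix closed form supplies precisely that; with it, the remaining work is purely algebraic, plus the bookkeeping that $pK_{3}\cup K_{r}$ is the non-apex part of $F_{2,3}(n)$.
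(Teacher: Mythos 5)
Your route is genuinely different from the paper's, and as a strategy it is sound. The paper also reduces to ``every component of $G^{*}-v$ is a triangle or a path'', but then finishes locally: it first shows there is at most one path component (join two pendant vertices of distinct path components and apply Lemma~\ref{lem::2.2}), and then eliminates a path $P_d$ with $d\ge 3$ by a local modification --- adding $v_1v_3$ when $d=3$, and for $d\ge 4$ an edge rotation or a $2$-switch in the middle of the path combined with the symmetry of the Perron vector, which gives a contradiction independently of the path length. You instead compare whole candidates $K_1\vee\widetilde H$ through the Schur complement at the apex; the factorization, the identification of $\rho_{\alpha}$ with the largest root of $g_{\widetilde H}$, the strict monotonicity of $g_{\widetilde H}$ on $(\alpha+2,\infty)$, and the comparison principle are all correct, and your finite inequalities are immediate in closed form, since $\phi_{P_1}=1/(\rho-\alpha)$, $\phi_{P_2}=2/(\rho-\alpha-1)$, $\phi_{K_3}=3/(\rho-\alpha-2)$ (all independent of $\alpha$ once written in $x=\rho-\alpha>2$). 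What your approach buys is a single quantitative comparison with no case analysis on $d$; what it costs is precisely the uniformity in $k$ that you flag.

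On that step, note that $\phi_{K_3}+\phi_{P_{k-3}}>\phi_{P_k}$ is decided by exponentially small terms: the per-vertex bulk value of a long path is exactly $1/(\rho-\alpha-2)=\phi_{K_3}/3$, so the linear-in-$k$ parts and the two endpoint corrections cancel identically between $\phi_{P_k}$ and $\phi_{K_3}+\phi_{P_{k-3}}$, and positivity rests entirely on an $O(\lambda^{-k})$ remainder. For $\alpha=0$ the computation with phantom boundary condition $y_0=y_{k+1}=0$ does give a difference that is a positive multiple of $\lambda^{k-2}(\lambda^{3}-1)$, so your claimed form is essentially right there; but for $\alpha\in(0,1)$ the endpoint eigen-equation forces $y_0=-\frac{\alpha}{1-\alpha}y_1$ rather than $y_0=0$, so the boundary terms change and the asserted ``positive multiple of $\lambda^{k-3}(\lambda^{3}-1)$'' still has to be derived rather than quoted; numerical checks (e.g.\ $\alpha=\tfrac12$) support it, but as written this is the one load-bearing claim of your proof that is asserted, not proved. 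If you want to bypass that algebra entirely, the paper's device of a $2$-switch in the middle of the long path, exploiting the symmetry $x_{v_k}=x_{v_{d+1-k}}$ of the Perron vector, is an inexpensive substitute that needs no uniform estimate.
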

\begin{proof}
Note that $|K|=s-1=1$. Let $v$ be the unique vertex in $K$. Thus $d_{G^{*}}(v)=n-1$. Since $G^{*}$ is $K_{2,3}$-minor free, we see that $G^{*}-v$ does not contain any cycle of length at least $4$ as a subgraph
and    $d_{G^{*}-v}(u)\leq2$ for any vertex $u \in V(G^{*}-v)$. Therefore, each component of $G^{*}-v$ is either a triangle or a path of order at least $1$. In fact, there is at most one component of $G^{*}-v$ is a path. Otherwise adding an edge to two pendant vertices in two different components which are paths leads to a $K_{2,3}$-minor free graph with larger $A_\alpha$-spectral radius, a contradiction. Let $P$ be the unique component of $G^{*}-v$ which is not triangle, that is,
$P$ is a path of order $d \geq 1$.  Now, we assert that $1 \leq d \leq 2$.
Let $v_{i}\in V(P)$ such that $P=v_1\ldots v_d$ for $1\leq i \leq d$. We next consider the following three cases.

{\flushleft\bf Case 1. $d=3$.} Let $G^{\prime}=G^{*}+v_1v_d$. Clearly, $G^{\prime}$ is a $K_{2,3}$-minor free graph and $\rho_{\alpha}\left(G^{\prime}\right)>\rho_{\alpha}$ by Lemma \ref{lem::2.2}, a contradiction.

{\flushleft\bf Case 2. $d=4$.} We have $x_{v_1}=x_{v_4}$ and $x_{v_2}=x_{v_3}$ by symmetry. Let $G^{\prime}=G^{*}-v_1 v_2+v_2 v_4$. Clearly, $G^{\prime}$ is $K_{2,3}$-minor free and
$$
\rho_{\alpha}\left(G^{\prime}\right)-\rho_{\alpha} \geq\left(\alpha x^{2}_{v_2}+2(1-\alpha)x_{v_2} x_{v_4}+\alpha x^{2}_{v_4}\right)-\left(\alpha x^{2}_{v_1}+2(1-\alpha)x_{v_1} x_{v_2}+\alpha x^{2}_{v_2}\right)=0.
$$
If $\rho_{\alpha}\left(G^{\prime}\right)=\rho_{\alpha}$, then $\mathbf{X}$ is also an unit eigenvector corresponding to $\rho_{\alpha}\left(G^{\prime}\right)$. Since $v_2$, $v_3$, and $v_4$ are symmetric in $G^{\prime}$, we have $x_{v_2}=x_{v_3}=x_{v_4}$, which implies that $x_{v_1}=x_{v_2}=x_{v_3}=x_{v_4}$. By eigenequations of $A_{\alpha}(G^{*})$ on $v_1$ and $v_2$, we have
$$
\rho_{\alpha} x_{v_1}=2\alpha x_{v_1}+(1-\alpha)(x_{v_2}+x_v) \quad \text { and } \quad \rho_{\alpha} x_{v_2}=3\alpha x_{v_2}+(1-\alpha)(x_{v_1}+x_{v_3}+x_v),
$$
a contradiction. Thus, $\rho_{\alpha}\left(G^{\prime}\right)>\rho_{\alpha}$, which is also a contradiction.

{\flushleft\bf Case 3. $d\geq5$.} We firstly assume that $d$ is odd, let $d=2j+1$ with $j \geq 2$. Then we have $x_{v_k}=x_{v_{2j+2-k}}$ for $1 \leq k \leq j$ by symmetry. Let $G^{\prime}=G^{*}-\left\{v_{j-1} v_j, v_{j+2} v_{j+3}\right\}+\left\{v_j v_{j+2}, v_{j-1} v_{j+3}\right\}$. Clearly, $G^{\prime}$ is a $K_{2,3}$-minor free graph and
$$
\begin{aligned}
\rho_{\alpha}\left(G^{\prime}\right)-\rho_{\alpha}\geq& \sum_{u w \in E(G^{\prime})}(\alpha x_{u}^{2}+2(1-\alpha)x_{u}x_{w}+\alpha x_{w}^{2})-\sum_{u w \in E(G^{*})} (\alpha x_{u}^{2}+2(1-\alpha)x_{u}x_{w}+\alpha x_{w}^{2}) \\
=&2(1-\alpha) (x_{v_j} x_{v_{j+2}}+x_{v_{j-1}} x_{v_{j+3}}-x_{v_{j-1}} x_{v_{j}}-x_{v_{j+2}} x_{v_{j+3}}) \\
=& 2(1-\alpha)( x_{v_{j}}^2+x_{v_{j-1}}^2-2x_{v_{j-1}} x_{v_{j}})  \\
=& 2(1-\alpha)(x_{v_{j-1}}-x_{v_{j}})^2 \\
\geq & 0.
\end{aligned}
$$
Now, suppose that $d$ is even, let $d=2 j$ with $j \geq 3$. Then we have $x_{v_k}=x_{v_{2j+1-k}}$ for $1 \leq k \leq j$ by symmetry. Let $G^{\prime}=G^{*}-\left\{v_{j-1} v_j, v_{j+2} v_{j+3}\right\}+\left\{v_j v_{j+2}, v_{j-1} v_{j+3}\right\}$. Clearly, $G^{\prime}$ is $K_{2,3}$-minor free and
$$
\begin{aligned}
\rho_{\alpha}\left(G^{\prime}\right)-\rho_{\alpha}
\geq& \sum_{u w \in E(G^{\prime})}(\alpha x_{u}^{2}+2(1-\alpha)x_{u}x_{w}+\alpha x_{w}^{2})-\sum_{u w \in E(G^{*})} (\alpha x_{u}^{2}+2(1-\alpha)x_{u}x_{w}+\alpha x_{w}^{2}) \\
=&2(1-\alpha) (x_{v_{j}} x_{v_{j+2}}+ x_{v_{j-1}} x_{v_{j+3}}-x_{v_{j-1}} x_{v_{j}}-x_{v_{j+2}} x_{v_{j+3}})\\
=&2(1-\alpha) ( x_{v_{j}}(x_{v_{j+2}}-x_{v_{j-1}})- x_{v_{j+3}}(x_{v_{j+2}}-x_{v_{j-1}})) \\
=&2(1-\alpha) (x_{v_{j}}-x_{v_{j+3}})(x_{v_{j+2}}-x_{v_{j-1}})\\
=&0.
\end{aligned}
$$
Thus, whether $d$ is odd or even, if $\rho_{\alpha}\left(G^{\prime}\right)=\rho_{\alpha}$, then $\mathbf{X}$ is also an unit eigenvector corresponding to $\rho_{\alpha}\left(G^{\prime}\right)$. Since $v_{v_{j}}$, $ v_{v_{j+1}}$, and $v_{v_{j+2}}$ are symmetric in $G^{\prime}$, we have $x_{v_{j-1}}=x_{v_{j}}=x_{v_{j+1}}$. By the eigenequations of $A_{\alpha}(G^{*})$, we have $x_{v_{1}}=\cdots=x_{v_{d}}$. From the eigenequations of $A_{\alpha}(G^{*})$ on $v_1$ and $v_2$, we have
$$
\rho_{\alpha} x_{v_1}=2\alpha x_{v_1}+(1-\alpha)(x_{v_2}+x_v) \quad \text { and } \rho_{\alpha} x_{v_2}=3 \alpha x_{v_2}+(1-\alpha)(x_{v_1}+x_{v_3}+x_v),
$$
a contradiction. So, $\rho_{\alpha}\left(G^{\prime}\right)>\rho_{\alpha}$, which is also a contradiction.

Therefore, we get that $1 \leq d \leq 2$, and so $G^{*}-v$ consists of disjoint copies of triangles and at most a path of order $1$ or $2$, that is, $G^{*}\cong K_{1}\vee(pK_{2}\cup K_{r}).$
\end{proof}

\begin{lem}\label{lem::3.1'''}
If $s=3$, $t=3$, $n-1=pt+r$ and $1\leq r \leq3$. Then $G^{*}\cong K_{2}\vee(pK_{3}\cup K_{r}).$
\end{lem}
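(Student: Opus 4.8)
The plan is to mimic the proof of Lemma~\ref{lem::3.1''} almost verbatim, with the single dominating vertex replaced by the $2$-element clique dominating set $K$ supplied by Lemma~\ref{lem::2.7}. Since $K$ is a clique dominating set, each of its two vertices is adjacent to the other and to every vertex of $H:=G^{*}-K$; in particular $G^{*}$ is connected, and every vertex of $H$ contributes the same term $(1-\alpha)x_{0}$ to its eigenequation, where $x_{0}=\sum_{v\in K}x_{v}$. By Lemma~\ref{lem::2.10}, $H$ has the $(s,t)$-property with $s=t=3$; since $\min\{s,\lfloor(t+1)/2\rfloor\}=\min\{3,2\}=2$, this means precisely that $H$ is $K_{1,3}$-minor free and $K_{2,2}$-minor free. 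As $\Delta(H)=\Delta(G^{*}-K)<t=3$, the graph $H$ is a disjoint union of paths and cycles, and $K_{2,2}$-minor freeness forbids cycles of length at least $4$; hence every component of $H$ is a path or a triangle, exactly as in Lemma~\ref{lem::3.1''}.

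Next I would show that at most one component of $H$ is a path: if $P_{1}$ and $P_{2}$ were two distinct path components, then adding an edge $e$ joining an endpoint of $P_{1}$ to an endpoint of $P_{2}$ merges them into a single path, so $H+e$ still has the $(3,3)$-property; by Lemma~\ref{lem::2.10} the graph $G^{*}+e$ is then $K_{3,3}$-minor free, while $\rho_{\alpha}(G^{*}+e)>\rho_{\alpha}$ by Lemma~\ref{lem::2.2}, contradicting the maximality of $G^{*}$. Let $P=u_{1}u_{2}\cdots u_{d}$ be the unique path component (if one exists; $d=0$ means $H$ consists only of triangles). It remains to prove $1\le d\le 2$, after which $H\cong pK_{3}\cup K_{r}$, and hence $G^{*}\cong K_{2}\vee(pK_{3}\cup K_{r})$.

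To exclude $d\ge 3$ I would use the same three switchings as in Lemma~\ref{lem::3.1''}. If $d=3$, set $G'=G^{*}+u_{1}u_{3}$: this turns $P$ into a triangle, so $G'$ is $K_{3,3}$-minor free and $\rho_{\alpha}(G')>\rho_{\alpha}$ by Lemma~\ref{lem::2.2}, a contradiction. If $d=4$, set $G'=G^{*}-u_{1}u_{2}+u_{2}u_{4}$; and if $d\ge 5$, treat the parities $d=2j+1$ $(j\ge 2)$ and $d=2j$ $(j\ge 3)$ using the double switch $G'=G^{*}-\{u_{j-1}u_{j},u_{j+2}u_{j+3}\}+\{u_{j}u_{j+2},u_{j-1}u_{j+3}\}$, which replaces $P$ by a triangle together with a strictly shorter path, hence keeps the $(3,3)$-property. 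Using the symmetry $x_{u_{k}}=x_{u_{d+1-k}}$ of the Perron vector along $P$, a short computation gives $\rho_{\alpha}(G')-\rho_{\alpha}\ge\mathbf{X}^{T}(A_{\alpha}(G')-A_{\alpha}(G^{*}))\mathbf{X}$, whose value is either $2(1-\alpha)(x_{u_{j-1}}-x_{u_{j}})^{2}\ge 0$ or $0$. If equality held, $\mathbf{X}$ would also be a Perron vector of $G'$, and exploiting the symmetry of the newly created triangle (all of whose vertices are joined to $K$) together with the eigenequations of $A_{\alpha}(G^{*})$ along $P$ forces every vertex of $P$ to carry the same Perron weight $c>0$; then comparing the eigenequations of $A_{\alpha}(G^{*})$ at the endpoint $u_{1}$ (of degree $s=3$) and at the internal vertex $u_{2}$ (of degree $s+1=4$), whose $K$-contributions both equal $(1-\alpha)x_{0}$ and therefore cancel, yields $\alpha c=-(1-\alpha)c$, which is absurd. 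Hence $\rho_{\alpha}(G')>\rho_{\alpha}$ in every case, contradicting maximality, so $1\le d\le 2$.

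The only genuinely technical point is this equality analysis in the switching steps, and it is the same as in Lemma~\ref{lem::3.1''}; the two dominating vertices create no new difficulty, since they contribute the same constant $(1-\alpha)x_{0}$ to the eigenequation at every vertex of $P$, so all cancellations carry over. I therefore expect no real obstacle here: this lemma is essentially the $s=3$ twin of Lemma~\ref{lem::3.1''}, and the cases $s=t\le 3$ must be handled separately only because the structural results invoked in Section~4 (for instance Lemmas~\ref{lem::2.11}--\ref{lem::2.15}) require $t\ge 4$.
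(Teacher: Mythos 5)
Your proposal is correct and follows essentially the same route as the paper: the paper likewise reduces $G^{*}-K$ (with the size-two clique dominating set from Lemma \ref{lem::2.7}) to a disjoint union of triangles and at most one path, and then disposes of paths of order at least $3$ by exactly the switchings of Cases 1--3 in the proof of Lemma \ref{lem::3.1''}, which you have reproduced with the only needed change being the constant contribution $(1-\alpha)x_{0}$ from the two dominating vertices. Your routing of the structural step through Lemma \ref{lem::2.10} and the $(3,3)$-property is just a repackaging of the paper's direct $K_{3,3}$-minor argument, so there is no substantive difference.
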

\begin{proof}
Note that $|K|=s-1=2$. Let $K=\{v, v^{\prime}\}$. Thus, $d_{G^{*}}(v)=d_{G^{*}}(v^{\prime})=n-1$. Since $G^{*}$ is $K_{3,3}$-minor free, we see that $G^{*}-\{v, v^{\prime}\}$ does not contain any cycle of length at least $4$ as a subgraph. Moreover, $G^{*}$ contains no $K_{3,3}$ as a subgraph, which implies that $d_{G^{*}-\{v, v^{\prime}\}}(u)\leq2$ for any vertex $u \in V(G^{*}-\{v, v^{\prime}\})$.
Therefore, each component of $G^{*}-\{v, v^{\prime}\}$ is either a triangle or a path of order at least $1$. In fact, there is at most one component of $G^{*}-\{v, v^{\prime}\}$ is a path. Otherwise, adding an edge to two pendant vertices in two different components which are paths leads to a $K_{3,3}$-minor free graph with larger $A_\alpha$-spectral radius, a contradiction. Let $P$ be the unique component of $G^{*}-\{v, v^{\prime}\}$ which is not triangle, that is, $P$ is a path of order $d \geq 1$.
Similar as Cases 1-3 in the proof of Lemma \ref{lem::3.1''}, we can get that $1 \leq d \leq 2$.
Thus $G^{*}-\{v, v^{\prime}\}$ consists of disjoint copies of triangles and at most a path of order $1$ or $2$, and so $G^{*}\cong K_{2}\vee(pK_{3}\cup K_{r}).$
\end{proof}

Lemmas \ref{lem::3.1'}-\ref{lem::3.1'''} imply that the cases $t=2$ and $t=3$ in Theorem \ref{thm::1.1} hold. Thus, it remains to prove the Theorem \ref{thm::1.1} for $t\geq4$. We now let $t\geq4$, $\beta=\left\lfloor\frac{t+1}{s+1}\right\rfloor$, $\gamma = \min \left\{s,\left\lfloor\frac{t+1}{2}\right\rfloor\right\}$. In addition,
we use $H_{i}$, $H_{>i}$ and $H_{<i}$ to denote the family of components in  $G^{*}-K$ with order $i$, greater than $i$, and less than $i$, respectively.

\begin{lem}\label{lem::3.6}
$H_{>t+3}=\varnothing$.
\end{lem}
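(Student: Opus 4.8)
The plan is to argue by contradiction. Suppose some component $C$ of $G^{*}-K$ has order $m:=|C|\geq t+4$; I will produce a $K_{s,t}$-minor free graph of order $n$ with strictly larger $A_\alpha$-spectral radius. The first step is an edge bound for $C$. Since $C$ is a component of $G^{*}-K$, which has the $(s,t)$-property, and since $s\geq2$, $t\geq4$ make the pair $a=1$, $b=t$ admissible (as $1\leq\min\{s,\lfloor(t+1)/2\rfloor\}$), the graph $C$ is $K_{1,t}$-minor free; being connected with $m>t+2$ vertices, Lemma~\ref{lem::2.4} gives $e(C)\leq\binom{t}{2}+m-t$. Next write $m=pt+r$ with $1\leq r\leq t$, so $p\geq1$, and $p\geq2$ whenever $r\leq3$. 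Let $H':=pK_{t}\cup K_{r}$ on the vertex set $V(C)$ (read $pK_{t}\cup K_{t}$ as $(p+1)K_{t}$ if $r=t$). Every clique occurring in $H'$ has at most $t<t+1$ vertices, hence contains no $K_{a,b}$-minor with $a+b=t+1$; being a disjoint union, $H'$ thus has the $(s,t)$-property. Since $e(H')=p\binom{t}{2}+\binom{r}{2}$,
$$e(H')-\Bigl(\tbinom{t}{2}+m-t\Bigr)=(p-1)\Bigl(\tbinom{t}{2}-t\Bigr)-\Bigl(r-\tbinom{r}{2}\Bigr),$$
and as $\binom{t}{2}-t=\tfrac{t(t-3)}{2}\geq2$ while $r-\binom{r}{2}$ equals $1,1,0$ for $r=1,2,3$ and is $\leq-2$ for $r\geq4$, a quick check of these cases (using $p\geq2$ when $r\leq3$) gives $e(H')>\binom{t}{2}+m-t\geq e(C)$; in fact $e(H')-e(C)\geq\max\bigl\{1,\ \tfrac{(m-2t)(t-3)}{2}-1\bigr\}$.

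Now let $G'$ be obtained from $G^{*}$ by replacing the induced subgraph on $V(C)$ by $H'$, keeping all other edges (in particular all edges between $K$ and $V(C)$). Then $K$ is still a clique dominating set of $G'$, and $G'-K$ is the disjoint union of $(G^{*}-K)-C$ and $H'$, each with the $(s,t)$-property, so $G'$ is $K_{s,t}$-minor free by Lemma~\ref{lem::2.10}. If $m\leq N$, then $C$ is a bounded collection of components of $G^{*}-K$, and Lemma~\ref{lem::3.2} (local edge maximality) applied with this $C$ and $H'$ forces $e(H')\leq e(C)$, contradicting the previous paragraph. Hence it remains only to rule out a component with $m>N$.

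For such a component I would perturb the Perron vector directly. Put $x_{1}=\max_{v\in C}x_{v}$ and $x_{2}=\min_{v\in C}x_{v}$. The eigenequations of $A_{\alpha}(G^{*})$, whose relevant estimates (as in the proof of Lemma~\ref{lem::3.1}) use only $\Delta(G^{*}-K)<t$ and not a bound on $|C|$, give
$$x_{1}\leq\frac{(1-\alpha)x_{0}}{\rho_{\alpha}-\alpha(s+t-2)-(1-\alpha)(t-1)},\qquad x_{2}\geq\frac{(1-\alpha)x_{0}}{\rho_{\alpha}-\alpha(s-1)}.$$
Using $\sum_{v\in C}\bigl(d_{H'}(v)-d_{C}(v)\bigr)=2\bigl(e(H')-e(C)\bigr)$ and $\alpha+(1-\alpha)=1$, the Rayleigh quotient bound gives
$$\rho_{\alpha}(G')-\rho_{\alpha}\ \geq\ \mathbf{X}^{T}\bigl(A_{\alpha}(G')-A_{\alpha}(G^{*})\bigr)\mathbf{X}\ \geq\ 2\bigl(e(H')x_{2}^{2}-e(C)x_{1}^{2}\bigr).$$
Since $\rho_{\alpha}\geq\alpha(n-1)+(1-\alpha)(s-2)$ and $n$ is sufficiently large, the two displayed inequalities yield $x_{1}^{2}\leq(1+C_{0}/n)x_{2}^{2}$ for a constant $C_{0}=C_{0}(s,t,\alpha)$, hence $e(H')x_{2}^{2}-e(C)x_{1}^{2}\geq\bigl((e(H')-e(C))-\tfrac{C_{0}}{n}\,e(C)\bigr)x_{2}^{2}$. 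Because $e(C)\leq\binom{t}{2}+m\leq2n$ while $e(H')-e(C)\geq\tfrac{(m-2t)(t-3)}{2}-1$ grows linearly in $m$, taking $N$ large (depending only on $s,t,\alpha$) makes the right-hand side positive, so $\rho_{\alpha}(G')>\rho_{\alpha}$, contradicting the maximality of $\rho_{\alpha}$.

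The step I expect to be the main obstacle is precisely the case $m>N$: Lemma~\ref{lem::3.2} only governs collections of components of bounded order, so an unbounded component must be handled by the Rayleigh-quotient comparison above, where one has to verify that the edge surplus $e(H')-e(C)$, which is linear in $m$, strictly dominates the $O(e(C)/n)=O(1)$ loss coming from $x_{1}\neq x_{2}$; the tightest constants occur at $t=4$, where $\binom{t}{2}-t$ equals only $2$. The remaining ingredients — the $(s,t)$-property of $pK_{t}\cup K_{r}$, the edge bound of Lemma~\ref{lem::2.4}, and the $K_{s,t}$-minor freeness of $G'$ from Lemma~\ref{lem::2.10} — are routine.
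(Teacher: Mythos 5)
Your proposal is correct and its core is exactly the paper's argument: write $|C|=pt+r$, bound $e(C)\leq\binom{t}{2}+|C|-t$ via Lemma \ref{lem::2.4} (since $C$ is $K_{1,t}$-minor free), replace $C$ by $H'\cong pK_t\cup K_r$, which clearly has the $(s,t)$-property, and check that $e(H')>e(C)$, contradicting the local edge maximality of Lemma \ref{lem::3.2}; your case analysis of $(p-1)\bigl(\binom{t}{2}-t\bigr)-\bigl(r-\binom{r}{2}\bigr)$ is the same computation the paper does implicitly. The one place you diverge is that you do not invoke Lemma \ref{lem::3.2} when $|C|$ exceeds the constant $N$ in its hypothesis, and instead run the Rayleigh-quotient comparison directly, using that the edge surplus grows linearly in $|C|$ while the loss from $x_1\neq x_2$ is only $O(e(C)/n)=O(1)$. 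The paper simply cites Lemma \ref{lem::3.2} for an arbitrary component in $H_{>t+3}$, even though such a component need not have bounded order, so your extra step is not wasted effort: it is a sound patch of an imprecision in the paper's own write-up (the estimates behind Lemmas \ref{lem::3.1}--\ref{lem::3.2} depend only on $\Delta(G^*-K)<t$, so your quantitative version goes through, with the tightest case $t=4$ still giving a surplus linear in $|C|$). In short: same route, with a more careful treatment of the unbounded-component case.
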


\begin{proof}
Suppose to the contrary that there exists a component $H \in H_{>t+3}$. Let $|H|=pt+r$, where $p \geq 1$ and $1 \leq r \leq t$. Since $H$ is $K_{1,t}$-minor free, by Lemma \ref{lem::2.4} we have
\begin{equation}\label{equ::27}
e(H) \leq \binom{t}{2}+|H|-t= \binom{t}{2} +(p-1) t+r.
\end{equation}
Now, let $H^{\prime} \cong p K_{t} \cup K_{r}$ with $V\left(H^{\prime}\right)=V(H)$. Clearly, $H^{\prime}$ also has the $(s, t)$-property. Since $|H|>t+3$ and $t \geq 4$, then by (\ref{equ::27}) we obtain
$
e(H)<p \binom{t}{2} + \binom{r}{2} =e\left(H^{\prime}\right)
$,
contradicting Lemma \ref{lem::3.2}.
\end{proof}

\begin{lem}\label{lem::3.10}
There exists at most one component in $H_{<t} \cup H_{t+2}$.
\end{lem}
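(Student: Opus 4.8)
The plan is to argue by contradiction via the local edge maximality of $G^*$ (Lemma \ref{lem::3.2}). Suppose $G^*-K$ has two distinct components $C_1,C_2$ whose orders both lie in $\{1,\dots,t-1\}\cup\{t+2\}$, and set $H=C_1\cup C_2$ and $m=|C_1|+|C_2|\le 2t+4$, a constant. First I would record the relevant edge bounds: if $|C_i|\le t-1$ then $e(C_i)\le\binom{|C_i|}{2}$ since $C_i\subseteq K_{|C_i|}$; if $|C_i|=t+2$ then the $(s,t)$-property forces $C_i$ to be $K_{1,t}$-minor free (note $1\le\gamma$ for $t\ge4$), so $e(C_i)\le\binom{t}{2}+2$ by Lemma \ref{lem::2.4}.

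Next I would build a competitor $H'$ on $V(H)$ as a disjoint union of complete graphs each of order at most $t$: namely $\lfloor m/t\rfloor$ copies of $K_t$ together with one copy of $K_{r'}$, where $r'=m-t\lfloor m/t\rfloor\in\{0,\dots,t-1\}$ (dropping the last term if $r'=0$). Every component of $H'$ has fewer than $t+1$ vertices, and since each $K_{a,b}$ with $a+b=t+1$ is connected, a disjoint union of such cliques contains no $K_{a,b}$-minor for any admissible $a,b$; hence $H'$ has the $(s,t)$-property. It then remains to check that $e(H')>e(H)=e(C_1)+e(C_2)$, which contradicts Lemma \ref{lem::3.2}.

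For the edge inequality I would split into three cases according to whether $|C_1|,|C_2|$ are each $\le t-1$, or one is $\le t-1$ and the other $=t+2$, or both $=t+2$. In each case $\lfloor m/t\rfloor\in\{1,2,3\}$ and $e(H')=\lfloor m/t\rfloor\binom{t}{2}+\binom{r'}{2}$, while the upper estimate on $e(C_1)+e(C_2)$ from the first paragraph is maximized, by convexity of $x\mapsto\binom{x}{2}$, at an extreme split of $m$. A short computation — using for instance the second-difference identity $\binom{n+1}{2}-2\binom{n}{2}+\binom{n-1}{2}=1$ for the tight sub-cases $\{t-1,t-1\}$, $\{t-1,t+2\}$ and $\{t+2,t+2\}$ — shows $e(H')-(e(C_1)+e(C_2))\ge1$ whenever $t\ge4$, giving the contradiction.

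I expect the main obstacle to be the bookkeeping in this last case analysis, especially the boundary values of $m$ for which the remainder clique $K_{r'}$ is tiny (for instance $m=2t+1$, forcing $H'=2K_t\cup K_1$): one must confirm the gain stays strictly positive there. A minor but important point is that for order-$(t+2)$ components the bound $\binom{t}{2}+2$ should be drawn from Lemma \ref{lem::2.4}, not Lemma \ref{lem::2.15}, since the latter assumes $\lfloor\frac{t+1}{s+1}\rfloor\le2$ while we need the estimate for all admissible $s$ and $t$.
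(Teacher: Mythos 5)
Your proposal is correct and follows essentially the same route as the paper: both argue by contradiction via Lemma \ref{lem::3.2}, replace the union of the two offending components by a disjoint union of cliques of order at most $t$ (which clearly has the $(s,t)$-property), bound $e(C_i)$ by $\binom{|C_i|}{2}$ or, via Lemma \ref{lem::2.4}, by $\binom{t}{2}+2$, and verify the strict edge gain case by case. Your remainder convention $r'\in\{0,\dots,t-1\}$ versus the paper's $1\leq r\leq t$ yields the same competitor graph, so the difference is purely cosmetic.
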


\begin{proof}
Suppose to the contrary that there exists two components $D_{1}, D_{2} \in H_{<t} \cup H_{t+2}$ with $|D_{1}|\leq|D_{2}|$. For $i=1,2$, we have $e\left(D_{i}\right) \leq\binom{\left|D_{i}\right| }{2}$ if $|D_{i}| < t$, and by
Lemma \ref{lem::2.4} we obtain
$e\left(D_{i}\right) \leq\binom{t }{2}+2$ if $|D_{i}|=t+2$.
Now, let $|D_{1}|+|D_{2}|=pt+r$, where $0 \leq p \leq 2$ and $1 \leq r \leq t$.
Moreover, let $H \cong p K_{t} \cup K_{r}$ with $V\left(H\right)=V(D_{1} \cup D_{2})$. Clearly, $H$ has the $(s, t)$-property.
However, in any case of $|D_{1}|\leq|D_{2}|<t$, $|D_{1}|<t<|D_{2}|$ or $|D_{1}|=|D_{2}|=t+2$, we always find that $e\left(D_{1} \cup D_{2}\right)<p\binom{t}{2}+\binom{r}{2}=e\left(H\right)$, contradicting Lemma \ref{lem::3.2}.
\end{proof}

\begin{lem}\label{lem::3.7}
Let $\eta$ be the number of the components in $H_{t}$. Then $\eta=O\left(\frac{n}{t}\right)$.
\end{lem}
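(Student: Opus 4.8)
The plan is to get the bound by a direct vertex count inside $G^{*}-K$, with no structural input needed. Recall that $K$ is a clique dominating set of size $s-1$, so $G^{*}-K$ has exactly $n-s+1$ vertices. By definition $H_{t}$ is the family of components of $G^{*}-K$ of order exactly $t$, and distinct components are pairwise vertex-disjoint; hence the $\eta$ components counted by $H_{t}$ together use exactly $t\eta$ distinct vertices of $G^{*}-K$. This immediately yields $t\eta\le n-s+1$, and therefore $\eta\le\frac{n-s+1}{t}<\frac{n}{t}$, which is precisely $\eta=O\!\left(\frac{n}{t}\right)$ (with implied constant $1$ in the leading term, since $s,t$ are fixed while $n$ is large).

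The only thing to be careful about is bookkeeping: the count must be performed in $G^{*}-K$ rather than in $G^{*}$, i.e. the clique $K$ supplies the harmless $-(s-1)$ correction, which does not affect the order of magnitude. No appeal to the structural lemmas (Lemma~\ref{lem::3.6}, Lemma~\ref{lem::3.10}) is required here; those stronger facts — that every large component is essentially a $K_{t}$ and that at most one component lies in $H_{<t}\cup H_{t+2}$ — would only be needed to sharpen this to $\eta=\frac{n}{t}+O(1)$, but the claimed $O(n/t)$ bound follows from the counting argument alone. Consequently there is no real obstacle in this lemma; its role is to record a crude a priori bound on the number of order-$t$ components to be invoked in the subsequent eigenvector estimates.
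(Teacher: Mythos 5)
Your proof establishes only the trivial direction of the statement, and that direction is not the one the paper needs or proves. Read in context, the assertion $\eta=O\!\left(\frac{n}{t}\right)$ is used loosely to mean that $\eta$ is \emph{of order} $n/t$; the substantive content is the lower bound. Indeed, the lemma is invoked later precisely to guarantee the \emph{existence} of components isomorphic to $K_t$ in $G^{*}-K$: one copy in Claim \ref{clm::3.2} of Lemma \ref{lem::3.8}, two copies in Lemma \ref{lem::3.9}, and $|D_1|$ copies in the proof of Lemma \ref{lem::3.13}. An upper bound $\eta\le\frac{n-s+1}{t}$ can never yield existence, so your vertex count, while correct as far as it goes, carries no information that the rest of the argument can use, and your closing remark that the structural lemmas are dispensable and that the lemma only feeds ``subsequent eigenvector estimates'' is mistaken.

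The paper's proof is exactly the part you discarded: by Lemma \ref{lem::3.6} there are no components of order greater than $t+3$, by Lemma \ref{lem::3.10} there is at most one component in $H_{<t}\cup H_{t+2}$, and an edge-maximality argument (Lemma \ref{lem::3.2}, combined with the edge bounds of Lemmas \ref{lem::2.12} and \ref{lem::2.4}, comparing against disjoint copies of $K_t$) shows that each of $H_{t+1}$ and $H_{t+3}$ contains fewer than $t$ components. Consequently all but $O(t^2)$ vertices of $G^{*}-K$ lie in components of order exactly $t$, whence $\eta\ge\frac{n-s+1-O(t^2)}{t}$, which for fixed $s,t$ and $n$ sufficiently large is as large as needed (in particular larger than $t$). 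To repair your proposal you would have to supply this lower-bound argument; the counting step you give is only the easy complementary inequality.
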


\begin{proof}
From Lemma \ref{lem::3.6}, we have $H_{>t+3}=\varnothing$. Furthermore, there exists at most one component in $H_{<t} \cup H_{t+2}$ by Lemma \ref{lem::3.10}. Note that $t\geq4$. Hence, we just need to show that the number of the components in $H_{i}$ is less than $t$ for each $i\in \{t+1, t+3\}$. Suppose to the contrary that there exists a $H_{i}$ which contains at least $t$ components for $i\in \{t+1, t+3\}$. Let $H^{\prime}_{i}$ be a disjoint union of $t$ components in $H_{i}$.
If $i=t+1$, then by Lemma \ref{lem::2.12} and $\beta=   \left\lfloor\frac{t+1}{s+1}\right\rfloor<\frac{t+1}{2}$, we have
$
e(H^{\prime}_{i}) \leq t\left( \binom{t}{2} +\beta-1\right)<(t+1) \binom{t}{2} =e\left(i K_{t}\right).
$
If $i =t+3$, then by Lemma \ref{lem::2.4}, we obtain
$
e(H^{\prime}_{i}) \leq t\left( \binom{t}{2} +i-t\right)<i\binom{t}{2}=e\left(i K_{t}\right).
$
Clearly, $iK_{t}$ has the $(s, t)$-property. In either of the above cases, we always get a contradiction to Lemma \ref{lem::3.2}.
\end{proof}

\begin{lem}\label{lem::3.5}
If $H$ is a component in $H_{t+1}$. Then $H \cong \overline{H_{s, t}}$, $e\left(\overline{H_{s, t}}\right)=\binom{t}{2}+\beta-1$ and $\beta \geq 2$.
\end{lem}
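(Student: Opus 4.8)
The plan is to show that $H$ is edge-maximal with the $(s,t)$-property, read off its edge count and the shape of $\overline{H}$ from the structural lemmas, rule out $\beta=1$ by a degree-sequence comparison, and then pin down $H$ up to isomorphism by a majorization argument feeding into Lemma~\ref{lem::3.4}.

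First, since $H$ is a connected component of $G^{*}-K$ with $|H|=t+1$, it has the $(s,t)$-property and $\Delta(H)\leq t-1$; by Lemma~\ref{lem::3.2} (local edge maximality) it is edge-maximal with the $(s,t)$-property among all graphs on $V(H)$, so Lemma~\ref{lem::2.12} applies and gives $e(H)=\binom{t}{2}+\beta-1$ together with the fact that $\overline{H}$ is a forest with exactly $\beta$ components. The second assertion of the lemma is a direct count: $H_{s,t}$ has $(\beta-1)(s+1)+(\alpha+1)=t+1$ vertices and $(\beta-1)s+\alpha=t-\beta+1$ edges, hence $e(\overline{H_{s,t}})=\binom{t+1}{2}-(t-\beta+1)=\binom{t}{2}+\beta-1=e(H)$.

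Next I would prove $\beta\geq2$. If $\beta=1$, then $e(H)=\binom{t}{2}$; compare $H$ with $H'=K_{t}\cup K_{1}$ on $V(H)$. Then $e(H')=\binom{t}{2}=e(H)$, and $H'$ has the $(s,t)$-property because every $K_{a,b}$ with $a+b=t+1$ is connected on $t+1$ vertices, more than the vertices of any component of $H'$, so $H'$ admits no such minor. As $H$ is connected on $t+1\geq2$ vertices with $\Delta(H)\leq t-1$, each partial sum of $\pi(H)$ is at most $k(t-1)$, the corresponding partial sum of $\pi(H')=(t-1,\dots,t-1,0)$, and both sequences sum to $t(t-1)$; hence $\pi(H)\prec\pi(H')$. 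Lemma~\ref{lem::3.4} then forces $\pi(H)=\pi(H')$, so $\delta(H)=0$, contradicting connectivity. Thus $\beta\geq2$, so $t+1\geq2(s+1)$ and consequently $\gamma=\min\{s,\lfloor\frac{t+1}{2}\rfloor\}=s$.

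For $H\cong\overline{H_{s,t}}$, note first that by Lemma~\ref{lem::2.11} (with $\gamma=s$) every component of the forest $\overline{H}$ has at least $s+1$ vertices, while $\overline{H_{s,t}}$ is connected (being the complement of a disjoint union of $\beta\geq2$ stars), has the $(s,t)$-property by Lemma~\ref{lem::2.16}(i), and satisfies $e(\overline{H_{s,t}})=e(H)$, so it is an admissible comparison graph in Lemma~\ref{lem::3.4}. The key step is $\pi(H)\prec\pi(\overline{H_{s,t}})$, which after complementation is equivalent to $\pi(\overline{H})\prec\pi(H_{s,t})$, i.e.\ to the statement that for every $j$ the sum of the $j$ largest degrees of the forest $\overline{H}$ does not exceed that of $H_{s,t}$ (the total degree being $2(t-\beta+1)$ in both). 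Here one uses the elementary bound that in a tree on $m$ vertices the sum of the $j$ largest degrees is at most $m+j-2$ (the remaining $m-j$ degrees being each $\geq1$), with equality attained by the star; distributing $j$ optimally over the $\beta$ components of $\overline{H}$, whose sizes sum to $t+1$ with each $\geq s+1$ (so the largest is $\leq\alpha+1$), a short case check ($j<\beta$ versus $j\geq\beta$) shows this bound equals the value attained by $H_{s,t}=(\beta-1)K_{1,s}\cup K_{1,\alpha}$. Hence $\pi(\overline{H})\prec_{w}\pi(H_{s,t})$, and with equal totals $\pi(H)\prec\pi(\overline{H_{s,t}})$; Lemma~\ref{lem::3.4} then yields $\pi(\overline{H})=\pi(H_{s,t})=(\alpha,s,\dots,s,1,\dots,1)$. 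Finally, since $s\geq2$ exactly $\beta$ vertices of $\overline{H}$ have degree $\geq2$; as $\overline{H}$ has $\beta$ components, each with $\geq s+1\geq3$ vertices and hence at least one non-leaf, each component has precisely one non-leaf and is therefore a star, with centers of degrees $\alpha,s,\dots,s$. Thus $\overline{H}\cong(\beta-1)K_{1,s}\cup K_{1,\alpha}=H_{s,t}$, i.e.\ $H\cong\overline{H_{s,t}}$.

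The main obstacle is the majorization inequality $\pi(H)\prec\pi(\overline{H_{s,t}})$, and then, once Lemma~\ref{lem::3.4} upgrades it to $\pi(\overline{H})=\pi(H_{s,t})$, turning that degree-sequence equality into an actual isomorphism; both are purely combinatorial and, given Lemmas~\ref{lem::2.11}, \ref{lem::2.12} and \ref{lem::2.16}, rest respectively on the tree degree-sum bound above and on the observation that a $\beta$-component forest with all components of size $\geq s+1$ realizing the sequence $(\alpha,s,\dots,s,1,\dots,1)$ must be a disjoint union of $\beta$ stars.
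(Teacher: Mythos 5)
Your proposal is correct, but it takes a genuinely different route from the paper for the heart of the argument. Both proofs begin the same way: Lemma~\ref{lem::3.2} makes $H$ edge-maximal with the $(s,t)$-property, so Lemmas~\ref{lem::2.11} and~\ref{lem::2.12} give $e(H)=\binom{t}{2}+\beta-1$ and that $\overline{H}$ is a forest with $\beta$ components of order at least $\gamma+1$. From there the paper argues locally: it applies the leaf-switching Lemma~\ref{lem::2.13} twice (once to show every component of $\overline{H}$ is a star, once to show at most one star exceeds $K_{1,s}$), each time producing a comparison graph that strictly majorizes $\pi(H)$ and contradicts Lemma~\ref{lem::3.4}, and it deduces $\beta\geq 2$ afterwards from connectivity of $H$ (a single spanning star in $\overline{H}$ would make $H\cong K_1\cup K_t$ disconnected). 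You avoid Lemma~\ref{lem::2.13} entirely: you get $\beta\geq 2$ by comparing $H$ with $K_t\cup K_1$, where Lemma~\ref{lem::3.4} would force an isolated vertex in the connected $H$; and you then prove the global majorization $\pi(H)\prec\pi\left(\overline{H_{s,t}}\right)$ in one shot, via the complementation equivalence and the bound that in a tree on $m$ vertices the $j$ largest degrees sum to at most $m+j-2$. I checked the case analysis you gloss over: using $q$ components of sizes $m_i\geq s+1$ summing to $t+1$, the degree sum is at most $t+1-(\beta-q)(s+1)+j-2q$, which is increasing in $q$ since $s\geq 2$, and at $q=\min\{j,\beta\}$ it equals the corresponding partial sum of $\pi\left(H_{s,t}\right)$ in both ranges $j\leq\beta$ and $j>\beta$; so the weak majorization, and with equal totals the majorization, indeed holds. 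Your final rigidity step is also sound: $\pi\left(\overline{H}\right)=\pi\left(H_{s,t}\right)$ leaves exactly $\beta$ vertices of degree at least $2$, one per tree component, forcing a star forest with centers of degrees $\alpha,s,\dots,s$, i.e.\ $\overline{H}\cong H_{s,t}$. The trade-off: the paper's switching argument is shorter and purely local, while yours makes explicit the extremal fact that $\overline{H_{s,t}}$ maximizes every partial degree sum among the admissible graphs and invokes Lemma~\ref{lem::3.4} only once per conclusion.
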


\begin{proof}
Recall that $\beta=\left\lfloor\frac{t+1}{s+1}\right\rfloor$ and $\gamma = \min \left\{s,\left\lfloor\frac{t+1}{2}\right\rfloor\right\}$. Clearly, $H$ has the $(s, t)$-property.
Moreover, we find that $H$ is an edge-maximal graph by Lemma \ref{lem::3.2}.
Hence, by Lemmas \ref{lem::2.11} and \ref{lem::2.12}, we get that $e(H)=\binom{t}{2}+\beta-1$, $\overline{H}$ is a forest with $\beta$ components and each of which has at least $\gamma+1$ vertices. Suppose that $\overline{H}$ contains a component $D$ which is not a star. Then $D$ contains two leaves $v_{1}$ and $v_{2}$ with distinct neighbors, say $N_{D}(v_{i})=\{u_{i}\}$ for $i=1,2$. Assume without loss of generality that $d_{D}(u_{1}) \leq d_{D}(u_{2})$. Then  $d_{H}(u_{1}) \geq d_{H}(u_{2})$. Let $H^{\prime}=H-\{u_{2} v_{1}\}+\{u_{1} v_{1}\}$. By Lemma \ref{lem::2.11}, $H^{\prime}$ also has the $(s, t)$-property. Furthermore, by Lemma \ref{lem::2.13} we get that $\pi(H) \prec \pi\left(H^{\prime}\right)$ and $\pi(H) \neq \pi\left(H^{\prime}\right)$, which contradicts Lemma \ref{lem::3.4}. Therefore, $\overline{H}$ is a star forest. Notice that $H$ is connected and each component of $\overline{H}$ is a star of order at least $\gamma+1$. Thus, $\overline{H}$ has at least two components, that is, $\beta \geq 2$. Then by $\beta=\left\lfloor\frac{t+1}{s+1}\right\rfloor$ we have $s\leq\left\lfloor\frac{t-1}{2}\right\rfloor$. Note that $\gamma = \min \left\{s,\left\lfloor\frac{t+1}{2}\right\rfloor\right\}$. So, $\gamma=s$. That is, each component of $\overline{H}$ has at least $s+1$ vertices.

Clearly, $|\overline{H_{s,t}}|=t+1$, and we get that $\overline{H_{s, t}}$ has the $(s, t)$-property by by Lemma \ref{lem::2.16}(i).
In order to show that $H \cong \overline{H_{s, t}}$, we just need to show that $\overline{H}$ has at most one component not isomorphic to $K_{1, s}$. Suppose to the contrary that there exists two components $D_{1}$ and $D_{2}$ with $|D_{2}| \geq|D_{1}| \geq s+2$. For $i \in\{1,2\}$, let $v_{i}$ be a leaf and $u_{i}$ be the central vertex of $D_{i}$. Let $H^{\prime}=H-\{u_{2} v_{1}\}+\{u_{1} v_{1}\}$. We get that $\pi(H) \prec \pi\left(H^{\prime}\right)$ and $\pi(H) \neq \pi\left(H^{\prime}\right)$ in a similar way as above, which contradicts Lemma \ref{lem::3.4}. Thus, $H \cong \overline{H_{s, t}}$.
\end{proof}

\begin{lem}\label{lem::3.12}
There are at most $2(\beta-1)$ components in $H_{t+1}$.
\end{lem}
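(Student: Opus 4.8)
The plan is to argue by contradiction and, since the natural edge-count comparison will turn out to be an exact equality, to push the argument to the level of degree-sequence majorization. Recall from Lemma~\ref{lem::3.5} that $H_{t+1}\neq\varnothing$ already forces $\beta\geq 2$ and that every component in $H_{t+1}$ is isomorphic to $\overline{H_{s,t}}$ with $e(\overline{H_{s,t}})=\binom{t}{2}+\beta-1$. Suppose for contradiction that $H_{t+1}$ contains at least $2\beta-1$ components, and let $H$ be the union of $2\beta-1$ of them. Then $|H|=(2\beta-1)(t+1)$ is a constant, so the local lemmas of Section~3 apply to $H$, and $e(H)=(2\beta-1)\left(\binom{t}{2}+\beta-1\right)$.

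Next I would introduce the competitor $H'\cong(2\beta-1)K_{t}\cup K_{2\beta-1}$ with $V(H')=V(H)$. This is legitimate: since $s\geq 2$ we have $\beta\leq (t+1)/3$, hence $1\leq 2\beta-1<t$, and therefore $H'$ has exactly $(2\beta-1)t+(2\beta-1)=(2\beta-1)(t+1)=|H|$ vertices. Each component of $H'$ has fewer than $t+1$ vertices, so $H'$ has no $K_{a,b}$-minor with $a+b=t+1$ and thus has the $(s,t)$-property. A direct count gives $e(H')=(2\beta-1)\binom{t}{2}+\binom{2\beta-1}{2}=(2\beta-1)\binom{t}{2}+(2\beta-1)(\beta-1)=e(H)$, so the edge counts of $H$ and $H'$ coincide; Lemma~\ref{lem::3.2} then yields nothing, and the comparison has to be made through degree sequences.

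To that end, recall $\alpha=t-(\beta-1)(s+1)$, so in $\overline{H_{s,t}}$ the $(\beta-1)s+\alpha=t-\beta+1$ vertices that are leaves of $H_{s,t}$ have degree $t-1$, the $\beta-1$ centers of the copies of $K_{1,s}$ have degree $t-s$, and the center of $K_{1,\alpha}$ has degree $t-\alpha$; one checks $t-1>t-s\geq t-\alpha$ and $t-1>2\beta-2$. Hence the non-increasing degree sequence of $H$ has $(2\beta-1)(t-\beta+1)$ entries equal to $t-1$, then $(2\beta-1)(\beta-1)$ entries equal to $t-s$, then $2\beta-1$ entries equal to $t-\alpha$, while that of $H'$ has $(2\beta-1)t$ entries equal to $t-1$ followed by $2\beta-1$ entries equal to $2\beta-2$. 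The first $(2\beta-1)(t-\beta+1)$ partial sums of the two sequences agree, and since $t-\beta+1<t$ the subsequent entries of $\pi(H)$ are at most $t-s<t-1$ whereas those of $\pi(H')$ are still $t-1$; a short computation with these explicit sequences then gives $\pi(H)\prec_{w}\pi(H')$ with a strict partial-sum inequality somewhere. As $e(H)=e(H')$ forces the two sequences to have the same total, we obtain $\pi(H)\prec\pi(H')$ and $\pi(H)\neq\pi(H')$, contradicting Lemma~\ref{lem::3.4}. Therefore $H_{t+1}$ has at most $2(\beta-1)$ components.

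The step I expect to be the crux is precisely that the edge count of $2\beta-1$ copies of $\overline{H_{s,t}}$ equals that of $(2\beta-1)K_{t}\cup K_{2\beta-1}$, so one cannot conclude from edge maximality alone and must instead track degree sequences; the majorization $\pi(H)\prec\pi(H')$ itself hinges on the identity $(\beta-1)s+\alpha=t-\beta+1$ (that is, $\alpha=t-(\beta-1)(s+1)$), which fixes the number of vertices of $\overline{H_{s,t}}$ of maximum degree $t-1$, together with the elementary verification that the running partial sums of $\pi(H')$ never fall below those of $\pi(H)$.
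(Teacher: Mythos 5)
Your proof is correct and follows essentially the same route as the paper: comparing $2\beta-1$ copies of $\overline{H_{s,t}}$ with $(2\beta-1)K_{t}\cup K_{2\beta-1}$, noting the edge counts coincide, and then deriving the contradiction from degree-sequence majorization via Lemma \ref{lem::3.4}, using $\delta\left(\overline{H_{s,t}}\right)=(s+1)(\beta-1)>2\beta-2$. The only difference is organizational: the paper first excludes $2\beta$ or more components by a strict edge-count inequality (Lemma \ref{lem::3.2}) and then treats exactly $2\beta-1$, whereas you apply the majorization argument directly to any $2\beta-1$ components, which indeed suffices.
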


\begin{proof}
The case $H_{t+1}=\varnothing$ is trivial. Assume that $H_{t+1} \neq \varnothing$. Then by Lemma \ref{lem::3.5}, we have $H \cong \overline{H_{s, t}}$, $e(H)=\binom{t}{2}+\beta-1$ and $\beta \geq 2$ for each component $H \in H_{t+1}$. If there exists at least $2\beta$ components in $H_{t+1}$. Then we select $2 \beta$ copies of $\overline{H_{s, t}}$ and denote it by $H^{\prime}_{t+1}$. Now let $H^{\prime\prime}_{t+1} \cong 2 \beta K_{t} \cup K_{2 \beta}$. Since $2 \beta<t$, then $H^{\prime\prime}_{t+1}$ also has the $(s, t)$-property. However, we find that
$$
e\left(H^{\prime\prime}_{t+1}\right)=2 \beta\binom{t}{2}+ \binom{2 \beta}{2} >2 \beta\left(\binom{t}{2}+\beta-1\right)=e(H^{\prime}_{t+1}),
$$
which contradicts Lemma \ref{lem::3.2}. Hence, there exists at most $2\beta-1$ components in $H_{t+1}$.
If there are exactly $2\beta-1$ components in $H_{t+1}$. Then  $H_{t+1}\cong(2 \beta-1)\overline{H_{s, t}}$. Let  $H^{\prime\prime\prime}_{t+1} \cong(2 \beta-1) K_{t} \cup K_{2 \beta-1}$. Thus,  $e\left(H^{\prime\prime\prime}_{t+1}\right)=e(H_{t+1})$. Note that $H_{s, t} \cong (\beta-1) K_{1, s} \cup K_{1, \alpha} $, where $\alpha=t-(s+1)(\beta-1) \geq s$. Thus we obtain
\begin{equation}\label{equ::30}
\delta\left(\overline{H_{s, t}}\right)=(s+1)(\beta-1)>2(\beta-1),
\end{equation}
which implies that $\delta(H_{t+1})>2 \beta-2$. Clearly,  $\pi\left(H^{\prime\prime\prime}_{t+1}\right)=(t-1, \ldots, t-1,2 \beta-2, \ldots, 2 \beta-2)$. Then $\pi(H_{t+1}) \prec \pi\left(H^{\prime\prime\prime}_{t+1}\right)$, which contradicts Lemma \ref{lem::3.4}. Thus, there exists at most $2(\beta-1)$ components in $H_{t+1}$.
\end{proof}

\begin{lem}\label{lem::3.9}
$H_{t+3}=\varnothing$.
\end{lem}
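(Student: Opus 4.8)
The plan is to argue by contradiction. Suppose some component $H\in H_{t+3}$ exists; then $H$ is connected, $|H|=t+3$, $H$ has the $(s,t)$-property, and $\Delta(H)\le\Delta(G^{*}-K)<t$. The first step is to pin down $e(H)$. Since the $(s,t)$-property forces $H$ to be $K_{1,t}$-minor free, Lemma~\ref{lem::2.4} yields $e(H)\le\binom{t}{2}+|H|-t=\binom{t}{2}+3$; conversely, the graph $K_{t}\cup K_{3}$ on $V(H)$ has the $(s,t)$-property (each of its components has fewer than $t+1$ vertices, hence no $K_{a,b}$-minor with $a+b=t+1$) and $\binom{t}{2}+3$ edges, so the local edge maximality of $G^{*}$ (Lemma~\ref{lem::3.2}) gives $e(H)\ge\binom{t}{2}+3$. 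Thus $e(H)=\binom{t}{2}+3$, and $H$ is an edge-maximal $(s,t)$-property graph of order $t+3$ with $\Delta(H)\le t-1$.

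The crucial step is to prove $\delta(H)\ge 2$. Suppose instead $v$ is a leaf of $H$ with $N_{H}(v)=\{u\}$. Then $H-v$ is a connected $(s,t)$-property graph of order $t+2$ with $e(H-v)=\binom{t}{2}+2$ and $\Delta(H-v)\le t-1$, and $d_{H-v}(u)\le t-2$ (else $d_{H}(u)=t$). Here one uses the classification of edge-maximal connected $(s,t)$-property graphs of order $t+2$: when $\beta\le 2$, Lemma~\ref{lem::2.15} says $\overline{H-v}$ is the Petersen graph $H^{\star}$ or some $H_{a,b,c}$, while the regime $\beta\ge 3$ is disposed of by a direct clique-packing comparison. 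In each surviving case, reattaching the pendant vertex $v$ at a vertex of $H-v$ of degree $\le t-2$ is shown to create a forbidden $K_{a,b}$-minor of $H$, a contradiction. (An alternative, $\beta$-free route: add an edge $vw$ from $v$ to a vertex of degree $\le t-2$; any $K_{a,b}$-minor of $H+vw$ with $a+b=t+1$, $a\le\gamma$, must use $vw$, but $\deg_{H+vw}(v)=2$ forces $v$ to be a leaf of its branch set, and the minor then re-routes inside $H$ using the pendant edge $uv$, contradicting the $(s,t)$-property of $H$; hence $H+vw$ has the $(s,t)$-property, contradicting Lemma~\ref{lem::3.2}.) Therefore $\delta(H)\ge 2$.

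Since every degree of $H$ lies in $\{2,3,\dots,t-1\}$, the non-increasing sequence $\pi(H)$ is weakly majorized by $\pi(K_{t}\cup K_{3})=(t-1,\dots,t-1,2,2,2)$, and the two have the same sum $2e(H)$, so $\pi(H)\prec\pi(K_{t}\cup K_{3})$. As $K_{t}\cup K_{3}$ on $V(H)$ has the $(s,t)$-property, the local degree sequence majorization (Lemma~\ref{lem::3.4}) forces $\pi(H)=(t-1,\dots,t-1,2,2,2)$. To finish, let $A$ be the $t$ vertices of degree $t-1$ and $B$ the three of degree $2$. Counting the non-incidences at vertices of $A$ shows that $H[A]$ misses exactly $3-e(H[B])$ edges and that $e(H[B])\le 2$ (if $e(H[B])=3$, then $B$ induces a triangle with no edge to $A$, a component of $H$, contradicting connectivity). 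Hence $H[A]$ is not complete; fix non-adjacent $a_{1},a_{2}\in A$. They have at least $t-3$ common neighbours, and a short case analysis — bridging the remaining non-neighbours of $a_{1}$ through vertices of $B$, or using a degree-$2$ vertex of $B$ whose two neighbours are $a_{1}$ and $a_{2}$ — produces $t-1$ pairwise disjoint connected subgraphs each adjacent to both $a_{1}$ and $a_{2}$. Together with $\{a_{1}\}$ and $\{a_{2}\}$ these branch sets form a $K_{2,t-1}$-minor of $H$; since $2\le\gamma=\min\{s,\lfloor\frac{t+1}{2}\rfloor\}$ and $2+(t-1)=t+1$, this contradicts the $(s,t)$-property of $H$. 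So no such $H$ exists, i.e.\ $H_{t+3}=\varnothing$.

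I expect the step $\delta(H)\ge 2$ to be the main obstacle: it rests on the precise structure of edge-maximal order-$(t+2)$ graphs with the $(s,t)$-property (so the case $\beta\ge 3$ needs a separate, easier argument) and on a somewhat delicate minor re-routing to rule out a pendant extension or an added edge at a leaf. The concluding majorization argument is routine, but the explicit construction of the $K_{2,t-1}$-minor from the degree sequence $(t-1,\dots,t-1,2,2,2)$ also calls for a careful case check.
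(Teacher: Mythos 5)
Your opening steps match the paper: the computation $e(H)=\binom{t}{2}+3$, the reduction to $\delta(H)\ge 2$ via Lemma \ref{lem::2.15} (after disposing of $\beta\ge 3$ by a clique comparison), and the majorization step forcing $\pi(H)=(t-1,\dots,t-1,2,2,2)$ are all as in the paper's proof. The genuine gap is your final step. You claim that connectivity plus this degree sequence alone (pick non-adjacent $a_1,a_2\in A$, use their $\ge t-3$ common neighbours, and bridge the remaining non-neighbours through $B$) always yields a $K_{2,t-1}$-minor. That is false. Take $H$ obtained from $K_t$ on $A=\{a_1,\dots,a_t\}$ by deleting the two independent edges $a_1a_2$ and $a_3a_4$, adding a vertex $w_3$ joined to $a_1$ and $a_3$, and adding a path $a_2w_1w_2a_4$ through two further vertices $w_1,w_2$. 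This graph is connected with degree sequence $((t-1)^{t},2^{3})$ and $e(H)=\binom{t}{2}+3$, but it has \emph{no} $K_{2,t-1}$-minor: a model would need $t+1$ branch sets among $t+3$ vertices, so the number of unused vertices plus the total excess of non-singleton sets equals $2$, and a short budget check on the three degree-two vertices (with neighbourhoods $\{a_2,w_2\}$, $\{w_1,a_4\}$, $\{a_1,a_3\}$) rules out every placement; in particular your two named devices fail because no vertex of $B$ sees both ends of one missing edge and the $B$-path does not join the two ends of one missing edge. This graph is excluded only because it contains a $K_{1,t}$-minor (contract $a_1a_4$: the contracted vertex has degree $t$), i.e.\ to finish one must exploit the full $(s,t)$-property (especially $K_{1,t}$-minor-freeness) and analyse how the degree-two vertices attach to the missing edges of $H[A]$ — which is precisely the content the paper supplies differently, via Claim \ref{clm::3.6}, the eigenvector comparison of Claim \ref{clm::3.7}, and an edge swap that strictly increases $\rho_\alpha$; the paper's contradiction at this stage is spectral, not combinatorial. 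Your route may well be repairable (the configurations with no $K_{2,t-1}$-minor appear to always carry a $K_{1,t}$-minor), but the "short case analysis" you invoke is the actual crux and is missing.

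Two secondary points. First, in the step $\delta(H)\ge 2$ you defer the key contractions (for $\overline{H_{a,b,c}}$ and $\overline{H^{\star}}$ the paper produces a $K_{1,t}$-minor, the Petersen case using that non-adjacent vertices there have exactly one common neighbour); deferring is acceptable since you follow the paper's route, but your "alternative, $\beta$-free route" (add an edge $vw$ at the leaf and re-route any minor inside $H$) is not justified as stated: when the branch set of $v$ is a singleton on the small side, or when $u$ lies in another branch set, the proposed re-routing does not obviously preserve the number of branch sets, so this alternative should not be relied upon. Second, note that applying Lemma \ref{lem::2.15} to $H-v$ requires $\beta\le 2$, so the $\beta\ge 3$ reduction (which in the paper uses Lemma \ref{lem::3.7} to supply two $K_t$-components) must be done before, not inside, that step.
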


\begin{proof}
Suppose to the contrary that there exists a component $H$ of $G^{*}-K$ with $|H|=t+3$. Then we have $e(H) \leq\binom{t }{2}+3$ by Lemma \ref{lem::2.4}. Clearly, $K_{t} \cup K_{3}$ also has the $(s, t)$-property. Then by Lemma \ref{lem::3.2}, we have $e(H) \geq  e\left(K_{t} \cup K_{3}\right)=\binom{t}{2}+3$. Hence, $e(H)=\binom{t}{2}+3$.
Suppose that $\beta \geq 3$. Since $|\overline{H_{s, t}}|=t+1$, $\overline{H_{s,t}}$ has the $(s, t)$-property by Lemma \ref{lem::2.16}(i) and $e\left(\overline{H_{s, t}}\right)=\binom{t}{2}+\beta-1$ by Lemma \ref{lem::3.5}. Moreover, from Lemma \ref{lem::3.7}, there exists two components which are isomorphic to $K_{t}$ in $G^{*}-K$. Then
$
e\left(H \cup 2K_{t}\right)=3\binom{t}{2}+3<3\binom{t}{2}+3(\beta-1)=e\left(3\overline{H_{s, t}}\right),
$
which contradicts Lemma \ref{lem::3.2}. Hence $\beta \leq 2$.
	
\begin{clm}\label{clm::3.5}
$2 \leq d_{H}(v) \leq t-1$ for each $v \in V(H)$.
\end{clm}
	
\begin{proof}
Clearly, $\Delta(H)\leq\Delta(G^{*}-K)\leq t-1$. Thus, it remains to show that $\delta(H) \geq 2$. Suppose to the contrary that $\delta(H)=1$. Take an arbitrary vertex $v \in V(H)$ with $d_{H}(v)=1$. Then $H-\{v\}$ is connected and  $e(H-v)=\binom{t}{2}+2$.
By Lemmas \ref{lem::2.15} and \ref{lem::3.2},
$H-v$ is isomorphic to either $\overline{H^{\star}}$ or some $\overline{H_{a, b, c}}$ with $a+b+c=t-1$. If $H-v$ is isomorphic to some $\overline{H_{a, b, c}}$, by contracting the edges $u_{1} u_{2}$ and $u_{2} w$ from Fig. \ref{fig-1}, we find that $\overline{H_{a, b, c}}$ contains a $K_{t}$-minor, this implies that $H$ contains a $K_{1, t}$-minor, a contradiction. Hence, $H-v \cong \overline{H^{\star}}$. Now, let $vuw$ be a path of length $2$ in $H$. Note that any two non-adjacent vertices in the Petersen graph $H^{\star}$ have exactly one common neighbor. So, if we contract $uw$ in $H-\{v\}$, then the new vertex is of degree $t-1$ in the resulting graph. Consequently, if we contract $uw$ in $H$, then the new vertex is of degree $t$, which implies that $H$ contains a $K_{1, t}$-minor, a contradiction.
\end{proof}
By $e(H)=\binom{t}{2}+3$ and Claim \ref{clm::3.5}, we have $\pi(H) \prec (t-1, \ldots, t-1,2,2,2)=\pi\left(K_{t} \cup K_{3}\right)$. Then by Lemma \ref{lem::3.4}, we obtain $\pi(H)=\pi\left(K_{t} \cup K_{3}\right)$.
Let $M_{1}=\{v \in V(H) \mid d_{H}(v)=2\}$ and $M_{2}=\cup_{v \in M_{1}} N_{H}(v) \backslash M_{1}$. Clearly, $1 \leq d_{G[M_{1}]}(u) \leq 3$ for any $u \in M_{2}$.
	
\begin{clm}\label{clm::3.6}
$d_{G[M_{1}]}(u)=1$ for any $u \in M_{2}$.
\end{clm}
	
\begin{proof}
Let $L$ be the set of non-adjacent vertex-pairs in $M_{2}$. Since $|V(H) \backslash M_{1}|=t$ and $d_{H}(u)=t-1$ for each $u \in V(H) \backslash M_{1}$, we have $|L|=\frac{1}{2} e(M_{1}, M_{2})$, and so $1 \leq|L| \leq 3$. Suppose that $d_{G[M_{1}]}\left(u_{0}\right)=c \in\{2,3\}$ for some $u_{0} \in M_{2}$. Then $u_{0}$ has exactly $c$ non-neighbors, say $\left\{u_{1}, \ldots, u_{c}\right\}$, in $M_{2}$. If $c=3$, then $L=\left\{\left(u_{0}, u_{i}\right) \mid i=1,2,3\right\}$  and there are three paths $u_{0} v_{i} u_{i}$ in $H$, where $v_{i} \in M_{1}$ and $i \in\{1,2,3\}$. Now, if we contract two of the three paths into edges, then the resulting graph is isomorphic to $S^{1}(K_{t})$. However, $S^{1}\left(K_{t}\right)$ contains $K_{2, t-1}$ as a subgraph, which contradicts the fact that $H$ has the $(s,t)$-property. Therefore, $c=2$. Since $t \geq 4$, we can find a vertex $u_{3} \in N_{H}(u_{0}) \backslash M_{1}$. Note that $|L| \leq 3$. Hence, $\{u_{1}, u_{2}\} \cap N_{H}(u_{3}) \neq \varnothing$. Moreover, if $u_{1}, u_{2} \in N_{H}\left(u_{3}\right)$, then $H$ contains a double star with a non-pendant edge $u_{0} u_{3}$ and $t$ leaves, which implies that $H$ has a $K_{1, t}$-minor, a contradiction. So we may assume that  $u_{1} u_{3} \in E(H)$ and $u_{2} u_{3} \notin E(H)$. Since $|L| \leq 3$, we have $u_{1} u_{2} \in E(H)$. It follows that $P=u_{0} u_{3} u_{1} u_{2}$ is an induced path in $H$. Now, $|L|=3$ and $M_{2}=\left\{u_{0}, u_{1}, u_{2}, u_{3}\right\}$. Furthermore, $d_{G[M_{1}]}\left(u_{i}\right)=2$ for $i \in\{0,2\}$ and  $d_{G[M_{1}]}(u_{i})=1$ for $i \in\{1,3\}$. Then there exists a double star with a non-pendant edge in $E(P)$ and $t$ leaves, which implies that $H$ has a $K_{1, t}$-minor, a contradiction.
\end{proof}
	
\begin{clm}\label{clm::3.7}
$x_{u}>x_{v}$ for any two vertices $u, v$ with $u \in M_{2}$ and $v \in M_{1}$.
\end{clm}
\begin{proof}
Let  $x_{1}=\max\limits_{v \in V(H)}x_{v}$ and $x_{2}=\min\limits_{v\in V(H)}x_{v}$.   Note that $x_{0}=\sum\limits_{v\in K}x_{v}$.
By (\ref{equ::8}) we have $$x_{u}\geq\frac{(1-\alpha)(d_{H}(u)x_{2} +x_{0})}{\rho_{\alpha}-\alpha(s-1)}\ \ \mbox{and} \ \  x_{v}< \frac{(1-\alpha)(d_{H}(v)x_{1} +x_{0})}{\rho_{\alpha}-(\alpha(t+s-1)+(1-\alpha)t)}.$$
Since $d_{H}(u)>d_{H}(v)$, by Lemma \ref{lem::3.1} we have $d_{H}(u)x_{2}>d_{H}(v)x_{1}$.
Now we see that
$$
x_{u}-x_{v}> (1-\alpha)\left( \frac{d_{H}(u)x_{2} +x_{0}}{\rho_{\alpha}-\alpha(s-1)}-\frac{d_{H}(v)x_{1} +x_{0}}{\rho_{\alpha}-(\alpha(t+s-1)+(1-\alpha)t)}\right)  >0,
$$
since $\rho_{\alpha}\geq\alpha(n-1)+(1-\alpha)(s-2)$, $0<\alpha<1$, $n$ is sufficiently large, and $s, t$ are constants.
\end{proof}
	
By Claim \ref{clm::3.6}, each $u_{i}\in M_{2}$ has a unique neighbor $v_{i} \in M_{1}$. Thus, each $u_{i}\in M_{2}$ has a unique non-neighbor $u_{j} \in V(H) \backslash M_{1}$, where $u_{j} \in M_{2}$.
If $v_{i}=v_{j}$ for some $(u_{i}, u_{j}) \in L$, then $u_{i}$ and $u_{j}$ have $t-1$ common neighbors in $H$, which implies that $H$ contains a copy of $K_{2, t-1}$, a contradiction.
Hence, $v_{i} \neq v_{j}$ for each $\left(u_{i}, u_{j}\right) \in L$. If $v_{i} v_{j} \in E(H)$ for some $\left(u_{i}, u_{j}\right) \in L$,
then we can also obtain a copy of $K_{2, t-1}$ by contracting the edge $v_{i} v_{j}$, a contradiction. So, $v_{i} v_{j} \notin E(H)$ for each $\left(u_{i}, u_{j}\right) \in L$. Now, Let
$$
H^{\prime}=H-\left\{u_{i} v_{i}, u_{j} v_{j} \mid\left(u_{i}, u_{j}\right) \in L\right\}+\left\{u_{i} u_{j}, v_{i} v_{j} \mid\left(u_{i}, u_{j}\right) \in L\right\},
$$
and $G^{\prime}=G^{*}-E(H)+E\left(H^{\prime}\right)$.
Clearly, $H^{\prime} \cong K_{t} \cup K_{3}$, and so $H^{\prime}$ has the $(s,t)$-property. Furthermore, by Claim \ref{clm::3.7} we have
$$
\begin{aligned}
\rho_{\alpha}(G^{\prime})-\rho_{\alpha} \geq& \sum_{\left(u_{i}, u_{j}\right) \in L} ((\alpha x^{2}_{u_{i}} +2(1-\alpha)x_{u_{i}}x_{u_{j}} +\alpha x^{2}_{u_{j}})+(\alpha x^{2}_{v_{i}}+2(1-\alpha)x_{v_{i}}x_{v_{j}} +\alpha x^{2}_{v_{j}})\\
&-(\alpha x^{2}_{u_{i}}+2(1-\alpha)x_{u_{i}}x_{v_{i}} +\alpha x^{2}_{v_{i}})-(\alpha x^{2}_{u_{j}} +2(1-\alpha)x_{u_{j}}x_{v_{j}}+ \alpha x^{2}_{v_{j}})) \\
=&\sum_{\left(u_{i}, u_{j}\right) \in L} 2(1-\alpha)(x_{u_{i}}-x_{v_{j}})(x_{u_{j}}-x_{v_{i}})>0,
\end{aligned}
$$
a contradiction.
\end{proof}

\begin{lem}\label{lem::3.8}
If $H$ is a component in $H_{t+2}$. Then $\beta \leq 2$, moreover, $H \cong S^{1}\left(\overline{H_{s, t}}\right)$ for $\beta=2$ and $H \cong \overline{H^{\star}}$ for $\beta=1$.
\end{lem}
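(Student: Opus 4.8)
The plan is as follows. Since $H$ is an induced subgraph of $G^{*}-K$, it inherits the $(s,t)$-property, so by Lemma~\ref{lem::3.2} we have $e(H')\le e(H)$ for every graph $H'$ on $V(H)$ with the $(s,t)$-property, and being $K_{1,t}$-minor free, $H$ satisfies $e(H)\le\binom{t}{2}+2$ by Lemma~\ref{lem::2.4}. To prove $\beta\le 2$ I would argue by contradiction. If $\beta\ge 3$, then by Lemmas~\ref{lem::3.6}, \ref{lem::3.9}, \ref{lem::3.10} and \ref{lem::3.12} the number of components of $G^{*}-K$ not of order $t$ is bounded, and each component of order $t$ is $K_{t}$ (every $t$-vertex graph has the $(s,t)$-property, hence is edge-maximal by Lemma~\ref{lem::3.2}, hence complete), so $G^{*}-K$ contains at least two copies of $K_{t}$. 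Noting $e(\overline{H_{s,t}})=\binom{t+1}{2}-(t+1-\beta)=\binom{t}{2}+\beta-1$ (as $\overline{H_{s,t}}$ is the complement of a star forest on $t+1$ vertices with $\beta$ components) and that $\overline{H_{s,t}}$ has the $(s,t)$-property by Lemma~\ref{lem::2.16}(i), I would replace the three components $H\cup K_{t}\cup K_{t}$ by $2\overline{H_{s,t}}\cup K_{t}$ on the same $3t+2$ vertices; this graph has the $(s,t)$-property, so Lemma~\ref{lem::3.2} forces $3\binom{t}{2}+2(\beta-1)\le e(H)+2\binom{t}{2}\le 3\binom{t}{2}+2$, giving $\beta\le 2$.

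Next I would pin down $e(H)$. For $\beta=2$ the graph $S^{1}(\overline{H_{s,t}})$ is connected, on $t+2$ vertices, with $\binom{t}{2}+2$ edges, and has the $(s,t)$-property (Lemma~\ref{lem::2.16}(i)); placing a copy of it on $V(H)$, Lemma~\ref{lem::3.2} gives $e(H)=\binom{t}{2}+2$. For $\beta=1$ I would first show that a connected graph on $t+2$ vertices with $\binom{t}{2}+2$ edges and the $(s,t)$-property can only occur when $t=8$: by Lemma~\ref{lem::2.15} its complement is $H^{\star}$ (whence $t=8$) or some $H_{a,b,c}$ with $a+b+c=t-1$, and reading the edge set of $H_{a,b,c}$ off Fig.~\ref{fig-1} one finds that if $a\ge1$ then contracting $u_{2}v$ with $v\in V_{a}$ creates a vertex joined to all remaining $t$ vertices (a $K_{1,t}$-minor), while if $a=0$ then contracting $u_{1}u_{2}$ turns $\overline{H_{a,b,c}}$ into $K_{b+1,\,c+1}$ with parts $\{[u_{1}u_{2}]\}\cup V_{c}$ and $\{w\}\cup V_{b}$, where $(b+1)+(c+1)=t+1$ and $\min(b+1,c+1)\le\lfloor\tfrac{t+1}{2}\rfloor=\gamma$ since $\beta=1$; either conclusion contradicts the $(s,t)$-property. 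Hence $e(H)\le\binom{t}{2}+1$ when $\beta=1$ and $t\ne 8$. In that subcase, $K_{t}\cup K_{2}$ (which has the $(s,t)$-property and $\binom{t}{2}+1$ edges) and Lemma~\ref{lem::3.2} give $e(H)=\binom{t}{2}+1$; since $\Delta(H)\le t-1$ and $H$ is connected, $\pi(H)\prec\pi(K_{t}\cup K_{2})=(\underbrace{t-1,\dots,t-1}_{t},1,1)$, so $\pi(H)=\pi(K_{t}\cup K_{2})$ by Lemma~\ref{lem::3.4}, the two degree-$1$ vertices must have distinct neighbours (else deleting them would force some vertex to have degree both $t-3$ and $\ge t-1$ in a $t$-vertex graph), and $H$ must be $K_{t}-e$ with pendants $x\sim a$, $y\sim b$ and $ab\notin E(H)$. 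Setting $G'=G^{*}-\{xa,yb\}+\{ab,xy\}$ (whose modified component is $K_{t}\cup K_{2}$, so $G'$ is $K_{s,t}$-minor free), I would compute
\[
\mathbf{X}^{T}\bigl(A_{\alpha}(G')-A_{\alpha}(G^{*})\bigr)\mathbf{X}=2(1-\alpha)(x_{a}-x_{y})(x_{b}-x_{x})>0,
\]
using $d_{H}(a)=d_{H}(b)=t-1>1=d_{H}(x)=d_{H}(y)$ together with (\ref{equ::8})--(\ref{equ::8''}) and Lemma~\ref{lem::3.1} to get $x_{a}>x_{y}$ and $x_{b}>x_{x}$; this contradicts the maximality of $\rho_{\alpha}$, so $H_{t+2}=\varnothing$ when $\beta=1$, $t\ne 8$.

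It then remains to handle $e(H)=\binom{t}{2}+2$, i.e. $\beta=2$ or ($\beta=1$, $t=8$). Here Lemma~\ref{lem::2.15} gives $\overline{H}\cong H^{\star}$ or $\overline{H}\cong H_{a,b,c}$, and $a=0$ is forced in the second case as above. If $\beta=1$ (so $t=8$, $\gamma=4$), every $\overline{H_{0,b,c}}$ has the forbidden $K_{b+1,c+1}$-minor, so $\overline{H}\cong H^{\star}$ and $H\cong\overline{H^{\star}}$. If $\beta=2$ (so $\gamma=s$), I would use Lemma~\ref{lem::3.4} with $H'=S^{1}(\overline{H_{s,t}})$ (which has the $(s,t)$-property and $\binom{t}{2}+2$ edges by Lemma~\ref{lem::2.16}(i), its complement being $\overline{H_{0,\,t-s-1,\,s}}$): since $\overline{H}\cong H^{\star}$ would give $\pi(H)=(t-2,\dots,t-2)\prec(\underbrace{t-1,\dots,t-1}_{t-1},t-s,s+1,2)=\pi(S^{1}(\overline{H_{s,t}}))$ strictly, this case is excluded; so $\overline{H}\cong H_{0,b,c}$, the minor argument forces $\min(b+1,c+1)\ge s+1$, hence $\max(b,c)+1=t+1-(\min(b,c)+1)\le t-s$, so $\pi(H)=(\underbrace{t-1,\dots,t-1}_{t-1},\max(b,c)+1,\min(b,c)+1,2)\prec\pi(S^{1}(\overline{H_{s,t}}))$ (strictly smaller $t$-th partial sum, equal afterwards), and Lemma~\ref{lem::3.4} forces equality, whence $\{b,c\}=\{t-s-1,s\}$ and $H\cong\overline{H_{0,\,t-s-1,\,s}}\cong S^{1}(\overline{H_{s,t}})$.

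The step I expect to be the main obstacle is the structural analysis of the graphs $\overline{H_{a,b,c}}$: one has to read their edge sets off Fig.~\ref{fig-1} and isolate exactly the two contraction arguments above, which simultaneously eliminate every $\overline{H_{a,b,c}}$ when $\beta=1$ and single out the unique one isomorphic to $S^{1}(\overline{H_{s,t}})$ when $\beta=2$. Once these are in place the majorization bookkeeping via Lemma~\ref{lem::3.4} is routine; the other point demanding care is the $\beta=1$, $t\ne 8$ subcase, where the edge count is no longer decisive and one must fall back on the double-eigenvector-style swap to discard $K_{t}-e$ with two pendant vertices.
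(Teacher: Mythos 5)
Your proposal is correct, and it uses the same toolkit as the paper (Lemmas \ref{lem::2.4}, \ref{lem::2.15}, \ref{lem::3.2}, \ref{lem::3.4}, contraction-minor arguments, and a Perron-vector edge switch), but the argument is organized along a genuinely different route. The paper first proves $e(H)=\binom{t}{2}+2$ uniformly for all $\beta$ (Claim \ref{clm::3.1}, where ruling out $e(H)=\binom{t}{2}+1$ already requires the switching argument), then deduces $\beta\le 2$ from this count, and only then splits into $\beta=2$ and $\beta=1$; in the $H_{a,b,c}$ analysis it contracts $u_2w$ to obtain $\min\{b,c\}\ge\gamma$ (inequality (\ref{equ::29})) and extracts $a=0$ afterwards from the majorization equality. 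You instead prove $\beta\le2$ first by comparing $H\cup 2K_t$ with $2\overline{H_{s,t}}\cup K_t$ (so you never need the exact value of $e(H)$ there, only $e(H)\le\binom{t}{2}+2$), you get $e(H)=\binom{t}{2}+2$ for $\beta=2$ cheaply by placing $S^{1}(\overline{H_{s,t}})$ on $V(H)$, you force $a=0$ directly by contracting $u_2v$ with $v\in V_a$ to produce a $K_{1,t}$-minor (a clean alternative to (\ref{equ::29})), and you confine the switching argument to the subcase $\beta=1$, $t\ne8$, where it plays exactly the role of the paper's Claim \ref{clm::3.1}; the net conclusions agree in every case, including the implicit fact that $H_{t+2}=\varnothing$ when $\beta=1$ and $t\ne8$. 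Two phrasings should be repaired but are not gaps: $S^{1}(\overline{H_{s,t}})$ is itself isomorphic to $\overline{H_{0,\,t-s-1,\,s}}$ (its complement is $H_{0,\,t-s-1,\,s}$, not $\overline{H_{0,\,t-s-1,\,s}}$), and in the final majorization step the $t$-th partial sum is only ``at most'' that of $\pi(S^{1}(\overline{H_{s,t}}))$, with Lemma \ref{lem::3.4} then forcing equality and hence $\{b,c\}=\{t-s-1,s\}$.
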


\begin{proof}
The proof is divided into several claims.
\begin{clm}\label{clm::3.1}
$
e(H)=\binom{t}{2}+2.
$
\end{clm}
\begin{proof}
Since $|H|=t+2$, by Lemma \ref{lem::2.4} we have $e(H) \leq\binom{t }{2}+2$. Clearly, $K_{t} \cup K_{2}$ has the $(s, t)$-property, then by Lemma \ref{lem::3.2} we obtain $e(H) \geq e\left(K_{t} \cup K_{2}\right)=\binom{t }{2}+1$. Suppose that  $e(H)=\binom{t }{2}+1$. Notice that $\Delta(H) \leq t-1$ and $\delta(H) \geq 1$. Then $\pi(H) \prec \pi\left(K_{t} \cup K_{2}\right)$. By Lemma \ref{lem::3.4}, we obtain
$\pi(H)=\pi\left(K_{t} \cup K_{2}\right)=(t-1, t-1, \ldots, t-1,1,1)$.
This implies that $H$ is obtained from $K_{t}$ by deleting an edge $u_{1} u_{2}$ and adding two pendant edges $u_{1} v_{1}$ and $u_{2} v_{2}$. Since $t \geq 4$, we have  $N_{H}\left(u_{1}\right) \backslash\left\{v_{1}\right\} \neq \varnothing$ and
$$
\left( \rho_{\alpha}-\alpha(s+1)+1\right) \left(x_{u_{1}}-x_{v_{1}}\right)=\alpha(t-2)x_{u_{1}}+(1-\alpha)\sum_{v \in N_{H}\left(u_{1}\right) \backslash\left\{v_{1}\right\}} x_{v}>0,
$$
thus $x_{u_{1}}>x_{v_{1}}$. By symmetry, we have $x_{u_{1}}=x_{u_{2}}$ and  $x_{v_{1}}=x_{v_{2}}$. Now, let $H^{\prime}= H-\left\{u_{1} v_{1}, u_{2} v_{2}\right\}+\left\{u_{1} u_{2}, v_{1} v_{2}\right\}$ and $G^{\prime}=G^{*}-E(H)+E\left(H^{\prime}\right)$. Clearly, $H^{\prime} \cong K_{t} \cup K_{2}$, thus $H^{\prime}$ has the $(s, t)$-property. However, we see that
$$
\rho_{\alpha}\left(G^{\prime}\right)-\rho_{\alpha} \geq X^{T}\left(A_{\alpha}\left(G^{\prime}\right)-A_{\alpha}\left(G^{*}\right)\right) X=2(1-\alpha)\left(x_{u_{1}}-x_{v_{2}}\right)\left(x_{u_{2}}-x_{v_{1}}\right)>0,
$$
a contradiction. Therefore, $e(H)=\binom{t}{2}+2$.
\end{proof}

\begin{clm}\label{clm::3.2}
$\beta \leq 2$.
\end{clm}
\begin{proof}
Suppose to the contrary that $\beta \geq 3$.
Then we have $e\left(\overline{H_{s, t}}\right)= \binom{t}{2}+\beta-1 \geq  \binom{t}{2}+2$ by Lemma \ref{lem::3.5}. From Lemma \ref{lem::3.7}, there exists a component which is isomorphic to $K_{t}$ in $G^{*}-K$.
Let $H^{\prime} \cong 2\overline{H_{s, t}}$ and $V\left(H^{\prime}\right)=V\left(H \cup K_{t}\right)$. By Lemma \ref{lem::2.16}(i), $H^{\prime}$ has the $(s, t)$-property. Then by Claim \ref{clm::3.1}, we have $e(H^{\prime})\geq  2\left( \binom{t}{2}+2\right) >2 \binom{t}{2}+2=e({H} \cup K_{t})$, which contradicts
Lemma \ref{lem::3.2}.
\end{proof}

\begin{clm}\label{clm::3.3}
If $\beta=2$, then $H\cong S^{1}\left(\overline{H_{s, t}}\right)$.
\end{clm}
\begin{proof}
Clearly, $|S^{1}\left(\overline{H_{s, t}}\right)|=t+2$. Moreover, Lemma \ref{lem::2.16}(i) gives that $S^{1}\left(\overline{H_{s, t}}\right)$ has the $(s, t)$-property.
Since $\beta=\left\lfloor\frac{t+1}{s+1}\right\rfloor=2$, we have $2s+1 \leq t \leq 3s+1$.
Thus $\gamma=\min \left\{s,\left\lfloor\frac{t+1}{2}\right\rfloor\right\}=s$.
Since $|H|=t+2$, then together with  Claims \ref{clm::3.1}, \ref{clm::3.2} and Lemma \ref{lem::2.15}, we find that $H$ is isomorphic to either $\overline{H^{\star}}$ or some $\overline{H_{a, b, c}}$, where $a+b+c=t-1$.
If $H \cong \overline{H^{\star}}$. Then $|H|=t+2=10$ and $H$ is $6$-regular. Thus, we obtain $t=8$ and $e(H)=30$. Moreover, since $S^{1}\left(\overline{H_{s, 8}}\right)$ is a subdivision of $\overline{H_{s, 8}}$, by Lemma \ref{lem::3.5} we have $e\left(S^{1}\left(\overline{H_{s, 8}}\right)\right)=e\left(\overline{H_{s, 8}}\right)+1
=\binom{t}{2}+\beta=30$. From Lemma \ref{lem::2.16}(i), we get that
$$
\pi\left(S^{1}\left(\overline{H_{s, t}}\right)\right)=(t-1, \ldots, t-1, t-s, s+1,2).
$$
Since $t=8$ and $s\geq 2$, we have $t-s \leq 6$. Hence, $\pi(H) \prec \pi\left(S^{1}\left(\overline{H_{s, 8}}\right)\right)$, which contradicts Lemma \ref{lem::3.4}. Therefore, $H$ is isomorphic to some $\overline{H_{a, b, c}}$. Now, we assert that
\begin{equation}\label{equ::29}
\min \{b, c\} \geq \gamma.
\end{equation}
If $b \leq \gamma-1$, we contract the edge $u_{2} w$ in $\overline{H_{a, b, c}}$ and call the new vertex $u$ in the resulting graph, then we get a complete bipartite subgraph with bipartite partition
$$\left(V(K_{b}) \cup\{u\}, V(K_{a})\cup V(K_{c}) \cup\{u_{1}\}\right).$$
This implies that $H$ contains a $K_{b+1, a+c+1}$-minor, contradicting the $(s, t)$-property. So, $b \geq \gamma$. By symmetry, we have $c \geq \gamma$.

Furthermore, by Fig. \ref{fig-1} we can see that
$$
\pi\left(\overline{H_{a, b, c}}\right)=\left(t-1, \ldots, t-1, a_{1}, a_{2}, a_{3}\right),
$$
where $a_{1}, a_{2}, a_{3} \in\{a+2, b+1, c+1\}$. By (\ref{equ::29})  and $\gamma=s$, we have $\min \{b, c\} \geq \gamma=s$. Thus, we find that $a_{3} \geq 2$ and $a_{2} \geq s+1$. Hence, we see that $\pi\left(\overline{H_{a, b, c}}\right) \prec \pi\left(S^{1}\left(\overline{H_{s, t}}\right)\right)$. Then we obtain $\pi\left(\overline{H_{a, b, c}}\right)=\pi\left(S^{1}\left(\overline{H_{s, t}}\right)\right)$ by Lemma  \ref{lem::3.4},. This implies that $a_{3}=2$ and $a_{2}=s+1$. Note that $\min \{b, c\} \geq s \geq 2$. It follows that $a=0$ and $\min \{b, c\}=s$, that is, $\overline{H_{a, b, c}} \cong S^{1}\left(\overline{H_{s, t}}\right)$.
\end{proof}

\begin{clm}\label{clm::3.4}
If $\beta=1$, then $H \cong \overline{H^{\star}}$.
\end{clm}

\begin{proof}
Since $\beta=\left\lfloor\frac{t+1}{s+1}\right\rfloor=1$, we have $t \leq 2s$. Thus, $\gamma=\min \left\{s,\left\lfloor\frac{t+1}{2}\right\rfloor\right\}=\left\lfloor\frac{t+1}{2}\right\rfloor$. Since $|H|=t+2$, then together with  Claims \ref{clm::3.1}, \ref{clm::3.2} and Lemma \ref{lem::2.15}, $H$ is isomorphic to either $\overline{H^{\star}}$ or some $\overline{H_{a, b, c}}$, where $a+b+c=t-1$. If $H$ is isomorphic to some $\overline{H_{a, b, c}}$, then we get (\ref{equ::29}) in a similar way as above. It follows that $t-1\geq b+c \geq 2 \gamma=2\left\lfloor\frac{t+1}{2}\right\rfloor$, a contradiction.  Hence, $H$ is only possibly isomorphic to $\overline{H^{\star}}$. Since $|\overline{H^{\star}}|=t+2=10$, we have $t=8$. Then by Lemma \ref{lem::2.16}(ii), $\overline{H^{\star}}$ has the $(s, t)$-property. Therefore, $H \cong \overline{H^{\star}}$, as desired.
\end{proof}
Combining Claims \ref{clm::3.2}-\ref{clm::3.4}, we obtain the result of Lemma \ref{lem::3.8}.
\end{proof}

\begin{lem}\label{lem::3.13}
If $H_{t+1} \neq \varnothing$. Then $H_{t+2} \cup H_{<t}=\varnothing$.
\end{lem}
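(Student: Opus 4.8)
The plan is to argue by contradiction: suppose $H_{t+1}\neq\varnothing$ but $H_{t+2}\cup H_{<t}\neq\varnothing$. Fix a component $H_{0}\in H_{t+1}$; by Lemma~\ref{lem::3.5}, $H_{0}\cong\overline{H_{s,t}}$, $e(H_{0})=\binom{t}{2}+\beta-1$ and $\beta\geq2$. Also fix a component $D\in H_{t+2}\cup H_{<t}$ (there is at most one, by Lemma~\ref{lem::3.10}). The argument relies entirely on the two structural properties already proved, local edge maximality (Lemma~\ref{lem::3.2}) and local degree-sequence majorization (Lemma~\ref{lem::3.4}): in each case I will combine $D$ with $H_{0}$ --- and, when that is not enough, with one further component $T\cong K_{t}$ of $G^{*}-K$ (such a $T$ exists since $|G^{*}-K|=n-s+1$ is large while, by Lemmas~\ref{lem::3.6}, \ref{lem::3.9}, \ref{lem::3.10} and \ref{lem::3.12}, only boundedly many components of $G^{*}-K$ have order $\neq t$) --- into a graph $H$ that is a union of components of $G^{*}-K$ with $|H|$ bounded, and then exhibit a graph $H'$ with $V(H')=V(H)$, having the $(s,t)$-property, for which either $e(H')>e(H)$ (contradicting Lemma~\ref{lem::3.2}) or $e(H')=e(H)$ with $\pi(H)\prec\pi(H')$ and $\pi(H)\neq\pi(H')$ (contradicting Lemma~\ref{lem::3.4}). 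I will use repeatedly that every graph on at most $t$ vertices, in particular each $K_{m}$ with $m\leq t$, has the $(s,t)$-property (a $K_{a,b}$ with $a+b=t+1$ has more than $t$ vertices), and that a disjoint union of graphs with the $(s,t)$-property has it.

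First I would treat $D\in H_{t+2}$. Since $\beta\geq2$, Lemma~\ref{lem::3.8} forces $\beta=2$ and $D\cong S^{1}(\overline{H_{s,t}})$, so $e(D)=e(\overline{H_{s,t}})+1=\binom{t}{2}+2$, and by Lemma~\ref{lem::2.16}(i) $\pi(D)=((t-1)^{t-1},t-s,s+1,2)$, while $\pi(\overline{H_{s,t}})=((t-1)^{t-1},t-s,s+1)$ (undo the subdivision). Take $H=D\cup H_{0}$ and $H'=2K_{t}\cup K_{3}$: both have $2t+3$ vertices and $2\binom{t}{2}+3$ edges, and $H'$ has the $(s,t)$-property since $t\geq4$. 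It then remains to check $\pi(H)=((t-1)^{2t-2},(t-s)^{2},(s+1)^{2},2)\prec((t-1)^{2t},2^{3})=\pi(H')$, a short comparison of partial sums: using $s\geq2$ and $2s+1\leq t\leq3s+1$ (the latter from $\beta=2$), the first $2t-2$ partial sums of the two sequences agree, those of indices $2t-1,2t,2t+1$ are strictly smaller for $\pi(H)$, and the last two agree, so $\pi(H)\prec\pi(H')$ with $\pi(H)\neq\pi(H')$, contradicting Lemma~\ref{lem::3.4}.

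Next I would treat $D\in H_{<t}$; put $d=|D|$, so $1\leq d\leq t-1$ and $e(D)\leq\binom{d}{2}$. If $d\geq\beta$, take $H=D\cup H_{0}$ and $H'=K_{t}\cup K_{d+1}$ (which has the $(s,t)$-property as $d+1\leq t$); then $e(H')=\binom{t}{2}+\binom{d+1}{2}=\binom{t}{2}+\binom{d}{2}+d>\binom{t}{2}+\binom{d}{2}+\beta-1\geq e(H)$, contradicting Lemma~\ref{lem::3.2}. If $d\leq\beta-1$, bring in a component $T\cong K_{t}$ of $G^{*}-K$ and take $H=D\cup H_{0}\cup T$ and $H'=2\overline{H_{s,t}}\cup K_{d-1}$; both have $d+2t+1$ vertices, $H'$ has the $(s,t)$-property by Lemma~\ref{lem::2.16}(i) (and $d-1<t$), and $e(H')-e(H)\geq(\beta-1)+\binom{d-1}{2}-\binom{d}{2}=(\beta-1)-(d-1)=\beta-d\geq1$, again contradicting Lemma~\ref{lem::3.2}. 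This exhausts all cases.

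The step I expect to be the genuine obstacle is the sub-case $D\in H_{<t}$ with $d\leq\beta-1$ (which really can occur, e.g. with $D$ a single vertex): on $d+t+1$ vertices the graph $D\cup H_{0}$ already carries $\binom{t}{2}+\beta-1+\binom{d}{2}$ edges, a number that no $(s,t)$-property graph on that many vertices can exceed, so one is forced to also absorb a $K_{t}$-component and redistribute the $d+2t+1$ vertices as $2\overline{H_{s,t}}\cup K_{d-1}$, whose edge count beats that of $K_{d}\cup\overline{H_{s,t}}\cup K_{t}$ by exactly $\beta-d$ --- morally because each $\overline{H_{s,t}}$ beats $K_{t}$ by $\beta-1$ edges whereas $K_{d}$ beats $K_{d-1}$ by only $d-1$. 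Making this inequality and the choice of $H'$ precise is the conceptual core; the remaining work, including the partial-sum bookkeeping in the $H_{t+2}$ case, is routine.
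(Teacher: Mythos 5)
Your proof is correct and follows essentially the same strategy as the paper: the $|D|=t+2$ case (comparing $S^{1}(\overline{H_{s,t}})\cup\overline{H_{s,t}}$ with $2K_t\cup K_3$ via Lemma \ref{lem::3.4}) and the exclusion of $|D|\geq\beta$ (comparing with $K_t\cup K_{|D|+1}$ via Lemma \ref{lem::3.2}) are exactly the paper's arguments. The only difference is the remaining sub-case $|D|\leq\beta-1$, where the paper absorbs $|D|$ copies of $K_t$ and compares with $|D|\,\overline{H_{s,t}}$ to force $|D|\geq2\beta-1$, while you absorb a single $K_t$ and compare with $2\overline{H_{s,t}}\cup K_{|D|-1}$ — a minor variant of the same $K_t$-for-$\overline{H_{s,t}}$ exchange, and your edge count $\beta-|D|\geq1$ checks out.
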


\begin{proof}
Suppose to the contrary that $H_{t+2} \cup H_{<t} \neq \varnothing$. By Lemma \ref{lem::3.10}, $G^{*}-K$ contains a unique component $D_{1} \in H_{t+2} \cup H_{<t}$. Since $H_{t+1} \neq \varnothing$, we take a component $D_{2} \in H_{t+1}$. Then by Lemma \ref{lem::3.5}, we have $\beta \geq 2$ and $D_{2} \cong \overline{H_{s, t}}$.
	
If $|D_{1}|=t+2$, then $\beta=2$ and $D_{1} \cong S^{1}\left(\overline{H_{s, t}}\right)$ by Lemma \ref{lem::3.8}. Note that $D_{1}$ is a subdivision of $\overline{H_{s, t}}$. From Lemma \ref{lem::3.5}, we obtain $e\left(\overline{H_{s, t}}\right)=\binom{t}{2}+\beta-1$. Thus, we have
$$
e\left(D_{1} \cup D_{2}\right)=2 e\left(\overline{H_{s, t}}\right)+1=2\left(\binom{t}{2}+\beta-1\right)+1=2\binom{t}{2}+3.
$$
Now let $H^{\prime}=2 K_{t} \cup K_{3}$. Then $|H^{\prime}|=|D_{1} \cup D_{2}|$ and $e\left(H^{\prime}\right)=e\left(D_{1} \cup D_{2}\right)$. By (\ref{equ::30}), we have $\delta\left(D_{2}\right)>2$. Since $D_{1}$ is a subdivision of $D_{2}$, $\delta\left(D_{1}\right)=2$ and its vertex of degree two is unique. Which implies that $\pi\left(D_{1} \cup D_{2}\right) \prec \pi\left(H^{\prime}\right)$ and  $\pi\left(D_{1} \cup D_{2}\right) \neq \pi\left(H^{\prime}\right)$, this contradicts Lemma \ref{lem::3.4}.
	
If $|D_{1}|<t$, then $D_{1} \cong K_{|D_{1}|}$. Indeed, we have
\begin{equation}\label{equ::31}
|D_{1}| \leq \beta-1.
\end{equation}
Otherwise, we have $|D_{1}|>\beta-1$, then
$$
e\left(D_{1} \cup D_{2}\right)=\binom{\left|D_{1}\right| }{2}+\binom{t}{2}+\beta-1<\binom{t }{2}+\binom{\left|D_{1}\right| }{2}+\left|D_{1}\right|=e\left(K_{t} \cup K_{\left|D_{1}\right|+1}\right),
$$
which contradicts Lemma \ref{lem::3.2}. On the other hand,
we get that $G^{*}-K$ contains the disjoint union of a copy of $D_{1}$ and $|D_{1}|$ copies of $K_{t}$ by Lemma \ref{lem::3.7}. We denote it by $H^{\prime\prime}$. Then $e(H^{\prime\prime})=  \binom{\left|D_{1}\right| }{2} +\left|D_{1}\right|\binom{t}{2}$. Now let $H^{\prime\prime\prime}$ be the disjoint union of $|D_{1}|$ copies of $D_{2}$. Clearly, $|H^{\prime\prime}|=|H^{\prime\prime\prime}|=|D_{1}|(t+1)$ and $e\left(H^{\prime\prime\prime}\right)= |D_{1}|\left(\binom{t}{2}+\beta-1\right)$. By Lemma \ref{lem::3.2}, $e\left(H^{\prime\prime\prime}\right) \leq e(H^{\prime\prime})$.
Therefore, we get that
\begin{equation}\label{equ::32}
\begin{array}{ll}
|D_{1}|\geq 2\beta-1,
\end{array}
\end{equation}
which contradicts (\ref{equ::31}).
\end{proof}

\begin{lem}\label{lem::3.11}
Let $t\geq4$, $\beta=1$, $n-s+1=pt+r$ and $1\leq r\leq t$. Then
$$
G^{*}-K \cong\left\{\begin{array}{ll}
(p-1) K_{t} \cup \overline{H^{\star}} & \text { for } r=2 \text { and } t=8; \\
p K_{t} \cup K_{r} & \text { otherwise. }
\end{array}\right.
$$
\end{lem}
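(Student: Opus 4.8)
The plan is to determine all possible component orders of $G^*-K$, show each component is edge-maximal (hence complete, or $\overline{H^\star}$), and finish with a single edge count that forces the exceptional graph. First I would record that $\beta=1$ rules out order-$(t+1)$ components: by Lemma~\ref{lem::3.5} any component in $H_{t+1}$ would require $\beta\ge 2$, so $H_{t+1}=\varnothing$. Together with $H_{t+3}=\varnothing$ (Lemma~\ref{lem::3.9}) and $H_{>t+3}=\varnothing$ (Lemma~\ref{lem::3.6}), every component of $G^*-K$ has order in $\{1,2,\dots,t\}\cup\{t+2\}$, and by Lemma~\ref{lem::3.10} at most one of them has order in $\{1,\dots,t-1\}\cup\{t+2\}$.

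Next I would note that a component $D$ of $G^*-K$ of order exactly $t$ must be $K_t$: $D$ inherits the $(s,t)$-property from $G^*-K$, and $K_t$ also has the $(s,t)$-property (a minor of $K_t$ has at most $t$ vertices, while $K_{a,b}$ with $a+b=t+1$ has $t+1$), so Lemma~\ref{lem::3.2} applied with $H=D$ and $H'=K_t$ on $V(D)$ gives $\binom{t}{2}=e(K_t)\le e(D)\le\binom{t}{2}$, i.e. $D\cong K_t$; the same argument makes a component of order $r'<t$ equal to $K_{r'}$. Now I split on whether $G^*-K$ has a component of order $t+2$. If not, then $G^*-K\cong pK_t\cup K_r$ for the appropriate $p,r$ with $1\le r\le t$ (in the case where every component is $K_t$, the convention $n-s+1=pt+r$ with $1\le r\le t$ forces $r=t$, hence $pK_t\cup K_t$). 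If so, let $D$ be the order-$(t+2)$ component; then Lemma~\ref{lem::3.8} with $\beta=1$ gives $D\cong\overline{H^\star}$, which forces $t=8$ because $|\overline{H^\star}|=t+2$, and since $D$ is the unique exceptional component permitted by Lemma~\ref{lem::3.10}, all other components are copies of $K_t$; writing $n-s+1=8m+10=8(m+1)+2$ yields $r=2$, $p=m+1$ and $G^*-K\cong(p-1)K_t\cup\overline{H^\star}$.

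It remains to break the tie when $r=2$ and $t=8$: here the first case would give $G^*-K\cong pK_8\cup K_2$, but then taking $H$ to be the union of one $K_8$-component and the $K_2$-component (two components, $|H|=10$) and $H'\cong\overline{H^\star}$ on $V(H)$ --- which has the $(s,t)$-property for $t=8$ by Lemma~\ref{lem::2.16}(ii) --- we get $e(H')=30>29=e(H)$, contradicting Lemma~\ref{lem::3.2}; hence when $r=2,t=8$ only the second case survives, and for every other pair $(r,t)$ only the first case can occur (the order-$(t+2)$ possibility forces $t=8$, $r=2$). This is exactly the stated dichotomy. I do not expect any single step to be genuinely hard; the main thing to be careful about is the bookkeeping relating component orders to the division $n-s+1=pt+r$ (especially the boundary convention $r=t$), and not forgetting the final $\overline{H^\star}$-versus-$K_8\cup K_2$ edge comparison, which is what actually selects the exceptional extremal graph.
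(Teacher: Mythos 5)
Your proposal is correct and follows essentially the same route as the paper: exclude all component orders outside $\{1,\dots,t\}\cup\{t+2\}$ via Lemmas \ref{lem::3.5}, \ref{lem::3.6}, \ref{lem::3.9}, invoke Lemma \ref{lem::3.10} for uniqueness of the exceptional component and Lemma \ref{lem::3.8} to identify an order-$(t+2)$ component with $\overline{H^{\star}}$ (forcing $t=8$, $r=2$), and settle the $r=2$, $t=8$ case by the edge comparison $e(K_8\cup K_2)=29<30=e(\overline{H^{\star}})$ against Lemma \ref{lem::3.2}. The only differences are presentational (you make the edge-maximality argument for the $K_t$ and $K_{r}$ components and the divisibility bookkeeping explicit, which the paper leaves implicit).
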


\begin{proof}
By Lemmas \ref{lem::3.6} and \ref{lem::3.9}, $H_{>t+2}=\varnothing$. Note that $\beta=1$. Then by Lemma \ref{lem::3.5}, we also have $H_{t+1}=\varnothing$. Since any component of $G^{*}- K$ of order $t$ is isomorphic to $K_{t}$, then by Lemma \ref{lem::3.10}, there exists at most one component $H$ of $G^{*}-K$ are not isomorphic to $K_{t}$. Notice that  $|G^{*}-K|=pt+r$, where $1 \leq r \leq t$, we see that either $|H|=r$ or $|H|=t+r=t+2$. If $r \neq 2$, then $H \cong K_{r}$ and $G^{*}-K \cong p K_{t} \cup K_{r}$, as desired. Now assume that $r=2$. Then either $H \cong K_{2}$ or $H \cong \overline{H^{\star}}$ by Lemma \ref{lem::3.8}. If $H \cong \overline{H^{\star}}$, then $|H|=t+2=10$, and so $t=8$. Hence, if $t \neq 8$, then $G^{*}-K \cong p K_{t} \cup K_{2}$. Now we consider the case $t=8$. Suppose that $H \cong K_{2}$, then $e\left(K_{8} \cup K_{2}\right)=29<30=e\left(\overline{H^{\star}}\right)$, contradicting Lemma \ref{lem::3.2}. That is, $H\cong \overline{H^{\star}}$.
\end{proof}

\begin{lem}\label{lem::3.14}
Let $t\geq4$, $\beta \geq 2$, $n-s+1=pt+r$ and $1\leq r\leq t$. Then
$$
G^{*}-K \cong\left\{\begin{array}{ll}
(p-1) K_{t} \cup S^{1}\left(\overline{H_{s, t}}\right) & \text { for } r=\beta=2; \\
(p-r) K_{t} \cup r \overline{H_{s, t}} & \text { for } r \leq 2(\beta-1) \text { except } r=\beta=2; \\
p K_{t} \cup K_{r} & \text { for } r>2(\beta-1).
\end{array}\right.
$$
\end{lem}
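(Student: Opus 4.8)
The plan is to pin down the components of $G^{*}-K$ and then, for each range of $r$, eliminate every candidate but the claimed one using local edge maximality (Lemma \ref{lem::3.2}) and local degree-sequence majorization (Lemma \ref{lem::3.4}). First I would collect the structural information already available: by Lemmas \ref{lem::3.6} and \ref{lem::3.9}, $H_{>t+2}=\varnothing$; every component of order $m\le t$ is a $K_{m}$ (for $m\le t$ the complete graph $K_{m}$ has the $(s,t)$-property and is the densest graph on $m$ vertices, so Lemma \ref{lem::3.2} forces equality); every component in $H_{t+1}$ is a copy of $\overline{H_{s,t}}$ with $e(\overline{H_{s,t}})=\binom{t}{2}+\beta-1$, and there are at most $2(\beta-1)$ of them (Lemmas \ref{lem::3.5} and \ref{lem::3.12}); a component in $H_{t+2}$ forces $\beta=2$, is a copy of $S^{1}(\overline{H_{s,t}})$, and satisfies $e(S^{1}(\overline{H_{s,t}}))=\binom{t}{2}+2$ (Lemma \ref{lem::3.8}); finally, at most one component lies in $H_{<t}\cup H_{t+2}$ (Lemma \ref{lem::3.10}). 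I then split into two cases according to whether $H_{t+1}=\varnothing$.

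Case A: $H_{t+1}\ne\varnothing$. Then Lemma \ref{lem::3.13} gives $H_{t+2}\cup H_{<t}=\varnothing$, so $G^{*}-K\cong aK_{t}\cup b\,\overline{H_{s,t}}$ with $1\le b\le 2(\beta-1)$. Since $2(\beta-1)<t$, writing $|G^{*}-K|=(a+b)t+b=pt+r$ with $1\le r\le t$ forces $b=r$ and $a=p-r$; in particular $1\le r\le 2(\beta-1)$. If $(r,\beta)\ne(2,2)$ this is exactly the graph in the second line. If $r=\beta=2$, I would derive a contradiction: put $H=2K_{t}\cup 2\overline{H_{s,t}}$ and $H'=3K_{t}\cup S^{1}(\overline{H_{s,t}})$; both have $4t+2$ vertices and $4\binom{t}{2}+2$ edges, $H'$ has the $(s,t)$-property (Lemma \ref{lem::2.16}(i)), and using $\pi(\overline{H_{s,t}})=(t-1,\dots,t-1,t-s,s+1)$ (with $t-1$ copies of $t-1$) together with $\pi(S^{1}(\overline{H_{s,t}}))=(t-1,\dots,t-1,t-s,s+1,2)$ one checks $\pi(H)\prec\pi(H')$ and $\pi(H)\ne\pi(H')$, contradicting Lemma \ref{lem::3.4}. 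Hence in Case A we must have $r\le 2(\beta-1)$, $(r,\beta)\ne(2,2)$, and $G^{*}-K\cong (p-r)K_{t}\cup r\,\overline{H_{s,t}}$.

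Case B: $H_{t+1}=\varnothing$. Then $G^{*}-K$ consists of copies of $K_{t}$ together with at most one further component $D$, with $D\in H_{t+2}$, $D\in H_{<t}$, or no $D$ at all. If $D\in H_{t+2}$, then $\beta=2$ and $D\cong S^{1}(\overline{H_{s,t}})$, and counting $|G^{*}-K|$ modulo $t$ forces $r=2$, giving $G^{*}-K\cong (p-1)K_{t}\cup S^{1}(\overline{H_{s,t}})$, the graph in the first line. Otherwise $D\cong K_{r}$ with $r<t$, or $r=t$ and there is no $D$; in both cases $G^{*}-K\cong pK_{t}\cup K_{r}$. It remains to show this last possibility is excluded whenever $r\le 2(\beta-1)$: taking $H=rK_{t}\cup K_{r}$ and $H'=r\,\overline{H_{s,t}}$ (which has $V(H')=V(H)$ and the $(s,t)$-property), one computes
\[
e(H')-e(H)=r(\beta-1)-\binom{r}{2}=\tfrac{r}{2}\bigl(2\beta-1-r\bigr)>0,
\]
since $r\le 2(\beta-1)<2\beta-1$, contradicting Lemma \ref{lem::3.2}; (for $\beta=2$, $r=2$ one may instead use $H=K_{t}\cup K_{2}$, $H'=S^{1}(\overline{H_{s,t}})$, again with $e(H')>e(H)$). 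Putting the two cases together yields the three stated forms of $G^{*}-K$. The step I expect to be the main obstacle is the borderline case $r=\beta=2$: the three a priori candidates $pK_{t}\cup K_{2}$, $(p-2)K_{t}\cup 2\overline{H_{s,t}}$ and $(p-1)K_{t}\cup S^{1}(\overline{H_{s,t}})$ have comparable edge counts (the latter two are equal), so they cannot be separated by counting edges alone and must be distinguished through the degree-sequence majorization of Lemma \ref{lem::3.4}.
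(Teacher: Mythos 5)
Your proposal is correct and follows essentially the same route as the paper: it rests on the same structural lemmas (Lemmas \ref{lem::3.5}--\ref{lem::3.13}), the same edge-count comparison of $rK_{t}\cup K_{r}$ against $r\,\overline{H_{s,t}}$ to exclude $pK_{t}\cup K_{r}$ when $r\le 2(\beta-1)$ (this is exactly how the paper obtains (\ref{equ::32})), and the same degree-sequence majorization against $K_{t}\cup S^{1}\left(\overline{H_{s, t}}\right)$ to settle $r=\beta=2$ (your version merely pads both sides with copies of $K_{t}$). The only difference is cosmetic: you split on whether $H_{t+1}=\varnothing$, while the paper splits on $r\neq 2$ versus $r=2$.
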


\begin{proof}
Note that $|G^{*}-K|=pt+r$, where $1 \leq r \leq t$, and $H \cong \overline{H_{s, t}}$ for every $H \in H_{t+1}$.
Now, we assert that if $r \leq 2(\beta-1)$ then $H_{<t}=\varnothing$. Indeed, if $G^{*}-K$ contains a component $D$ with $|D|<t$, then $H_{>t}=\varnothing$ and there exists exactly one component in $H_{<t}$ by Lemmas \ref{lem::3.10} and \ref{lem::3.13}. This implies that $|D|=r$.
From (\ref{equ::32}),  $r=|D| \geq 2\beta-1$, a contradiction.

We firstly assume that $r\neq 2$. Then by Lemmas \ref{lem::3.10} and \ref{lem::3.13}, $G^{*}-K$ is isomorphic to either $p K_{t} \cup K_{r}$ or  $(p-r) K_{t} \cup r \overline{H_{s, t}}$. If $r \leq 2(\beta-1)$, then $r<t$, and  $H_{<t}=\varnothing$. Thus $G^{*}-K \cong(p-r) K_{t} \cup r \overline{H_{s, t}}$. If $r>2(\beta-1)$, then by Lemma  \ref{lem::3.12}, $G^{*}-K \cong pK_{t} \cup K_{r}$.

Next, we assume that $r=2$. Since $\beta \geq 2$, we have $r\leq 2(\beta-1)$ and thus  $H_{<t}=\varnothing$. If $\beta>2$, then $H_{t+2}=\varnothing$ by Lemma \ref{lem::3.8}. Thus, $G^{*}-K \cong(p-r) K_{t} \cup r\overline{H_{s, t}}$. It remains to prove the case $r=\beta=2$. Now if $H_{t+1} \neq \varnothing$, then $H_{t+2} \cup H_{<t}=\varnothing$ by Lemma \ref{lem::3.13}. Moreover, by Lemma \ref{lem::3.12}, the number of the components in $H_{t+1}$ is at most $2(\beta-1)=2$. This implies that  $G^{*}-K \cong(p-2) K_{t} \cup 2\overline{H_{s, t}}$. Notice that $e\left(\overline{H_{s, t}}\right)=\binom{t}{2}+\beta-1=\binom{t}{2}+1$. Now let $H^{\prime} \cong K_{t} \cup S^{1}\left(\overline{H_{s, t}}\right)$. Then  $|H^{\prime}|=2|\overline{H_{s, t}}|=2t+2$ and
$
e\left(H^{\prime}\right)=e\left(K_{t}\right)+e\left(\overline{H_{s, t}}\right)+1=2 e\left(\overline{H_{s, t}}\right).
$
Since $S^{1}\left(\overline{H_{s, t}}\right)$ is a subdivision of $\overline{H_{s, t}}$, and by (\ref{equ::30}) we have $\delta\left(\overline{H_{s, t}}\right)>2$, we can easily see that $\pi\left(2\overline{H_{s, t}}\right) \prec \pi\left(H^{\prime}\right)$, this contradicts Lemma \ref{lem::3.4}. Thus, $H_{t+1}=\varnothing$. It follows that there exists exactly one component in $H_{t+2}$. By Lemma \ref{lem::3.8}, we have $G^{*}-K \cong(p-1) K_{t} \cup S^{1}\left(\overline{H_{s, t}}\right)$.
\end{proof}

Combining Lemmas \ref{lem::3.1'}-\ref{lem::3.1'''}, \ref{lem::3.11} and \ref{lem::3.14}, we obtain the result of Theorem \ref{thm::1.1}.

\end{document}